\newtheorem{thm}{Theorem}[section]
\newtheorem{cor}[thm]{Corollary}
\newtheorem{lem}[thm]{Lemma}
\newtheorem{prop}[thm]{Proposition}
\theoremstyle{definition}
\newtheorem{defn}[thm]{Definition}
\theoremstyle{remark}
\newtheorem{rem}[thm]{Remark}
\numberwithin{equation}{section}
\newcommand{\set}[1]{\left\{#1\right\}}
\newcommand{\Real}{\mathbb R}
\newcommand{\func}[1]{\ensuremath{\mathrm{#1} \:} }
\newcommand{\Hess}[0]{\func{Hess}}
\newcommand{\Div}[0]{\func{div}}
\newcommand{\pRect}[4]{S_{{#1}, {#2}}^{{#3}, {#4}}}
\newcommand{\dist}[0]{\mathrm{dist}}
\newcommand{\re}[0]{\func{Re}}
\newcommand{\im}[0]{\func{Im}}
\newcommand{\osc}[0]{\mathop{\mathrm{osc}}}
\title{Helicoid-Like Minimal Disks and Uniqueness}
\author{Jacob Bernstein and Christine Breiner}
\address{Dept. of Math, Stanford University, Stanford, CA  94305, USA}
\email{jbern@math.stanford.edu}
\address{Dept. of Math,
MIT, Cambridge, MA 02139, USA}
\email{breiner@math.mit.edu}
\thanks{The first author was partially supported by NSF grant DMS 0606629.}
\begin{document}
\begin{abstract}
 We show that for an embedded minimal disk in $\Real^3$, near points of large curvature the surface is bi-Lipschitz with a piece of a helicoid. Additionally, a simplified proof of the uniqueness of the helicoid is provided.
 \end{abstract}
\maketitle
\section{Introduction}
This paper gives a condition for an embedded minimal disk to look like a piece of a helicoid.  Namely, if such a disk has
boundary in the boundary of a ball and has large curvature, then, in
a smaller ball, it is bi-Lipschitz to a piece of a helicoid.
Moreover, the Lipschitz constant can be chosen as close to 1 as
desired (compare with
 Proposition 2 of \cite{M1}).
\begin{thm}
\label{FifthMainThm} Given $\epsilon,R>0$ there exists $R'\geq R$
so: Suppose $0\in \Sigma' \in \mathcal{E}(1,0, R's)$, and $(0,s)$ is
a blow-up pair (see section
 \ref{sheetssection}).  Then there
exists $\Omega$, a subset of a helicoid, so that $\Sigma$, the
component of $\Sigma'\cap B_{Rs}$ containing 0, is bi-Lipschitz with
$\Omega$, and the Lipschitz constant is in
$(1-\epsilon,1+\epsilon)$.
\end{thm}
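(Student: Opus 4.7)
The plan is to exploit the two-sheeted structure near a blow-up pair, as developed earlier in the paper, and compare $\Sigma$ sheet by sheet with a helicoid of matching pitch. First, I would invoke the structure theorem of Section \ref{sheetssection}: at the blow-up pair $(0,s)$, after a suitable rotation, $\Sigma$ decomposes as two multi-valued graphs $u_+,u_-$ over an annular region in a horizontal plane, joined across a thin neighborhood of a nearly vertical axis curve. The refined Colding--Minicozzi curvature and gradient estimates, applied at scale $R's$, show that by making $R'/R$ sufficiently large one can force $|\nabla u_\pm|$ and the sheet separation $w=u_+-u_-$ to be as close as one wishes to their helicoidal counterparts, and the unit normal along $\Sigma$ to be nearly horizontal.

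Second, I would fix a model helicoid $\Omega$ with pitch $\alpha$ chosen to match the asymptotic sheet separation of $\Sigma$, and with axis positioned along the axis curve of $\Sigma$. Parametrizing $\Omega$ by $(\rho,\theta)\mapsto(\rho\cos\theta,\rho\sin\theta,\alpha\theta)$, the content of the quantitative estimates is exactly that
\[
\left\| u_\pm(\rho,\theta)-\alpha\theta \right\|_{C^1}
\]
on the annular region can be made smaller than any prescribed $\eta(\epsilon)$ by taking $R'$ large.

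Third, I would define a bi-Lipschitz map $F\colon\Omega\to\Sigma$ on the annular piece by
\[
F\bigl(\rho\cos\theta,\rho\sin\theta,\alpha\theta\bigr) = \bigl(\rho\cos\theta,\rho\sin\theta,u_\pm(\rho,\theta)\bigr),
\]
matching sheet to sheet. Computed in the $(\rho,\theta)$ parametrization, $dF$ equals the identity plus a matrix of size $O(\|u_\pm-\alpha\theta\|_{C^1})$, so its singular values lie in $(1-\epsilon,1+\epsilon)$ provided $\eta(\epsilon)$ was small enough. Near the axis one interpolates $F$ across a small core using a radial cutoff, using that both $\Sigma$ and $\Omega$ contain a smooth embedded curve $C^1$-close to a vertical line. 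Checking that this interpolation introduces only $\epsilon$-level distortion, and that it patches correctly with the annular definition on each sheet, then gives the desired global bi-Lipschitz equivalence.

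The main obstacle I expect is the axis region. The multi-valued graph description degenerates as $\rho\to 0$, so the matching of sheets across the axis in $\Sigma$ must be shown to agree topologically and metrically with that of the model helicoid, and the axial curve of $\Sigma$ must be shown to be uniformly $C^1$-close to the axis of $\Omega$. Once the axis region is handled, the remainder is a quantitative refinement of the closeness-to-helicoid estimates already furnished by the earlier sections, together with a careful choice of $R'=R'(R,\epsilon)$ absorbing all error terms into a small multiple of $\epsilon$.
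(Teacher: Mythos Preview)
Your approach is a direct, constructive one: match sheets of $\Sigma$ with sheets of a model helicoid and interpolate across the axis. The paper's proof is completely different --- it is a soft compactness argument. One rescales to $s=1$, assumes no $R'$ works, extracts a sequence $\Sigma_i'\in\mathcal{E}(1,0,R_i)$ with $R_i\to\infty$, and uses the lamination theory and chord-arc bounds to pass to a smooth limit $\Sigma_\infty\in\mathcal{E}(1,0)$ with multiplicity one. Then Theorem~\ref{FourthMainThm} (uniqueness of the helicoid) identifies $\Sigma_\infty$ as a helicoid, and Lemma~\ref{GraphLem} converts $C^1$-smallness of the \emph{normal} graph $\nu_i$ over $\Sigma_\infty$ into the bi-Lipschitz statement. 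The key inputs are thus compactness plus the global classification theorem, not any effective sheet-by-sheet comparison.

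Your proposal has a genuine gap at its core: the Colding--Minicozzi estimates available in the paper do \emph{not} show that $u_\pm$ is $C^1$-close to $\alpha\theta$ on the ball $B_{Rs}$. What one gets from Section~\ref{sheetssection} and Section~3 is (i) gradient smallness $|\nabla u|\leq\epsilon$ and a lower bound on separation, and (ii) an asymptotic expansion (Lemmas~\ref{exclemma}--\ref{oscBndPrp}) whose error terms $C\rho^{-1/4}$ are small only for $\rho$ large relative to the blow-up scale. Neither gives that $u_\theta$ is close to a \emph{specific} constant $\alpha$, or that $u_\rho$ is nearly $0$, uniformly down to $\rho$ comparable to $s$ --- which is exactly where most of $B_{Rs}$ lives. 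In other words, the theory gives ``looks like some double spiral staircase'' on this scale, not ``looks like \emph{the} helicoid with pitch $\alpha$.'' Bridging that gap is precisely what the uniqueness theorem does, and you have bypassed it.

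The axis problem you flag is real and is a second, independent gap. Your cutoff-and-interpolate idea presupposes that the axis curve of $\Sigma$ is $C^1$-close to a line and that the two surfaces are $C^1$-close as surfaces (not merely as graphs over a plane) in a tube around it; nothing in the earlier sections provides this. The paper sidesteps the issue entirely by writing $\Sigma_i$ as a normal graph over the limit helicoid --- a description that is uniform across the axis --- rather than as a vertical graph over a plane.
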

Here $\mathcal{E}(1,g,R)$ denotes the space of embedded minimal
surfaces $\Sigma \subset \Real^3$ of genus $g$ and with smooth, connected
boundary, $\partial \Sigma \subset \partial B_R(0)$.  We say a
surface has genus $g$ if it is diffeomorphic to a punctured,
compact, oriented genus $g$ surface, though in this paper we restrict attention to disks, i.e. $g=0$.

Colding and Minicozzi, in their work on the shapes of
embedded minimal disks \cite{CM1,CM2,CM3,CM4} (see also
\cite{CMPNAS} for a non-technical overview), show that a $\Sigma$ as
in the above theorem looks, on a scale relative to $R$, roughly like
the helicoid -- an essentially qualitative description. Theorem
\ref{FifthMainThm} sharpens this description, though on a much
smaller scale, giving a quantitative description of such a disk near
a point of large curvature. Examples constructed by Colding and
Minicozzi \cite{CMPVNP}, Khan \cite{K}, Kleene \cite{Kl}, and Hoffman and White \cite{HW3} demonstrate that
this sharper description cannot hold on the outer scale $R$  -- we refer the interested reader to \cite{BB2}.

To prove Theorem \ref{FifthMainThm} we argue by contradiction, coupling the lamination theory of Colding and Minicozzi of \cite{CM4} with the so-called uniqueness of the
helicoid:
\begin{thm}
\label{FourthMainThm} Any $\Sigma \in \mathcal{E}(1,0)$ is either
a plane or a helicoid.
\end{thm}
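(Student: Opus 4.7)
The strategy is a proof by contradiction using Theorem \ref{FifthMainThm} together with the Colding--Minicozzi lamination theory. Suppose, toward a contradiction, that $\Sigma \in \mathcal{E}(1,0)$ is neither a plane nor a helicoid.

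First I would dispose of the bounded curvature case. If $\sup_\Sigma |A|^2 < \infty$, then rescalings $\lambda_k \Sigma$ with $\lambda_k \to 0$ have uniform curvature bounds, so (after passing to a subsequence) they converge smoothly to a complete embedded minimal lamination of $\Real^3$; by the classification of such blow-down limits and the fact that $\Sigma$ is simply connected and properly embedded, $\Sigma$ is forced to be a plane, contrary to assumption. Hence one may assume $\sup_\Sigma |A|^2 = \infty$.

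Choose $p_k \in \Sigma$ with $|A|(p_k) \to \infty$. A standard point-picking argument produces blow-up pairs $(p_k, s_k)$ in the sense of Section \ref{sheetssection} with $s_k \to 0$. Theorem \ref{FifthMainThm} applies at each such pair: for any fixed $R$ and $\epsilon$, the component of $\Sigma \cap B_{R s_k}(p_k)$ through $p_k$ is bi-Lipschitz within $(1-\epsilon, 1+\epsilon)$ of a piece of a helicoid. Rescaling $\Sigma$ by $s_k^{-1}$ and translating $p_k$ to the origin therefore produces surfaces $\Sigma_k$ which, on any fixed compact subset of $\Real^3$, are bi-Lipschitz close to pieces of helicoids. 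Passing to a subsequential smooth limit (invoking the one-sided curvature estimate to control the sheets away from the singular axis) yields a helicoid $H_\infty$ with definite axis $\ell_\infty$ as the smooth limit of $\Sigma_k$.

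The main obstacle is upgrading this local rescaled limit into a global identification $\Sigma \cong H_\infty$. The idea is to show that the axis is canonical: any two sequences of blow-up pairs must produce limits with the same axis. This uses the multi-valued graph structure provided by Theorem \ref{FifthMainThm} at each blow-up pair, together with the one-sided curvature estimate, to propagate the helicoidal sheets outward to arbitrarily large regions, forcing any putative second axis into a configuration incompatible with embeddedness or simple connectivity of $\Sigma$. Once a single global axis is established and the helicoidal structure has been extended to all of $\Sigma$, a Weierstrass-type rigidity argument, comparing Gauss maps and height differentials on the simply connected surface, identifies $\Sigma$ with a helicoid and completes the contradiction.
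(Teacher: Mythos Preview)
Your proposal has a fundamental circularity problem. In this paper, Theorem~\ref{FifthMainThm} is proved \emph{using} Theorem~\ref{FourthMainThm}: in Section~6 the contradiction argument takes a sequence $\Sigma_i'$, extracts a smooth limit $\Sigma_\infty \in \mathcal{E}(1,0)$, and then explicitly invokes Theorem~\ref{FourthMainThm} to conclude that $\Sigma_\infty$ is a helicoid. So you cannot cite Theorem~\ref{FifthMainThm} as an input to a proof of Theorem~\ref{FourthMainThm}. The logical order in the paper is the reverse of what your strategy assumes.

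Even setting the circularity aside, your final paragraph is not a proof but a wish list. Knowing that every blow-up limit of $\Sigma$ is a helicoid does not, by itself, force $\Sigma$ to be a helicoid; this is precisely the hard content of the theorem. Your sketch of ``propagating helicoidal sheets outward'' and then applying ``a Weierstrass-type rigidity argument'' hides exactly the work the paper carries out in detail: the decomposition of Theorem~\ref{FirstMainThm} into a strictly spiraling region $\mathcal{R}_S$ and an axis region $\mathcal{R}_A$ with $|\nabla_\Sigma x_3|\geq \epsilon_0$, the Rado-type argument giving Proposition~\ref{SecondMainThm} (so $\nabla_\Sigma x_3\neq 0$ globally and level sets are single curves), the analysis of $h=\log g$ in Proposition~\ref{hpoly} showing $h$ is a conformal diffeomorphism onto half-planes, and finally Proposition~\ref{ConfDiffprop} showing $z=x_3+ix_3^*$ is onto $\mathbb{C}$ and $h\circ z^{-1}$ is linear. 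None of these steps is supplied, or even approximated, by your compactness outline.
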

Here $\mathcal{E}(1,g,\infty)= \mathcal{E}(1,g)$ is the space of
complete, embedded minimal surfaces in $\Real^3$ with genus $g$ and
one end. Notice there is no a priori assumption that elements
are properly embedded.  This is because Colding
 and Minicozzi prove in Corollary 0.13 of \cite{CY} that a complete, embedded minimal surface of finite topology is automatically properly embedded, a fact we use throughout.

Theorem \ref{FourthMainThm} was first proved by Meeks and Rosenberg in
\cite{MR}.  Their argument uses, in an essential manner, the theory
of Colding and Minicozzi, in particular the lamination theory and
one-sided curvature estimate of \cite{CM4}. In this paper, we will
provide a new and more geometric proof of this fundamental theorem.  Our
argument makes direct use of the results of Colding and Minicozzi on
the geometric structure of embedded minimal disks from
\cite{CM1,CM2,CM3,CM4}. Importantly, the geometric decomposition
described in Theorem \ref{FirstMainThm} below, and hence the entire proof, can be extended to arbitrary
$\Sigma \in \mathcal{E}(1,g)$ for $g>0$.  Indeed, in \cite{BB3} we prove
the following generalization of Theorem \ref{FourthMainThm}:
\begin{thm}
Let $\Sigma\in \mathcal{E}(1,g)$. Then $\Sigma$ is
\textit{conformally} a once punctured, compact Riemann surface.
Moreover, if $\Sigma$ is non-flat, it is asymptotic to a helicoid.
\end{thm}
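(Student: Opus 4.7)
The plan is to mirror the strategy used in the new proof of Theorem \ref{FourthMainThm}: first establish a geometric decomposition of $\Sigma$ that confines all of the topology to a compact region and describes $\Sigma$ outside this region as a pair of multi-valued graphs spiraling around an ``axis,'' and then use this decomposition to read off the conformal type and the asymptotic geometry of the end. As noted immediately before the statement, the key input is that the decomposition (Theorem \ref{FirstMainThm}) extends from $g=0$ to arbitrary $g>0$; granted this extension, the remaining work is mostly analytic and structural on an annular end.

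First, I would invoke \cite{CY} so that $\Sigma\in\mathcal{E}(1,g)$ is properly embedded, then perform a blow-down by taking rescalings $\lambda_k^{-1}\Sigma$ with $\lambda_k\to\infty$. The Colding--Minicozzi lamination theory together with the one-sided curvature estimate produces a subsequential limit that is a foliation of $\Real^3$ by parallel planes with singular set a single Lipschitz curve transverse to the leaves. Because $\Sigma$ has one end, the direction of the leaves and the location of the singular axis are unique, giving an asymptotic helicoid axis. The genus-$g$ version of Theorem \ref{FirstMainThm} then ensures that there is a compact set $K$ containing all of the nontrivial topology such that $\Sigma\setminus K$ consists of two multi-valued graphs over the plane perpendicular to this axis, glued along an approximate axis; in particular $\Sigma\setminus K$ is topologically an annulus.

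Next I would analyze the end $E=\Sigma\setminus K$. Because $E$ is a smooth annular end modeled by two controlled multi-valued graphs, a logarithmic change of coordinates adapted to the axis (essentially, writing the projection in cylindrical coordinates and unwrapping the $\theta$-variable) identifies $E$ conformally with a punctured disk. Gluing this to the compact genus-$g$ surface $K\cap\Sigma$ yields that $\Sigma$ is conformally a once-punctured compact Riemann surface of genus $g$. For the asymptotic statement, the multi-graph decomposition forces the Weierstrass data $(g, dh)$ on $E$ to extend to the puncture in the same qualitative way as for the helicoid: $\log g$ is asymptotic to a linear function of the uniformizing coordinate and $dh$ is asymptotic to a constant $1$-form. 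Direct comparison of Weierstrass data then shows that $\Sigma$ is asymptotic to a helicoid.

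The main obstacle is the first step: showing that for $g>0$ the genus is in fact confined to a compact region and, equivalently, that the blow-down limit is a smooth foliation by planes with a single Lipschitz singular curve rather than something more degenerate. In the simply-connected case handled by Theorem \ref{FourthMainThm} this is built into the Colding--Minicozzi structure theorem, but when $g>0$ one must rule out handles escaping to infinity along $\Sigma$. This requires a careful application of the one-sided curvature estimate, together with a no-mixing argument showing that a sequence of distinct ``genus-bearing'' regions would force either a violation of embeddedness in the limit or the appearance of a second end. Once this concentration of topology is established, the subsequent conformal and asymptotic analysis proceeds on an annular end and follows the genus zero argument with only notational changes.
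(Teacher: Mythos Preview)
This theorem is not proved in the present paper. It is only announced here and attributed to the companion paper \cite{BB3}; the sentence immediately preceding the statement reads ``Indeed, in \cite{BB3} we prove the following generalization of Theorem \ref{FourthMainThm}.'' Consequently there is no proof in this paper for your proposal to be compared against.

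That said, your outline is broadly in line with what the paper itself says about the strategy: the paragraph before the theorem asserts that ``the geometric decomposition described in Theorem \ref{FirstMainThm} below, and hence the entire proof, can be extended to arbitrary $\Sigma\in\mathcal{E}(1,g)$ for $g>0$,'' and you correctly identify the confinement of the genus to a compact set as the principal new obstacle. Two cautions, however. First, your conformal step on the annular end is too glib: in the genus-zero argument the global coordinate is $z=x_3+ix_3^*$, but for $g>0$ the conjugate $x_3^*$ can have periods, so one cannot simply ``unwrap'' and declare the end a punctured disk; the actual argument must control the flux of $\nabla_\Sigma x_3$ around the end and then argue as in Propositions \ref{hpoly} and \ref{ConfDiffprop} on the annular piece. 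Second, your claim that ``$\log g$ is asymptotic to a linear function'' and ``$dh$ is asymptotic to a constant $1$-form'' is the conclusion, not an input; in the genus-zero case this is obtained only after showing $z$ is onto and invoking a Picard-type argument, and the higher-genus version needs an analogue of that reasoning on the end rather than a direct comparison of Weierstrass data. So while your plan matches the paper's advertised route, several of the ``only notational changes'' you anticipate are in fact substantive.
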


In their paper, Meeks and Rosenberg first use the lamination
theory to deduce that (after a rotation) a homothetic blow-down of a
non-flat $\Sigma\in \mathcal{E}(1,0)$ is, away from some
Lipschitz curve, a foliation of flat parallel planes transverse to
the $x_3$-axis. This gives, in a very weak sense, that the surface is
asymptotic to a helicoid, which they use to conclude that the
Gauss map of $\Sigma$ omits the north and south poles. The
asymptotic
 structure,  a result of Collin, Kusner, Meeks and Rosenberg regarding the parabolicity of minimal surfaces \cite{CKMR} and some very delicate complex analytic arguments, are combined to show that $\Sigma$ is conformally equivalent to $\mathbb{C}$.
Finally, by looking at the level sets of the $\log$ of the stereographic
projection of the Gauss map and using
 a Picard
type argument, they show that this holomorphic map does not have an
essential singularity at $\infty$ and is, in fact, linear.  The
Weierstrass representation then implies $\Sigma$ is the helicoid.

The geometric approach of our paper allows for a more direct
argument.  Recall, the helicoid contains a central ``axis'' of large
curvature away from which it consists of two multivalued graphs
spiraling together, one strictly upward, the other downward.  Note that the known embedded genus-one helicoids -- constructed by Weber, Hoffman and Wolf \cite{WHW}  and
by Hoffman and White \cite{HW} -- behave similarly.  We first show that this is the structure
of any non-flat $\Sigma \in \mathcal{E}(1,0)$:
\begin{thm}
\label{FirstMainThm} There exist $\epsilon_0>0$ and disjoint subsets of $\Sigma$,
$\mathcal{R}_A$ and $\mathcal{R}_S$, with $\Sigma=\mathcal{R}_A\cup
\mathcal{R}_S$ such that:
\begin{enumerate}
\item after possibly rotating $\Real^3$,
$\mathcal{R}_S$ can be written as the union of two (oppositely
oriented) strictly spiraling multivalued graphs $\Sigma^1$ and $\Sigma^2$; 
\item in $\mathcal{R}_A$, $|\nabla_\Sigma x_3|\geq \epsilon_0$.
\end{enumerate}
\end{thm}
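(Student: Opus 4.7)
The plan is to combine the lamination theory and multivalued graph decomposition of Colding and Minicozzi with a blow-down of $\Sigma$. I would begin by applying the lamination theorem of \cite{CM4} to a sequence of homothetic blow-downs $\lambda_i \Sigma$ with $\lambda_i \to 0$. Because $\Sigma \in \mathcal{E}(1,0)$ is non-flat and has one end, the limit lamination of $\Real^3$ must, after a rotation sending the axis of the limit to the $x_3$-axis, be the foliation by horizontal planes with Lipschitz singular curve $\mathcal{S}$ through the origin. This fixes the rotation advertised in the statement.

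Next I would define $\mathcal{R}_A = \{x \in \Sigma : |\nabla_\Sigma x_3|(x) \geq \epsilon_0\}$ for a small fixed $\epsilon_0 > 0$ and $\mathcal{R}_S = \Sigma \setminus \mathcal{R}_A$, so that condition (2) holds by construction. The large curvature of the blow-down along $\mathcal{S}$ pulls back to a discrete sequence of blow-up pairs $(y_i, s_i)$ on $\Sigma$ whose $1/s_i$-rescalings converge to pieces of a helicoid, and every such $y_i$ lies in $\mathcal{R}_A$ since $|\nabla x_3|$ is close to $1$ near the axis of a helicoid.

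To prove (1), I would analyze $\mathcal{R}_S$, where the tangent plane of $\Sigma$ is nearly horizontal. Near each blow-up pair, the one-sided curvature estimate and the multivalued graph decomposition of \cite{CM3} produce two oppositely oriented multivalued graphs spiraling away from $y_i$. The key task is to promote this local picture to a global one: the two local sheets extend uniquely, without merging, up and down the axis, and the hypotheses of genus zero and one end prevent any additional sheet from appearing between consecutive scales. Assembling these extensions yields global multigraphs $\Sigma^1, \Sigma^2$ with $\mathcal{R}_S = \Sigma^1 \cup \Sigma^2$. Their strict spiraling then follows from the sublinear separation growth estimates and embeddedness: the separation between consecutive sheets is uniformly positive on compact angular intervals, so $x_3$ is strictly monotone in the angular parameter on each sheet.

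The principal obstacle is precisely this local-to-global step. The Colding-Minicozzi decomposition supplies multigraph structure only on scales set by individual pairs $(y_i, s_i)$, and organizing these local descriptions into two coherent global multigraphs on $\mathcal{R}_S$ requires careful use of embeddedness, the one-sided curvature estimate, and the topological restrictions on $\Sigma$, both to rule out sheet mergers and to exclude the sudden appearance of new sheets as one moves between successive scales along $\mathcal{S}$.
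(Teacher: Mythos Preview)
Your outline diverges from the paper in two substantive ways, and one of them is a genuine gap.

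\textbf{The strict spiraling step is wrong as stated.} You write that strict spiraling ``follows from the sublinear separation growth estimates and embeddedness: the separation between consecutive sheets is uniformly positive on compact angular intervals, so $x_3$ is strictly monotone in the angular parameter.'' But $w(\rho,\theta)=u(\rho,\theta+2\pi)-u(\rho,\theta)>0$ only says the \emph{average} of $u_\theta$ over $[\theta,\theta+2\pi]$ is positive; it does not give $u_\theta>0$ pointwise. (Think of $u(\rho,\theta)=\theta+\sin 2\theta$.) This is exactly the delicate point. The paper handles it in Section~3 by invoking the asymptotic expansion of \cite{EXC} (Lemma~\ref{exclemma}) to bound $\osc_{C(\rho)} u_\theta$ by $C\rho^{-1/4}+C\epsilon|w|$, and then combines this with the lower bound on the mean coming from $w\geq C_2\rho^{-\epsilon}$ to force $u_\theta>0$ for $\rho$ large (Proposition~\ref{uThetaLowBndPrp}). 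Without something of this strength your argument does not close.

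\textbf{The overall architecture is also different.} You make (2) a tautology by setting $\mathcal{R}_A=\{|\nabla_\Sigma x_3|\geq\epsilon_0\}$ and push all the content into (1). The paper does the opposite: it defines $\mathcal{R}_A$ geometrically as a neighborhood of an iteratively constructed chain of blow-up pairs (Lemma~\ref{DecompFrstLem}, using Theorem~\ref{BlowUpPrsThm} directly rather than a blow-down/lamination argument), proves strict spiraling on the complement via the \cite{EXC} estimate above, then uses that spiraling to show each level set $\{x_3=c\}$ meets large cylinders in exactly two points (Lemma~\ref{RadoPrepLem}). The proof of Rado's theorem then gives $\nabla_\Sigma x_3\neq 0$ on \emph{all} of $\Sigma$, after which a Harnack inequality for $v=-2\log|\nabla_\Sigma x_3|$ (subharmonic since $\Delta_\Sigma v=|A|^2$) yields the uniform lower bound on $\mathcal{R}_A$. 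In your ordering you never establish $\nabla_\Sigma x_3\neq 0$ globally, so you have no mechanism to rule out stray components of $\{|\nabla_\Sigma x_3|<\epsilon_0\}$ sitting in the axis region, nor to control the topology of $\mathcal{R}_S$ beyond the local Colding--Minicozzi picture you already flagged as the main obstacle.
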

\begin{rem}
 We say $\Sigma^i$ ($i=1,2$) is a multivalued graph if it can be decomposed into $N$-valued
 $\epsilon$-sheets (see Definition \ref{epsilonsht}) with varying
 center. That is,
$\Sigma^i=\cup_{j=-\infty}^\infty \Sigma_j^i$ where each
$\Sigma_j^i=y^i_j+\Gamma_{u^i_j}$ is an $N$-valued $\epsilon$-sheet.
Strict spiraling then means that $(u_j^i)_\theta\neq 0$ for
all $j$.  A priori, the axes of the multivalued graphs vary, a fact
that introduces additional book-keeping.  For the sake of clarity,
we assume that each $\Sigma^i$ is an $\infty$-valued
$\epsilon$-sheet -- i.e $\Sigma^i$ is the graph, $\Gamma_{u^i}$, of
a single $u^i$ with $u^i_\theta\neq 0$.
\end{rem}

In order to establish this decomposition, we first use Colding and
Minicozzi's results on the structure of embedded minimal disks to
obtain the existence of two infinite-sheeted multivalued graphs,
$\Sigma^1,\Sigma^2,$ that spiral together.  Then, using a result of
\cite{EXC}, we show that such graphs can be approximated
asymptotically in a manner that allows one to show that far enough
out along each sheet of the multivalued graphs, the sheet strictly
spirals -- giving the region $\mathcal{R}_S$. 
An application of the proof of Rado's theorem
\cite{RadoThm} then implies $|\nabla_\Sigma x_3|\neq 0$ on
$\mathcal{R}_A$, the subset of $\Sigma$ away from the two
multivalued graphs.  Finally, a Harnack inequality gives the
uniform lower bound.  In showing this, one obtains:
\begin{prop}
\label{SecondMainThm} On $\Sigma$, after a rotation of $\Real^3$,
$\nabla_\Sigma x_3 \neq 0$ and, for all $c\in \Real$, $\Sigma\cap
\set{x_3=c}$ consists of exactly one properly embedded smooth curve.
\end{prop}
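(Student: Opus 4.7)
The strategy is to deduce Proposition \ref{SecondMainThm} directly from the structural decomposition of Theorem \ref{FirstMainThm}. First I would verify that $\nabla_\Sigma x_3$ vanishes nowhere on $\Sigma$. Part (2) of Theorem \ref{FirstMainThm} provides the uniform lower bound $|\nabla_\Sigma x_3|\geq \eps_0>0$ on $\mathcal{R}_A$. On $\mathcal{R}_S=\Sigma^1\cup\Sigma^2$, each sheet is the graph $\Gamma_{u^i}$ of a function $u^i$ with $u^i_\theta\neq 0$; a horizontal tangent plane at a graph point would force $\nabla u^i=0$, contradicting strict spiraling. Hence $\nabla_\Sigma x_3\neq 0$ on all of $\Sigma$, and each level set $\Sigma\cap\{x_3=c\}$ is a smooth embedded $1$-submanifold by the implicit function theorem. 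Properness in $\Real^3$ is immediate since $\Sigma$ is properly embedded.

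To establish connectedness, I would classify the components of $\Sigma\cap\{x_3=c\}$ topologically: each is either a circle or a proper embedded arc. A closed loop of the level set would bound a disk in $\Sigma$, since $\Sigma$ is a topological disk, on which the harmonic function $x_3-c$ vanishes on the boundary; the maximum principle and unique continuation then force $x_3\equiv c$ on all of $\Sigma$, making $\Sigma$ a horizontal plane and contradicting non-flatness. Hence every component is a proper embedded arc, with both of its ends escaping to infinity in $\Sigma$.

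Finally I would count ends. Each component lies in the horizontal plane $\{x_3=c\}$, so an end escaping to infinity in $\Sigma$ must do so with $r\to\infty$. Because each sheet $\Sigma^i$ is an $\infty$-valued graph over a region of the form $\{r>r_0^i\}$, the axial region $\mathcal{R}_A$ sits inside the cylinder $\{r\leq r_*\}$ with $r_*=\max(r_0^1,r_0^2)$; so every end at infinity necessarily lies in one of the two sheets. On each $\Sigma^i$, strict spiraling makes $u^i_\theta$ of constant sign on the universal-cover domain, and the implicit function theorem identifies $\{u^i=c\}$ as the graph of a single smooth function $\theta=\theta_i(r)$ on $(r_0^i,\infty)$, i.e., exactly one arc with one end at $r=\infty$. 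Thus there are exactly two ends in $\mathcal{R}_S$, and since each arc contributes two ends, there is exactly one arc.

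The main delicate step is justifying that $\mathcal{R}_A$ is radially bounded and that each sheet's level curve has precisely one end at $r=\infty$; both require extracting the correct geometric consequences from the strict spiraling and $\infty$-valued assumptions in Theorem \ref{FirstMainThm}. Once these are in hand, the end-count $2=2k$ gives $k=1$, yielding a single connected proper arc.
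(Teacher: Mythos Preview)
Your argument has a circularity problem. You invoke Part (2) of Theorem \ref{FirstMainThm} --- the uniform lower bound $|\nabla_\Sigma x_3|\geq\eps_0$ on $\mathcal{R}_A$ --- to conclude $\nabla_\Sigma x_3\neq 0$ there. But in the paper's logical order that lower bound is established \emph{after} Proposition \ref{SecondMainThm}, and its proof relies on it: one applies a Harnack inequality to $v=-2\log|\nabla_\Sigma x_3|$, and for $v$ to be well defined and smooth one must already know $\nabla_\Sigma x_3\neq 0$ (see the proof of Theorem \ref{FirstMainThm} at the end of Section 4, which explicitly cites Proposition \ref{SecondMainThm}). At the stage where Proposition \ref{SecondMainThm} is being proved, all that is available is Lemma \ref{DecompFrstLem}: the strictly spiraling graphs $\Sigma^1,\Sigma^2$ in $\tilde{\mathcal{R}}_S$, with no information whatsoever about $\nabla_\Sigma x_3$ on $\tilde{\mathcal{R}}_A$.

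The paper closes this gap in the opposite direction from yours. Rather than first knowing $\nabla_\Sigma x_3\neq 0$ and then counting components, it first shows (Lemma \ref{RadoPrepLem}) that for each height $c$ and all large $\rho$, the circle $\{x_1^2+x_2^2=\rho^2,\,x_3=c\}$ meets $\Sigma$ in exactly two points; this uses only the strict spiraling in $\mathcal{R}_S$ and remains valid even if $x_3$ has critical points in $\tilde{\mathcal{R}}_A$. Then the proof of Rado's theorem (a harmonic function on a disk with an interior critical point has a level curve through that point meeting the boundary in at least four points) rules out critical points of $x_3$ anywhere, in particular on $\tilde{\mathcal{R}}_A$. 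Your end-counting step is essentially this same two-point argument, so the fix is to reorder: count ends first, then invoke Rado to obtain non-vanishing of $\nabla_\Sigma x_3$, rather than assuming non-vanishing at the outset.

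A secondary issue: $\mathcal{R}_A$ does not sit inside a fixed cylinder $\{r\leq r_*\}$, since the blow-up scales $s_i$ grow as $|y_i|\to\infty$. What is true, and what Lemma \ref{RadoPrepLem} actually proves, is that at each fixed height the level set eventually leaves $\tilde{\mathcal{R}}_A$; this uses that the blow-up pairs lie in a cone and so cannot accumulate near a finite height.
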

Thus, $z=x_3+ix_3^*$ is a holomorphic coordinate on
$\Sigma$. Using the stereographic projection of the Gauss
map, $g$, we show that $z$ maps onto
$\mathbb{C}$ and so $\Sigma$ is conformally the plane.
 This follows from control on the behavior of $g$ in $\mathcal{R}_S$ due to the strict
 spiraling.  Indeed, away from a small neighborhood of
$\mathcal{R}_A$, $\Sigma$ is conformally the union of two closed
half-spaces with $\log g=h$ providing the identification.  It then
 follows that $h$ is also a
 conformal diffeomorphism which gives Theorem \ref{FourthMainThm}.

\section{Global Geometric Structure of $\Sigma$}
To study elements of $\mathcal{E}(1,0)$ we rely heavily on Colding
and Minicozzi's structural results for embedded minimal disks.
Much of this can be found in the series of papers
\cite{CM1,CM2,CM3,CM4}, with more technical analysis in
\cite{MMGPD}.  For a detailed overview of the theory, the
interested reader should consult the survey \cite{CMEMD}.
We have gathered the major results we use in Appendix \ref{structureapp}.  Additionally, for the convenience
of the reader we refer, when possible, to
Appendix \ref{structureapp} rather then directly to the papers of
Colding and Minicozzi.

\subsection{Preliminaries}
Throughout, let $\Sigma \in \mathcal{E}(1,0)$ be non-flat. Recall
$\Sigma \in \mathcal{E}(1,g,R)$ is an embedded minimal surface with
genus $g$ and so that $\partial \Sigma \subset
\partial B_R(0)$ is connected and $\Sigma \in \mathcal{E}(1,g)$ is a complete, embedded minimal surface with genus $g$ and one end. Here $B_r(y)$ is the Euclidean ball
of radius $r$ centered at $y$; for a point $p\in\Sigma$ we denote an
intrinsic ball in $\Sigma$ of radius $R$ centered at $p$ by
$\mathcal{B}_R(p)$. We let $|A|^2$ represent the norm squared of the
second fundamental form on $\Sigma$.
 Let
\begin{equation}
\mathbf{C}_\delta (y)=\set{x: (x_3-y_3)^2 \leq \delta^2
\big((x_1-y_1)^2 +(x_2-y_2)^2\big)}\subset \Real^3
\end{equation}
be the complement of a cone and set
$\mathbf{C}_\delta=\mathbf{C}_\delta (0)$. We denote a polar
rectangle by:
\begin{equation}
\pRect{r_1}{r_2}{\theta_1}{\theta_2}=\set{(\rho,\theta) \mid
r_1\leq \rho\leq r_2, \theta_1\leq \theta \leq \theta_2}.
\end{equation}
For a real-valued function, $u$, defined on a polar domain
$\Omega\subset \Real^+\times \Real$, define the map $\Phi_u
:\Omega\to \Real^3$ by $\Phi_u (\rho,\theta)=(\rho \cos
\theta,\rho \sin \theta, u(\rho,\theta))$. In particular, if $u$
is defined on $\pRect{r_1}{r_2}{\theta_1}{\theta_2}$, then
$\Phi_u(\pRect{r_1}{r_2}{\theta_1}{\theta_2})$ is a multivalued
graph over the annulus $D_{r_2}\backslash D_{r_1}$.  We define the
\emph{separation} of the graph $u$ by $w(\rho,\theta) = u(\rho,\theta +
2\pi) - u(\rho,\theta)$.  Thus, $\Gamma_u := \Phi_u (\Omega)$ is
the graph of $u$, and $\Gamma_u$ is embedded if and only if $w\neq
0$.  Finally, we say a graph $\Gamma_u$ \emph{strictly spirals} if $u_\theta \neq 0$.

Recall that $u$ satisfies the minimal surface equation if:
\begin{equation} \label{MinSurfEq}
\Div \left(\frac{\nabla u}{\sqrt{1+|\nabla u|^2}}\right)=0.
\end{equation}
The graphs of interest to us will also satisfy the following flatness condition:
\begin{equation}
\label{MainInEq21} |\nabla u|+\rho |\Hess_u|+4 \rho \frac{|\nabla
w|}{|w|}+\rho^2
 \frac{|\Hess_w|}{|w|}\leq \epsilon <\frac{1}{2\pi}.
\end{equation}
Note that if $w$ is the separation of a $u$ satisfying
\eqref{MinSurfEq} and \eqref{MainInEq21}, then $u$ and $w$ satisfy
uniformly elliptic second order equations.  Thus, if $\Gamma_u$ is embedded then
$w$ has point-wise gradient bounds and satisfies a Harnack
inequality.

In Colding and Minicozzi's work, multivalued minimal graphs are a
 basic building block used to study the structure of minimal surfaces.
 We also make heavy use of them and so introduce some notation.
\begin{defn}\label{weakepsilonsht}  A multivalued minimal graph $\Sigma_0$ is a \emph{weak} \emph{$N$-valued} (\emph{$\epsilon$}-)\emph{sheet} (\emph{centered at $y \in \Sigma$ on the scale $s>0$}) if $\Sigma_0=\Gamma_{u}+y$ and $u$, defined on
$\pRect{s}{\infty}{-\pi N}{ \pi N}$, satisfies  \eqref{MinSurfEq},
has $|\nabla u|\leq \epsilon$, and
$\Sigma_0\subset \mathbf{C}_{\epsilon}(y)$.
\end{defn}
We will often need more control on the sheets as well as a normalization at $\infty$:
\begin{defn} \label{epsilonsht} A multivalued minimal graph $\Sigma_0$ is a (\emph{strong}) \emph{$N$-valued} (\emph{$\epsilon$}-)\emph{sheet} (\emph{centered at $y$ on the scale $s$}), if $\Sigma_0=\Gamma_{u}+y$ is a weak $N$-valued $\epsilon$-sheet centered at $y$ and on scale $s$ and, in addition, $u$ satisfies
\eqref{MainInEq21} and $\lim_{\rho\to \infty} \nabla u(\rho,0)=0$.
\end{defn}
Using Simons' inequality, Corollary 2.3 of \cite{MMGPD} shows that
on the
one-valued middle sheet of a 2-valued graph satisfying
\eqref{MainInEq21}, the hessian of $u$ has faster than linear
decay and hence $u$ has an asymptotic tangent plane. This Bers-like
result implies that the normalization at $\infty$ in the
definition of an $\epsilon$-sheet is well defined. As an additional consequence, for an $\epsilon$-sheet,
\begin{equation}
  \label{GradDecay}
  |\nabla u| \leq  C\epsilon \rho^{-5/12}.
\end{equation}
Notice that if $\Sigma_0=\Gamma_u$ is a weak $\epsilon$-sheet then
because $u$ has bounded gradient and satisfies \eqref{MinSurfEq},
it is the solution to a uniformly elliptic second order equation.
As such, one can apply standard elliptic estimates to gain further
(interior) regularity.  In particular, looking on a sub-graph (and
possibly slightly rotating to normalize behavior at $\infty$) one
has that $\Sigma_0$ contains an $\epsilon$-sheet:
\begin{prop}
\label{WeakToEpsilonShtPrp} Given $N\in \mathbb{Z}^+$ and $\epsilon>0$, sufficiently small (depending on $N$), there
exist $N_0$ and $C$ depending only on $\epsilon$ and $N$ so:  Suppose that $\Sigma=\Gamma_u$ is a weak $N_0$-valued $\epsilon/C$-sheet centered at $0$ and on scale $s/C$.  Then (possibly after a small rotation) $\Sigma$ contains an $N$-valued $\epsilon$-sheet $\Sigma_0=\Gamma_{u_0}$
on the scale $s$.
\end{prop}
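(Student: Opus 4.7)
\emph{Proof plan.} The strategy is to exploit the fact that the minimal surface equation with small gradient is uniformly elliptic and quasilinear, so that standard elliptic regularity upgrades the crude $C^1$ bound of the weak-sheet hypothesis to the $C^2$ (and separation) bounds required by \eqref{MainInEq21}; the asymptotic normalization then follows from a Bers-type theorem after a small rotation. The roles of $N_0$ and $C$ are to provide, respectively, an angular buffer (a few extra sheets) and a radial buffer (a constant factor in the inner scale), which are consumed by the interior estimates.

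First, since $|\nabla u|\leq \epsilon/C$, equation \eqref{MinSurfEq} is uniformly elliptic with coefficients differing from those of the Laplacian by $O(\epsilon/C)$. Interior Schauder estimates on a parameter ball of radius $\sim\rho/2$ around each $(\rho,\theta)$ yield
\begin{equation*}
|\nabla u(\rho,\theta)|+\rho|\Hess u(\rho,\theta)|\leq C_1 \sup|\nabla u|\leq C_1\epsilon/C,
\end{equation*}
so that, upon choosing $C$ large relative to $C_1$, the first two terms of \eqref{MainInEq21} are bounded by $\epsilon/2$. Next, $w(\rho,\theta)=u(\rho,\theta+2\pi)-u(\rho,\theta)$ satisfies the linearization of \eqref{MinSurfEq} along $u$, which by the step-one bounds is a linear uniformly elliptic equation with coefficients close to those of the Laplacian. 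Because $\Gamma_u$ is embedded, $w$ has a fixed sign; the Harnack inequality and the usual interior estimates for positive solutions of elliptic PDEs then give
\begin{equation*}
\rho\frac{|\nabla w|}{|w|}+\rho^2\frac{|\Hess w|}{|w|}\leq C_2
\end{equation*}
on a slightly smaller annulus, and a further enlargement of $C$ brings the sum below $\epsilon/(8\pi)$, covering the remaining terms of \eqref{MainInEq21}.

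For the normalization at infinity, we invoke the Bers-type conclusion of Corollary 2.3 of \cite{MMGPD}: $|\Hess u|$ has faster-than-linear decay, so $\nabla u(\rho,0)$ tends to a limit $\nu_\infty$ of magnitude at most $\epsilon/C$ defining the normal of an asymptotic tangent plane $P$. A rotation of $\Real^3$ by the (small) angle aligning $P$ with the horizontal produces $\lim_{\rho\to\infty}\nabla u(\rho,0)=0$; it perturbs the earlier bounds and the cone containment $\Sigma\subset \mathbf{C}_{\epsilon/C}$ by at most $\epsilon/C$, both harmless once $C$ is large. Taking $N_0$ at least $N$ plus a few extra sheets ensures that after the rotation the sub-domain $\pRect{s}{\infty}{-\pi N}{\pi N}$ still lies in the (rotated) parameter domain, producing the desired strong $N$-valued $\epsilon$-sheet $\Sigma_0=\Gamma_{u_0}\subset\Sigma$.

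We expect the main obstacle to be the simultaneous calibration of $N_0$, $C$, and $\epsilon$ so that all four terms of \eqref{MainInEq21} close up at once, together with the verification that the small normalizing rotation preserves the cone containment. This is precisely why $\epsilon$ must be taken small depending on $N$: a larger $N$ forces the sheets into a narrower angular spread that tolerates less rotational wiggling before falling outside $\mathbf{C}_\epsilon$.
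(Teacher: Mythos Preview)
There is a genuine gap in the step where you handle the separation terms of \eqref{MainInEq21}. Interior Harnack and Schauder estimates on a parameter ball of radius $\sim\rho/2$ only yield
\[
\rho\frac{|\nabla w|}{|w|}+\rho^2\frac{|\Hess_w|}{|w|}\leq C_2
\]
with $C_2$ a \emph{universal} constant (essentially the Harnack constant for a ball of fixed relative radius); this constant does not become small as $\epsilon\to 0$ or as $C\to\infty$. Your assertion that ``a further enlargement of $C$ brings the sum below $\epsilon/(8\pi)$'' is therefore unjustified: enlarging the radial buffer $C$ does nothing to the Harnack constant on a ball whose radius is already the largest allowed by the angular width of the domain.

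What is actually needed is a large \emph{angular} buffer. In the conformal coordinates $(t,\theta)=(\log\rho,\theta)$ the linearized equation for $w$ is close to Laplace's equation on a strip of width $2\pi N_0$, and the interior gradient estimate for a positive solution on a ball of radius $R$ in these coordinates gives $|\nabla_{(t,\theta)}w|/|w|\lesssim 1/R$. To make this $\leq\epsilon$ one must be able to fit a ball of radius $\sim 1/\epsilon$, which forces $N_0-N$ (and, correspondingly, the log of the radial buffer) to be of order $1/\epsilon$. This is precisely the content of Proposition~II.2.12 of \cite{CM1} that the paper invokes, and why the paper takes $N_0=N+N_\epsilon$ with $N_\epsilon=N_\epsilon(\epsilon)$ and $C\geq 2e^{N_\epsilon}$. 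Your choice of ``$N_0$ at least $N$ plus a few extra sheets'' is too small; with only $O(1)$ extra sheets the $w$-terms in \eqref{MainInEq21} cannot be driven below any prescribed $\epsilon$. Once you allow $N_0-N$ and $\log C$ to grow like $1/\epsilon$, the rest of your outline (Schauder for the $u$-terms, Bers for the asymptotic plane, small rotation) matches the paper's argument.
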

\begin{proof}
Proposition II.2.12 of \cite{CM1} and standard elliptic estimates
give an $N_\epsilon\in \mathbb{Z}^+$ and $\delta_\epsilon>0$
depending only on $\epsilon$ so that if $u$ satisfies
\eqref{MinSurfEq} and $|\nabla u|\leq \epsilon/4$ on
$\pRect{e^{-N_\epsilon}}{\infty}{-\pi N_\epsilon}{\pi N_\epsilon}$
and $\Gamma_u\subset \mathbf{C}_{\delta_\epsilon}$, then on
$\pRect{1}{\infty}{0}{2\pi}$ we have the sum of all the terms of
\eqref{MainInEq21} bounded by $\epsilon/2$. Hence by the above (and a
rescaling) we see that for $N_0=N_\epsilon+N$ and
$C=\max\set{2e^{N_\epsilon}, \delta_\epsilon^{-1}}$,  $u$
satisfies \eqref{MainInEq21} on $\pRect{s/2}{\infty}{-\pi N}{\pi
N}$.  At this point we do not a priori know that $\lim_{\rho\to
\infty} \nabla u (\rho,0)=0$.  However, there is an asymptotic
tangent plane. Thus, we may need  a small rotation (the size of
which is controlled by $\epsilon$) to make this parallel to the
$x_1$-$x_2$ plane.  Notice this rotation affects the inner scale
and the bound on \eqref{MainInEq21}.  Nevertheless, for small
enough rotations we may replace $s/2$ by $s$ and obtain
$\Sigma_0$.
\end{proof}
\subsection{Initial Sheets} \label{sheetssection}
We now use Colding and Minicozzi's work to establish the existence
of a suitable number of $\epsilon$-sheets within $\Sigma$. 
First, we give a condition for the existence of $\epsilon$-sheets.
Roughly, all that is required is a point with large curvature
relative to nearby points. This is made precise by:
\begin{defn}\label{defbup} The pair $(y,s)$, $y\in \Sigma$, $s>0$, is a
 (\emph{$C$}) \emph{blow-up pair} if
\begin{equation}
\sup_{\Sigma\cap B_s(y)} |A|^2\leq 4|A|^2(y)=4C^2 s^{-2}.
\end{equation}
\end{defn}
The existence of a blow-up pair in an embedded minimal disk forces the
surface to spiral nearby and this extends outward (see Theorem \ref{CM204Thm}). As a
consequence, after a suitable rotation, we obtain a weak sheet near
the pair; by Proposition \ref{WeakToEpsilonShtPrp} this contains an
 $\epsilon$-sheet. Hence, near any blow-up pair there is an $\epsilon$-sheet:
\begin{thm}
\label{InitShtExst} Given $N\in \mathbb{Z}^+$ and $\epsilon>0$ sufficiently small, there
exist $C_1, C_2>0$ so:  Suppose that $(0,s)$ is a $C_1$ blow-up
pair of $\Sigma$.  Then there exists (after a rotation of
$\Real^3$) an $N$-valued $\epsilon$-sheet $\Sigma_0=\Gamma_{u_0}$
on the scale $s$.
  Moreover,
the separation over $\partial D_s$ of $\Sigma_0$ is bounded below
by $C_2 s$.
\end{thm}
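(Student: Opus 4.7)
The plan is to chain together two inputs already available: Colding-Minicozzi's structural result for blow-up pairs (Theorem \ref{CM204Thm}) and Proposition \ref{WeakToEpsilonShtPrp}. The former produces a weak multivalued graph near a blow-up pair; the latter upgrades it to a strong $\epsilon$-sheet with the correct normalization at infinity.

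Given $N$ and $\epsilon>0$, first extract from Proposition \ref{WeakToEpsilonShtPrp} the constants $N_0 = N_0(N,\epsilon)$ and $C=C(N,\epsilon)$ needed to convert a weak $N_0$-valued $(\epsilon/C)$-sheet centered at $0$ on scale $s/C$ into a strong $N$-valued $\epsilon$-sheet on scale $s$. Then choose $C_1$ large enough, depending on $N_0$ and $\epsilon/C$, so that Theorem \ref{CM204Thm} applied at a $C_1$-blow-up pair $(0,s)$ produces, after a rotation of $\Real^3$, a weak $N_0$-valued $(\epsilon/C)$-sheet of $\Sigma$ centered at $0$ on scale $s/C$. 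The blow-up pair condition $\sup_{\Sigma\cap B_s(0)}|A|^2 \leq 4|A|^2(0)$ is precisely the uniform curvature control needed to prevent nearby high-curvature regions from interfering with the graph structure, and completeness of $\Sigma\in\mathcal{E}(1,0)$ supplies the unbounded outer scale required by Definition \ref{weakepsilonsht}. Applying Proposition \ref{WeakToEpsilonShtPrp} then yields the desired $\Sigma_0 = \Gamma_{u_0}$; the small additional rotation from that proposition can be absorbed into the first.

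For the lower bound $C_2 s$ on the separation over $\partial D_s$, I would argue by compactness and contradiction. Were no such $C_2>0$ to exist, there would be a sequence $\Sigma_k\in\mathcal{E}(1,0)$ each with a $C_1$-blow-up pair $(0,s_k)$ and an $N$-valued $\epsilon$-sheet $\Sigma^0_k = \Gamma_{u_k}$ on scale $s_k$, whose separations $w_k$ satisfied $\sup_{\partial D_{s_k}}|w_k| \leq s_k/k$. Rescaling by $s_k^{-1}$ normalizes $s_k \equiv 1$ and $|A_k|^2(0) = C_1^2$, with curvature uniformly bounded on $B_1(0)$. Local compactness for embedded minimal surfaces then yields a smooth subsequential limit $\Sigma_\infty$ with $|A_\infty|^2(0) = C_1^2 > 0$, while the uniformly elliptic equation for $w_k$ together with the Harnack inequality forces the limiting separation to vanish identically on its domain of definition. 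But then the limiting multivalued graph degenerates to a single-valued graph, and together with the cone containment $\Gamma_{u_k}\subset \mathbf{C}_{\epsilon/C}$ this forces the limit to be planar, contradicting $|A_\infty|^2(0)>0$.

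The main obstacle is the bookkeeping involved in composing the two applications: Theorem \ref{CM204Thm} produces a graph whose inner scale, gradient bound, and cone containment are given in terms of $C_1$, while Proposition \ref{WeakToEpsilonShtPrp} demands specific numerical relationships between $N_0$, $\epsilon/C$, and $s/C$. Matching these scales, and composing the two rotations so that the final sheet is genuinely centered at $0$ on scale $s$ with the correct asymptotic plane, is the only non-obvious step; the existence itself is essentially a direct consequence of the Colding-Minicozzi theory packaged above.
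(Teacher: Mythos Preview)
Your overall structure is right---chain a Colding--Minicozzi blow-up result with Proposition~\ref{WeakToEpsilonShtPrp}---but there are two issues.

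First, you invoke Theorem~\ref{CM204Thm}, which only produces a graph over a \emph{bounded} annulus $D_{\omega\bar R}\backslash D_{\bar R}$; no choice of $\omega$ yields a sheet defined on $\pRect{s/C}{\infty}{-\pi N_0}{\pi N_0}$ as Definition~\ref{weakepsilonsht} requires, and letting $\omega\to\infty$ forces the blow-up constant to blow up as well. Completeness of $\Sigma$ does not help here: what is needed is the extension result Theorem~\ref{CM103Thm}, or equivalently the packaged Theorem~\ref{CM202Thm}, which pushes the outer radius of the graph all the way to $\partial\Sigma$ (hence to infinity when $\Sigma\in\mathcal{E}(1,0)$). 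The paper uses Theorem~\ref{CM202Thm} directly, and that theorem already carries the separation lower bound over $\partial D_{r_0}$ as part of its statement (via Proposition~4.15 of \cite{CM2}), so both conclusions of Theorem~\ref{InitShtExst} come out at once.

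Second, your compactness argument for $C_2$ is a genuinely different route, and it can be made to work, but not as written. The step ``the limiting multivalued graph degenerates to a single-valued graph \ldots\ this forces the limit to be planar'' is the gap. The single-valued graph lives only over $D_\infty\backslash D_1$; nothing you have said controls $\Sigma_\infty$ inside the cylinder, where the curvature $|A_\infty|^2(0)=C_1^2$ sits. To close the argument you would need to observe that the gradient decay \eqref{GradDecay} forces finite total curvature on the graphical end, hence $\Sigma_\infty$ is a complete embedded minimal disk of finite total curvature with one end, and then invoke a classification (e.g.\ L\'opez--Ros, or the fact that the single embedded end must be planar and a genus-zero surface with one planar end is a plane). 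That is a fair amount of extra machinery compared with simply reading the separation bound off Theorem~\ref{CM202Thm}. You should also cite Lemma~2.26 and the chord-arc bounds of \cite{CY} to justify the uniform curvature and area bounds needed for smooth subsequential convergence on balls larger than $B_1$; the blow-up condition alone only controls $B_1$.
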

\begin{proof}
Using $\epsilon$ and $N$, let $N_0$ and $C$ be given by Proposition \ref{WeakToEpsilonShtPrp}.  Then to find the desired $\epsilon$-sheet we must ensure the existence of a weak $N_0$-valued $\epsilon/C$-sheet near $0$.

To that end, use $\epsilon/{C}$ and $N_0$ with Theorem \ref{CM202Thm} to obtain
$C_1', C_2'>0$.
 That is, if $(0,t/2)$ is a
$C_1'$ blow-up pair in $\Sigma$, then (up to rotating $\Real^3$) there is a weak $N_0$-valued $\epsilon/C$-sheet centered at $0$ and on scale $t$.  Thus, (up to a further small rotation) one has an $N$-valued $\epsilon$-sheet, $\Sigma_0=\Gamma_{u_0}$, centered at $0$ and on scale $Ct$.
The proof of Proposition 4.15 of \cite{CM2} provides a constant $C_2>0$  (depending only on $C$) so that $w_0
(Ct,\theta)\geq C_2 C t$. Finally, if we
set $C_1=2 C_1' C$ then $(0,s)$ being a $C_1$ blow-up
pair implies that $(0,\frac{s}{2C})$ is a $C_1'$ blow-up
pair. This gives the result.
\end{proof}


Once there is one $\epsilon$-sheet, $\Sigma_1$, in $\Sigma$, a barrier
argument shows that between the sheets of $\Sigma_1$, $\Sigma$
consists of exactly one other $\epsilon$-sheet. Namely, by Theorem
\ref{CM4I010Thm}, the parts of $\Sigma$ that lie in between an
$\epsilon$-sheet make up a second multivalued graph. Furthermore,
the one-sided curvature estimate of \cite{CM4} (See Appendix
\ref{oscsec}) gives gradient estimates which, coupled with
Proposition \ref{WeakToEpsilonShtPrp}, reveal that this graph contains an $\epsilon$-sheet. Thus, near a blow-up point,
$\Sigma$ contains two $\epsilon$-sheets spiraling together.

We now make the last statement precise. Suppose $u$ is defined on
$\pRect{1/2}{\infty}{-\pi N-3\pi}{\pi N+3\pi}$ and $\Gamma_u$ is
embedded.  We define $E$ to be the region over $D_\infty
\backslash D_1$ between the top and bottom sheets of the
concentric sub-graph of $u$.  That is:
\begin{multline}\label{Edefn}
E=\{(\rho\cos \theta, \rho\sin \theta, t): \\ 1\leq \rho\leq
\infty, -2\pi \leq \theta <0, u(\rho,  \theta-\pi N)<t<u(\rho,
\theta+ (N+2)\pi \}.
\end{multline}
Using Theorem \ref{CM4I010Thm}, Theorem \ref{InitShtExst}, and
the one-sided curvature estimate:
\begin{thm} \label{TwoInitShtExstThm}
Given $\epsilon>0$ sufficiently small, there exist $C_1, C_2>0$
so: Suppose $(0,s)$ is a $C_1$ blow-up pair.  Then there exist two
$4$-valued $\epsilon$-sheets $\Sigma_i=\Gamma_{u_i}$ ($i=1,2$) on
the scale $s$ which spiral together (i.e. $u_1(s,0)<
u_2(s,0)<u_1(s,2\pi)$).  Moreover, the separation over $\partial
D_s$ of $\Sigma_i$ is bounded below by $C_2 s$.
\end{thm}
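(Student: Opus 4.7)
The plan is the one outlined in the paragraph preceding the theorem: produce one sheet, sandwich a second multigraph between its top and bottom sheets, and upgrade the sandwich to a genuine $\epsilon$-sheet. The first sheet will come from Theorem \ref{InitShtExst}, the sandwich from Theorem \ref{CM4I010Thm}, and the upgrade from Proposition \ref{WeakToEpsilonShtPrp}, with the one-sided curvature estimate from Appendix \ref{oscsec} supplying the pointwise gradient control needed between the last two steps.

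Concretely, I would begin by applying Theorem \ref{InitShtExst} with a valence $N_1 \gg 4$ and a flatness parameter $\epsilon_1 \ll \epsilon$, fixed in advance so that Proposition \ref{WeakToEpsilonShtPrp} will later be applicable with output valence $4$ and flatness $\epsilon$. This produces $\Sigma_1 = \Gamma_{u_1}$, an $N_1$-valued $\epsilon_1$-sheet centered at $0$ on scale $s$, together with the lower bound $w_1(s,\cdot) \geq C_2' s$. Passing to a concentric sub-multigraph, let $E$ be the region between its top and bottom sheets, as in \eqref{Edefn}.

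Next I would apply Theorem \ref{CM4I010Thm} to $\Sigma_1$ inside the ambient $\Sigma$ to identify $\Sigma \cap E$ as a single multivalued graph $\tilde\Sigma_2 = \Gamma_{\tilde u_2}$, interleaved with the sheets of $\Sigma_1$. Because $\tilde\Sigma_2$ is squeezed between two nearly horizontal sheets of $\Sigma_1$, the one-sided curvature estimate yields pointwise gradient bounds on $\tilde u_2$. After a small rotation normalising the asymptotic plane, $\tilde\Sigma_2$ is then a weak multivalued $\epsilon/C$-sheet at a point $y_2 \in E$ on a scale comparable to $s$, with enough spare valences to feed into Proposition \ref{WeakToEpsilonShtPrp}. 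That proposition then outputs the desired $4$-valued $\epsilon$-sheet $\Sigma_2 = \Gamma_{u_2}$ on scale $s$.

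The spiraling inequality $u_1(s,0) < u_2(s,0) < u_1(s,2\pi)$ is immediate from $\Sigma_2 \subset E$, the definition \eqref{Edefn}, and embeddedness of $\Sigma$. For the separation lower bound, I would use that exactly one sheet of $\Sigma_2$ lies between consecutive sheets of $\Sigma_1$ (again by Theorem \ref{CM4I010Thm}), so $w_2$ and $w_1$ are comparable at some point of $\partial D_s$; a Harnack inequality applied to $w_2$ (which solves a uniformly elliptic linear equation by \eqref{MinSurfEq} and \eqref{MainInEq21}) then propagates this lower bound along all of $\partial D_s$. The principal obstacle is purely technical bookkeeping: $\Sigma_2$ is born at some $y_2 \in E$ on a slightly altered scale and after a small rotation, not at $0$ on the exact scale $s$, so $N_1$ and $\epsilon_1$ must be selected at the outset with enough slack to absorb these shifts without degrading the final valence, flatness, or separation constants.
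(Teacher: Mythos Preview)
Your proposal is correct and follows essentially the same route as the paper: Theorem \ref{InitShtExst} with excess valence for the first sheet, Theorem \ref{CM4I010Thm} for the sandwich, the one-sided curvature estimate (Corollary \ref{osccor}) for gradient control on $\tilde u_2$, and Proposition \ref{WeakToEpsilonShtPrp} to upgrade the second graph, with the separation of $\Sigma_2$ forced by the interleaving. Two small simplifications the paper notes that streamline your bookkeeping: no additional rotation is needed when applying Proposition \ref{WeakToEpsilonShtPrp} to $\tilde u_2$ (the asymptotic plane is already fixed by the $\epsilon$-sheet $\Sigma_1$), and both sheets are centered at $0$ rather than at some shifted $y_2\in E$, so the only slack to absorb is in the radial scale.
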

\begin{rem} We refer to $\Sigma_1, \Sigma_2$ as ($\epsilon$-)blow-up
 sheets \emph{associated
with} $(y,s)$.
\end{rem}
\begin{proof}
Fix $\epsilon_0>0$ as in Theorem \ref{CM4I010Thm}. For $\epsilon <
\epsilon_0$ and $N=4$, choose $N_0$ and $C$ as given by Proposition
\ref{WeakToEpsilonShtPrp}. With $\tilde{N}=10+N_0$ denote by $C_1',
C_2'$ the constants given by Theorem \ref{InitShtExst} (with
$\tilde{N}$ replacing the $N$ in the theorem). Thus, if $(0,r)$ is a
$C_1'$ blow-up pair then there exists an $\tilde{N}$-valued
$\epsilon$-sheet $\Sigma'_1=\Gamma_{u'_1}$ on scale $r$ inside of
$\Sigma$ with $w(r,\theta) \geq C_2' r$.  Applying Theorem
\ref{CM4I010Thm} to $u'_1$,
 we see that $\Sigma\cap E\backslash \Sigma_1'$ is given by the graph
 of
 a function $u_2'$ defined on $\pRect{2r}{\infty}{-\pi
 \tilde{N}+2\pi}{\pi \tilde{N}-2\pi}$. As long as we can control the gradient of $u'_2$ this will give us a weak sheet.

To that end, let $\alpha=\epsilon/C$; use the one-sided curvature
estimate (Corollary \ref{osccor}) to choose
$\epsilon/C>\delta_0>0$. By \eqref{GradDecay} and the fact that the initial $\tilde{N}$ sheets of $u'_1$ exist outside a cone, there exists $\tilde{C}>1$, depending on $\epsilon, \delta_0$, and the constant in \eqref{GradDecay},
such that $|\nabla u_1'| \leq \delta_0$ on
$\pRect{\tilde{C}r}{\infty}{-\pi\tilde{N} + \pi}{\pi\tilde{N} - \pi}$ and
$u_1'$ restricted to this domain is contained in
$\mathbf{C}_{\delta_0} \backslash B_{\tilde{C}r}$. Thus, the gradient of
$u_2'$ restricted to
$\pRect{2\tilde{C}r}{\infty}{-\pi\tilde{N}+3\pi}{\pi\tilde{N} - 3\pi}$ is
bounded by $\epsilon/C$. Moreover, since $\tilde{N}-1$ sheets of
$u_1'$ are inside of $\mathbf{C}_{\delta_0}$, the $\tilde{N}-3$
concentric sheets of $u_2'$ are also in $\mathbf{C}_{\delta_0}$.
Thus, the graph of $u_2'$ over $\pRect{2\tilde{C}r}{\infty}{-\pi N_0}{\pi
N_0}$ is a weak $N_0$-valued $\epsilon/C$ sheet centered at $0$
and on scale $2\tilde{C}r$. Proposition
\ref{WeakToEpsilonShtPrp} then gives that the graph of $u_2'$ over
$\pRect{2C\tilde{C}r}{\infty}{-4\pi}{4\pi}$ is an $\epsilon$-sheet.
Notice no rotation is needed here as $u_1'$ is already an
$\epsilon$-sheet.

Let $u_1$ and $u_2$ be given by
 restricting $u_1'$ and $u_2'$ to $\pRect{2C\tilde{C}
 r}{\infty}{-4\pi}{4\pi}$
 and define $\Sigma_i=\Gamma_{u_i}$.
 Set $C_1=2C\tilde{C} C_1'$, so if $(0,s)$ is a $C_1$ blow-up pair
  then $\Sigma_i$ will exist on scale $s$.
  Integrating \eqref{MainInEq21}, the lower bound $C_2'$
  gives a lower bound on initial separation of $\Sigma_1$.  We find
 $C_2$ by noting that if the initial separation of $\Sigma_2$ was too small
 there would be two sheets between one sheet of $\Sigma_1$.
\end{proof}
\subsection{Blow-up Pairs}
Since $\Sigma$ is not a plane, we can always find at least one
blow-up pair
$(y,s)$ -- an immediate consequence of Lemma \ref{CM251Lem}.  We then use this initial pair to find a sequence of
blow-up pairs forming an ``axis" of large curvature. The key
results we need are Lemma \ref{CM251Lem}, which says that as
long as curvature is large enough in a ball, measured relative to the scale of the ball, we can find a
blow-up pair in the ball, and Corollary \ref{CM3III35Cor},
which guarantees points of large curvature above and below blow-up
points. Colding and Minicozzi, in Lemma 2.5 of \cite{CY}, provide a good
overview of this process of decomposing $\Sigma$ into blow-up
sheets.  The main result is the following:
\begin{thm}
\label{BlowUpPrsThm} For $1/2>\gamma>0$  and $\epsilon>0$ both
sufficiently small, let $C_1$ be given by Theorem
\ref{TwoInitShtExstThm}.  Then there exists $C_{in}>4$ and
$\delta>0$ so: If $(0,s)$ is a $C_1$ blow-up pair then there exist
$(y_+,s_+)$ and $(y_-,s_-)$, $C_1$ blow-up pairs, with $y_\pm \in
\Sigma\cap B_{C_{in}s}\backslash \left(
B_{2s}\cup\mathbf{C}_\delta \right)$, $x_3(y_+)>0>x_3(y_-)$, and
$s_\pm \leq \gamma|y_{\pm}|$.
\end{thm}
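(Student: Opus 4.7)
The plan is to combine the two tools highlighted in the paragraph preceding the statement. Corollary \ref{CM3III35Cor} produces points of large curvature above and below any existing blow-up pair, and Lemma \ref{CM251Lem} upgrades such points into $C_1$ blow-up pairs; Theorem \ref{TwoInitShtExstThm} then localizes the new pairs, ruling out that they sit on the blow-up sheets and thus forcing them into the axis region $\Real^3 \setminus \mathbf{C}_\delta$. The main obstacle will be coordinating constants so that the three containment/scale requirements hold simultaneously.

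First I would apply Theorem \ref{TwoInitShtExstThm} to $(0,s)$, fixing $\epsilon$ small and obtaining the two associated $\epsilon$-sheets $\Sigma_1,\Sigma_2$ on scale $s$, both contained in $\mathbf{C}_\epsilon$. Because each $\Sigma_i=\Gamma_{u_i}$ is an $\epsilon$-sheet, the gradient decay \eqref{GradDecay} together with standard elliptic regularity for \eqref{MinSurfEq} yields a pointwise bound of the form $|A|^2 \leq C\rho^{-2}$ on $\Sigma_1 \cup \Sigma_2$, where $\rho$ denotes horizontal distance to the $x_3$-axis. Thus curvature on the sheets is controlled relative to position.

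Next I would use Corollary \ref{CM3III35Cor} to obtain points $p_\pm \in \Sigma$ with $\pm x_3(p_\pm)>0$, $3s \leq |p_\pm| \leq C_0 s$, and $|A|^2(p_\pm)\,|p_\pm|^2$ as large as desired. Taking this quantity large compared to $C$ above rules out $p_\pm \in \Sigma_1 \cup \Sigma_2$. By Theorem \ref{CM4I010Thm} every component of $\Sigma$ between the sheets of $\Sigma_1$ lies on $\Sigma_2$, and the one-sided curvature estimate (Corollary \ref{osccor}) provides bounded curvature for $\Sigma$ just inside $\mathbf{C}_\delta$; together they force any part of $\Sigma$ meeting $\mathbf{C}_\delta$ in the annulus $B_{C_0 s}\setminus B_{3s}$ to lie on $\Sigma_1\cup\Sigma_2$. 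Hence $p_\pm \notin \mathbf{C}_{\delta'}$ for some $\delta' > \delta$.

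Finally, set $r_\pm := \gamma|p_\pm|/2$ and apply Lemma \ref{CM251Lem} inside $B_{r_\pm}(p_\pm)$: since $|A|^2(p_\pm)\,r_\pm^2$ is large, there is a $C_1$ blow-up pair $(y_\pm,s_\pm)$ with $y_\pm \in B_{r_\pm}(p_\pm)$ and $s_\pm \leq r_\pm$. With $C_{in}:=2C_0$ and $\gamma$ sufficiently small (depending on $C_0$ and $\delta'/\delta$), this perturbation keeps $y_\pm \in B_{C_{in}s}\setminus(B_{2s}\cup \mathbf{C}_\delta)$ and preserves the sign of $x_3(y_\pm)$; the scale bound $s_\pm \leq \gamma|y_\pm|$ follows from $|y_\pm|\geq (1-\gamma/2)|p_\pm|$. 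The hierarchy \emph{first $C_{in}$ large, then $\delta$ small, then $\gamma$ small} resolves the book-keeping.
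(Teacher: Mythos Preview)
The paper does not give a self-contained proof of Theorem~\ref{BlowUpPrsThm}; it only outlines the ingredients in the preceding paragraph (Corollary~\ref{CM3III35Cor} for large curvature above and below a blow-up pair, Lemma~\ref{CM251Lem} to convert such points to blow-up pairs) and refers to Lemma~2.5 of \cite{CY} for the details. Your proposal follows precisely this sketched route and fills it in correctly; the only minor point to tighten is that Corollary~\ref{CM3III35Cor} as stated yields a single region $\Sigma_0$, so it must be invoked twice---with two different choices of the sheets and the connecting curve $\nu$---to produce both $p_+$ and $p_-$, and the exclusion $p_\pm\notin\mathbf{C}_\delta$ already follows from Corollary~\ref{osccor} alone without appealing to Theorem~\ref{CM4I010Thm}.
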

Hence, given a blow-up pair, we can iteratively find a sequence of
blow-up pairs ordered by height and lying in a cone, with distance
between subsequent pairs bounded by a fixed multiple of the scale.
\section{Asymptotic Helicoids}
Lemma 14.1 of \cite{EXC} and the gradient decay \eqref{GradDecay}
shows
 that $\epsilon$-sheets can be
approximated by a combination of planar, helicoidal, and
catenoidal pieces.  Precisely, there is a ``Laurent expansion''
for the almost holomorphic function $u_x-iu_y$.  This
 result allows us to bound the
oscillation on broken circles
$C(\rho):=\pRect{\rho}{\rho}{-\pi}{\pi}$ of $u_\theta$, which
yields asymptotic lower bounds for $u_\theta$.
\begin{lem}\label{exclemma} Given $\Gamma_u$, a 3-valued
 $\epsilon$-sheet on scale 1, set $f=u_x-i
u_y$.  Then for $r_1 \geq 1$ and $\zeta =\rho e^{i\theta}$ with
$(\rho,\theta)\in \pRect{2r_1}{\infty}{-\pi}{\pi}$
\begin{equation}
f(\rho,\theta)=c \zeta^{-1}+g(\zeta)
\end{equation}
where $c =c(r_1,u)\in \mathbb{C}$ and $|g(\zeta)|\leq C_0
r_1^{-1/4} |\zeta |^{-1} +C_0 \epsilon r_1^{-1} |w(r_1,-\pi)|.$
\end{lem}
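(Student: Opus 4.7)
The plan is to apply Lemma 14.1 of \cite{EXC}, which provides a Laurent-type expansion for the almost-holomorphic function $f = u_x - iu_y$ associated with an $\epsilon$-sheet, together with the gradient decay \eqref{GradDecay} to quantify the remainder.

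First I would record why $f$ is approximately holomorphic. Since $u$ satisfies \eqref{MinSurfEq} and $|\nabla u| \leq \epsilon$ by \eqref{MainInEq21}, a direct computation gives $|\partial_{\bar\zeta} f| \leq C|\nabla u|^2 |\Hess u|$; by \eqref{MainInEq21} together with the faster-than-linear decay of $\Hess u$ (the Bers-type analysis referenced above \eqref{GradDecay}), this $\bar\partial$-defect is small and decays at infinity.

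Next I would fix $c = c(r_1, u)$ as the coefficient that captures the monodromy of $f$ on the $3$-valued sheet. Since $du = \frac{1}{2} f\, d\zeta + \frac{1}{2} \bar f\, d\bar\zeta$, integrating $du$ around a loop at radius $r_1$ on the multi-valued cover gives $\re \oint f\, d\zeta = w(r_1, \cdot)$; this forces $\im c$ to be proportional to the separation, with a parallel identity fixing $\re c$ via the corresponding ``catenoidal'' period. Setting $g = f - c\zeta^{-1}$ then produces a function whose period at $|\zeta| = r_1$ vanishes up to the small $\bar\partial$-defect of $f$, which is precisely the setting of Lemma 14.1 of \cite{EXC}.

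To bound $|g(\zeta)|$ on $\pRect{2r_1}{\infty}{-\pi}{\pi}$, I would invoke the Cauchy-type representation in Lemma 14.1 of \cite{EXC}: $g$ is expressed as the sum of a contour integral of the boundary values of $f$ on $|\zeta| = r_1$ and an area integral of $\partial_{\bar\zeta} f$ over $\{|\zeta| \geq r_1\}$. The boundary piece, controlled via \eqref{MainInEq21} and the fact that $|w(r_1, \theta)|$ is comparable to $|w(r_1, -\pi)|$ across $\theta$ (by the Harnack inequality and gradient bounds for $w$), gives a contribution of size at most $C_0 \epsilon r_1^{-1} |w(r_1, -\pi)|$. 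The area piece, using the $\rho^{-5/12}$ gradient decay from \eqref{GradDecay} to bound $|\partial_{\bar\zeta} f|$ together with the $|\zeta - \eta|^{-1}$ Cauchy kernel, produces a term of size $C_0 r_1^{-1/4} |\zeta|^{-1}$; the exponent $1/4$ emerges from integrating the $5/12$ decay against the kernel in polar coordinates.

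The main obstacle is matching the precise form of Lemma 14.1 of \cite{EXC} so that the error splits cleanly into exactly these two pieces, and, in particular, verifying that the boundary contribution is captured by $|w(r_1, -\pi)|$ alone rather than by the separation at intermediate angles; this reduction relies on the interior regularity for $w$ built into \eqref{MainInEq21}. Once the framework of \cite{EXC} is in place, the remaining work is careful bookkeeping with scales and constants.
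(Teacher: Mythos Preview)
Your approach matches the paper's: both reduce the lemma to Lemma~14.1 of \cite{EXC} after observing that $\epsilon$-sheets satisfy its hypotheses (via \eqref{MainInEq21} and \eqref{GradDecay}). The paper gives no argument beyond that citation, so your sketch of the Cauchy-type mechanism behind the cited result already goes further than the paper itself does.
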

For the proof of this lemma, see Lemma 14.1 of \cite{EXC}, noting
that $\epsilon$-sheets satisfy the necessary hypotheses.
Using this result we bound the oscillation.
\begin{lem}
\label{oscBndPrp} Suppose $\Gamma_u$ is a 3-valued
$\epsilon$-sheet on scale 1.  Then for $\rho\geq 2$, there exists
a universal $C$ so:
\begin{equation}
\label{uThetaoscBnd}
 \osc_{C(\rho)} u_\theta \leq C\rho^{-1/4}+C\epsilon |w(\rho,-\pi)|.
\end{equation}
\end{lem}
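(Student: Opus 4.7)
The plan is to reduce the oscillation bound for $u_\theta$ to the size of the ``error term'' $g$ in the Laurent-type expansion provided by Lemma \ref{exclemma}, and then absorb the difference of scales using the Harnack-type behavior of $w$ built into \eqref{MainInEq21}.

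The first step is a purely algebraic identity: in polar coordinates, a direct computation gives
\begin{equation}
u_\theta(\rho,\theta) = -\im(\zeta f(\zeta)), \qquad \zeta = \rho e^{i\theta},
\end{equation}
where $f=u_x-iu_y$ as in Lemma \ref{exclemma}. I would then apply that lemma with the choice $r_1=\rho/2$, which is admissible (the constraint $\rho\geq 2r_1$ is achieved with equality, and $r_1\geq 1$ follows from $\rho\geq 2$). This yields $\zeta f(\zeta) = c + \zeta g(\zeta)$. Since $c$ is independent of $\theta$, it drops out of the oscillation and
\begin{equation}
\osc_{C(\rho)} u_\theta = \osc_{C(\rho)} \im(\zeta g(\zeta)) \leq 2 \sup_{C(\rho)} |\zeta g(\zeta)|.
\end{equation}

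Plugging the estimate for $g$ from Lemma \ref{exclemma} into $|\zeta g(\zeta)| \leq |\zeta|(C_0 r_1^{-1/4}|\zeta|^{-1}+C_0\epsilon r_1^{-1}|w(r_1,-\pi)|)$ with $r_1=\rho/2$ produces, up to a universal constant, a bound of the form $C\rho^{-1/4} + C\epsilon|w(\rho/2,-\pi)|$. The final step is to replace $|w(\rho/2,-\pi)|$ by $|w(\rho,-\pi)|$, which is where the flatness inequality \eqref{MainInEq21} is used: the fourth term gives $|\nabla\log|w|| \leq \epsilon/(4\rho)$, so integrating along the radial segment from $(\rho/2,-\pi)$ to $(\rho,-\pi)$ yields
\begin{equation}
|w(\rho/2,-\pi)| \leq 2^{\epsilon/4}|w(\rho,-\pi)| \leq 2|w(\rho,-\pi)|,
\end{equation}
for $\epsilon<1$. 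Combining these estimates produces \eqref{uThetaoscBnd}.

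I do not anticipate any serious obstacle; the argument is essentially bookkeeping once Lemma \ref{exclemma} is in hand. The only mild subtlety is the choice $r_1 = \rho/2$, which is the borderline admissible value and ensures that the scale on the right-hand side matches the scale $\rho$ appearing on the left. The appearance of the factor $\epsilon$ (rather than just a universal constant) in front of $|w(\rho,-\pi)|$ comes for free from the corresponding factor in Lemma \ref{exclemma}, and is what will later allow a bootstrap into strict spiraling.
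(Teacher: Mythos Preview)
Your proposal is correct and follows essentially the same approach as the paper: the identity $u_\theta=-\im(\zeta f)$, the choice $r_1=\rho/2$ in Lemma \ref{exclemma} so that $c$ drops out of the oscillation, and the use of \eqref{MainInEq21} to pass from $|w(\rho/2,-\pi)|$ to $|w(\rho,-\pi)|$ are exactly the steps the paper takes. The only cosmetic difference is that your integration of \eqref{MainInEq21} yields the sharper factor $2^{\epsilon/4}\leq 2$ rather than the paper's $4$, which is immaterial.
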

\begin{proof}
Using Lemma \ref{exclemma} and
$u_\theta(\rho,\theta)=-\im \zeta f(\rho,\theta)$ for $\zeta=\rho
e^{i\theta}$, we compute:
\begin{eqnarray*}
\osc_{C(\rho)} u_\theta  &=& \sup_{|\zeta|=\rho} \im\left(
-c-\zeta g(\zeta)
  \right)-\inf_{|\zeta|=\rho} \im \left( -c-\zeta g(\zeta) \right) \\
                        &\leq & 2  \sup_{|\zeta|=\rho} |\zeta|
 |g(\zeta)| \; \leq \;  4 C_0 \rho^{-1/4}+2 C_0\epsilon |w(\rho/2,-\pi)|.
\end{eqnarray*}
The last inequality comes from Lemma \ref{exclemma}, setting $2
r_1=\rho$. Finally, integrate \eqref{MainInEq21} to get the bound
$|w|(\rho/2,-\pi) \leq 4|w|(\rho,-\pi)$ and choose $C$ sufficiently
large.
\end{proof}
Integrating $u_\theta$ around $ C(\rho)$ gives $w(\rho,-\pi)$,
which yields a lower bound on $\sup_{C(\rho)} u_\theta$ in terms
of the separation.  The oscillation bound of \eqref{uThetaoscBnd}
then gives a lower bound for $u_\theta$.  Indeed, for $\epsilon$
sufficiently small and large $\rho$, $u_\theta$ is positive.
\begin{prop}
\label{uThetaLowBndPrp} There exists an $\epsilon_0$ so: Suppose
 $\Gamma_u$ is a 3-valued $\epsilon$-sheet on scale 1 with
 $\epsilon<\epsilon_0$ and $w(1,\theta)\geq  C_2>0$.  Then
there exists $C_3=C_3(C_2)\geq 2$, so that on
$\pRect{C_3}{\infty}{-\pi}{\pi }$:
\begin{equation}
\label{uThetaLowBnd}
 u_\theta (\rho,\theta)\geq \frac{C_2}{8\pi}\rho^{-\epsilon}.
\end{equation}
\end{prop}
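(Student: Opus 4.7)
The plan is to combine three ingredients: a radial lower ``Harnack'' on the separation $w$ extracted from \eqref{MainInEq21}, the identity $w(\rho,-\pi) = \int_{-\pi}^{\pi} u_\theta(\rho,\theta)\,d\theta$, and the oscillation estimate of Lemma \ref{oscBndPrp}. Together these will upgrade an averaged lower bound on $u_\theta$ to a pointwise one.

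The third term of \eqref{MainInEq21} gives $|\nabla \log w| \leq \epsilon/(4\rho)$ (using $w>0$ by embeddedness and positivity at $\rho=1$). Integrating the radial component from $1$ to $\rho \geq 1$ together with the hypothesis $w(1,\theta)\geq C_2$ yields
\begin{equation*}
w(\rho,\theta) \geq C_2\,\rho^{-\epsilon/4} \geq C_2\,\rho^{-\epsilon}.
\end{equation*}
Since $\int_{-\pi}^{\pi} u_\theta(\rho,\theta)\,d\theta = w(\rho,-\pi)$, the mean value theorem locates $\theta^\star \in [-\pi,\pi]$ with $u_\theta(\rho,\theta^\star) = w(\rho,-\pi)/(2\pi)$. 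For any other $\theta \in [-\pi,\pi]$, Lemma \ref{oscBndPrp} then gives
\begin{equation*}
u_\theta(\rho,\theta) \geq u_\theta(\rho,\theta^\star) - \osc_{C(\rho)} u_\theta \geq \Bigl(\tfrac{1}{2\pi} - C\epsilon\Bigr)\, w(\rho,-\pi) - C\rho^{-1/4}.
\end{equation*}
Shrinking $\epsilon_0$ so that $C\epsilon_0 \leq 1/(4\pi)$ absorbs the $C\epsilon\,w$ term, leaving $u_\theta(\rho,\theta) \geq C_2\rho^{-\epsilon/4}/(4\pi) - C\rho^{-1/4}$. Choosing $C_3 = C_3(C_2)\geq 2$ large enough that $C\rho^{-1/4} \leq C_2\rho^{-\epsilon/4}/(8\pi)$ for $\rho\geq C_3$ (an explicit power of $C/C_2$) then produces $u_\theta(\rho,\theta) \geq C_2\rho^{-\epsilon/4}/(8\pi) \geq C_2\rho^{-\epsilon}/(8\pi)$, as claimed.

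Given Lemma \ref{oscBndPrp}, the argument is essentially routine; the only ``obstacle'' is bookkeeping. One must pick a single $\epsilon_0$ which simultaneously (a) validates the $\epsilon$-sheet machinery underlying Lemmas \ref{exclemma} and \ref{oscBndPrp}, and (b) is small enough to absorb the $C\epsilon\,w$ contribution to the oscillation. The constant $C_3(C_2)$ then simply records how long one must wait for the universal $C\rho^{-1/4}$ error to be dominated by the main term, which is controlled by the given lower bound $C_2$ on the initial separation.
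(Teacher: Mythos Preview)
Your proof is correct and follows essentially the same route as the paper: integrate \eqref{MainInEq21} to bound $w$ below, use $\int_{-\pi}^{\pi} u_\theta\,d\theta = w(\rho,-\pi)$ to get a lower bound at some point of $C(\rho)$, apply the oscillation bound of Lemma \ref{oscBndPrp}, absorb the $C\epsilon\,w$ term by shrinking $\epsilon_0$, and finally choose $C_3$ large using $\epsilon<1/4$. The only cosmetic differences are that you invoke the mean value theorem where the paper uses the supremum, and you record the slightly sharper intermediate decay $\rho^{-\epsilon/4}$ before weakening to $\rho^{-\epsilon}$.
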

\begin{proof}
 Since $\int_{-\pi}^\pi u_\theta (\rho,\theta) \; d\theta
 =w(\rho,-\pi)$ we see
$w(\rho,-\pi)\leq 2\pi \sup_{C(\rho)} u_\theta.$ Using the
oscillation bound \eqref{uThetaoscBnd} then gives the lower bound:
\begin{equation}
\label{uThetaBnds} (1 -2\pi C\epsilon) w(\rho,-\pi) -2\pi
C\rho^{-1/4} \leq 2\pi \inf_{C(\rho)}u_\theta .
\end{equation}
Pick $\epsilon_0$ so that $2\pi C \epsilon_0\leq 1/2$. Integrating
\eqref{MainInEq21} yields $w(\rho,\theta) \geq
w(1,\theta)\rho^{-\epsilon} \geq C_2\rho^{-\epsilon}$.  Thus,
\begin{equation}
 \inf_{C(\rho)}u_{\theta} \geq
 \frac{ C_2}{4\pi} \rho^{-\epsilon} -C\rho^{-1/4}.
\end{equation}
Since $\epsilon < 1/4$, just choose $C_3$ large.
\end{proof}

\section{Decomposition of $\Sigma$}
In order to decompose $\Sigma$, we use the explicit asymptotic
properties found above to show that, away from the ``axis," $\Sigma$
consists of two strictly spiraling graphs.  In particular, this
implies that all intersections of $\Sigma$ with planes orthogonal to
the $x_3$-axis have exactly two ends.  The proof of Rado's theorem
\cite{RadoThm} then gives that $\nabla_\Sigma x_3$ is non-vanishing
and so each level set consists of one unbounded smooth curve.  A
curvature estimate and a Harnack inequality then give the lower
bound on $|\nabla_\Sigma x_3|$.

\subsection{Two Technical Lemmas}
Presently, we know that near a blow-up pair there exist two associated $4$-valued $\epsilon$-sheets on some multiple of the blow-up scale.  Moreover, the one-sided curvature estimate guarantees that for any two blow-up pairs, the part of $\Sigma$ between the associated $\epsilon$-sheets and far from the blow-up pairs consists of two multivalued graphs with good gradient bounds.  However, in order to determine the region $\mathcal{R}_S$, we must ensure that each of these ``in between" sheets is an $\epsilon$-sheet on a suitable scale.  To do so, we will need two
technical lemmas. The first gives a  bound on the number of sheets between the blow-up sheets associated to nearby blow-up pairs.  The second will imply that far enough out all of these sheets are $\epsilon$-sheets.

\begin{lem}\label{sheetbnd}
Given $K$, there is an $N$ so that: If $(y_1, s_1)$ and $(y_2,
s_2)$ are $C$ blow-up pairs of $\Sigma$ with
$y_2 \in B_{Ks_1}(y_1)$, then the number of sheets between the associated
 blow-up sheets is at most $N$.
\end{lem}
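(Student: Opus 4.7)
My plan is to bound the vertical extent of the region enclosed by pair $1$'s blow-up sheets inside $B_{Ks_1}(y_1)$, show that any intermediate sheet of $\Sigma$ passing through this region occupies a definite minimum vertical space, and then conclude by a pigeonhole-type count. The main obstacle will be establishing a lower bound $s_2 \geq c(K) s_1$.

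First I would observe that each of pair $1$'s blow-up sheets $\Sigma_i^1$ ($i=1,2$) lies in the cone $\mathbf{C}_\epsilon(y_1)$ by Definition \ref{weakepsilonsht}. Consequently, inside $B_{Ks_1}(y_1)$ the sheets $\Sigma_1^1\cup\Sigma_2^1$, and hence the enclosed region $E_1$ (as in \eqref{Edefn}), are trapped in a horizontal slab of vertical extent at most $2\epsilon K s_1$. Next, I would invoke Theorem \ref{CM4I010Thm} together with the one-sided curvature estimate (Corollary \ref{osccor}) to see that inside this slab $\Sigma$ consists of multivalued graphs with small gradient, and that embeddedness forces consecutive turns of such a graph to be vertically separated by at least their local separation function $w$. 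For any turn coming from pair $2$'s blow-up sheets $\Sigma_i^2$ this yields a vertical thickness of order $s_2$, up to the sub-polynomial corrections dictated by \eqref{MainInEq21}.

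The hard part is to produce the lower bound $s_2 \geq c(K) s_1$. I expect to argue that on the graphical portion of $\Sigma$ inside $E_1\cap B_{Ks_1}(y_1)$ the one-sided curvature estimate upgrades the gradient bound from \eqref{MainInEq21} to a pointwise curvature bound $|A|^2 \leq \tilde{C}(K)/s_1^2$. If $y_2$ lies in this graphical region, then the identity $|A|^2(y_2) = C^2/s_2^2$ coming from Definition \ref{defbup} immediately forces $s_2 \gtrsim s_1$ with a constant depending only on $K$. If instead $y_2$ lies very near the axis of large curvature associated with pair $1$, then by Corollary \ref{CM3III35Cor} and the spiraling structure of Theorem \ref{CM204Thm} pair $2$'s blow-up sheets are already part of pair $1$'s axis structure on scale $s_1$, and a direct geometric count of the blow-up sheets produced by that structure inside $B_{Ks_1}(y_1)$ suffices.

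Once $s_2 \geq c(K) s_1$ is in hand, dividing the vertical extent $2\epsilon K s_1$ of $E_1\cap B_{Ks_1}(y_1)$ by the minimum vertical spacing of order $c(K)C_2 s_1$ between distinct sheets of $\Sigma$ in $E_1$ yields a uniform bound $N=N(K)$ on the number of intermediate sheets, completing the argument.
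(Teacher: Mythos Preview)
Your pigeonhole strategy rests on a uniform lower bound for the vertical separation of \emph{every} intermediate sheet, but you only argue such a bound for the specific blow-up sheets of pair~2. The sheets lying between pair~1's and pair~2's blow-up sheets are generic turns of the ambient multivalued graph of $\Sigma$; nothing you have said forces their separation $w$ to be comparable to $s_1$. The Harnack inequality for $w$ coming from \eqref{MainInEq21} only controls the ratio of separations between \emph{adjacent} turns, so after $N$ iterations the bound has degraded by an exponential factor and the argument becomes circular. Indeed, the constraints that $w_0$ and $w_{N-1}$ are bounded below by a multiple of $s_1$, that adjacent ratios are bounded, and that $\sum_k w_k$ is bounded above by a multiple of $Ks_1$, do not together bound $N$: the $w_k$ may dip geometrically toward the middle and climb back, keeping the sum bounded while $N\to\infty$. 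Your case analysis for $s_2\geq c(K)s_1$ is also incomplete---the one-sided curvature estimate applies only in the cone complement, and the ``axis'' case is left as a hand-wave---though this particular step can be repaired by invoking Lemma~2.26 of \cite{CY}, which bounds $|A|^2$ uniformly on all of $B_{Ks_1}(y_1)\cap\Sigma$.

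The paper avoids the separation issue entirely by counting area rather than height. Each intermediate sheet is a graph over an annulus of inner radius comparable to $s_1$ and so contributes area at least a fixed multiple of $s_1^2$, regardless of how close it sits to its neighbors. Hence it suffices to bound the area of $\Sigma\cap B_{C'Ks_1}(y_1)$, and this follows in three steps: the chord-arc bounds of \cite{CY} place the extrinsic ball inside an intrinsic ball $\mathcal{B}_{\gamma K s_1}(y_1)$; Lemma~2.26 of \cite{CY} gives a uniform curvature bound there; and area comparison for surfaces of bounded curvature then bounds the intrinsic area.
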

\begin{proof}  Note that for a large, universal constant $C'$ the area of $B_{C' K s_1}(y_1)\cap \Sigma$ gives a bound on $N$, so it is enough to uniformly bound this area. The chord-arc
bounds of \cite{CY} give a uniform constant $\gamma$ depending only
on $C'$ so that $B_{C'Ks_1}(y_1)\cap \Sigma$ is contained in
$\mathcal{B}_{\gamma K s_1}(y_1)$ the intrinsic ball in $\Sigma$ of
radius $\gamma K s_1$.  Furthermore, Lemma 2.26 of \cite{CY} gives a
uniform bound on the curvature of $\Sigma$ in $\mathcal{B}_{\gamma K
s_1}(y_1)$ and hence, by area comparison, a uniform bound on the area of
$\mathcal{B}_{\gamma K s_1}(y_1)$.  Since $B_{C'Ks_1}(y_1) \cap \Sigma \subset \mathcal{B}_{\gamma K s_1}(y_1)$ it
also has uniformly bounded area.
\end{proof}

We now prove the extension property.  Essentially, the existence
of a single $\epsilon$-sheet and the one-sided curvature estimate
imply that outside of a wide cone $\Sigma$ consists of the union of
weak $\epsilon$-sheets.  Thus, an initial $4$-valued $\epsilon$-sheet extends to an $N$-valued $\epsilon$-sheet on a fixed multiple of the initial scale. Results along these lines can be
found in Section 5 of \cite{MMGPD} and Section II.3 of \cite{CM3}.
\begin{lem}
\label{thm211} There exists $\epsilon_0>0$ so: Given $N>4$ and $\epsilon_0>\epsilon>0$ there exists a $\tilde{R}=\tilde{R}(\epsilon,N)>1$ so that if $\Sigma$ contains a 4-valued
$\epsilon$-sheet, $\Sigma_0$, centered at $0$ and on scale $s$ then there exists a $N$-valued $\epsilon$ sheet on scale $\tilde{R}s$, $\Sigma_1\subset \Sigma$.  Moreover, $\Sigma_1$ may be chosen so its $4$-valued middle sheet contains $\Sigma_0\backslash \set{x_1^2+x_2^2\leq \tilde{R}^2s^2}$.
\end{lem}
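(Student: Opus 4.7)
The plan is to use the given 4-valued sheet $\Sigma_0 = \Gamma_{u_0}$ as a barrier, combine the one-sided curvature estimate with the gradient decay \eqref{GradDecay} to build a weak multi-valued sheet with many additional turns above and below $\Sigma_0$, and then apply Proposition \ref{WeakToEpsilonShtPrp} to upgrade this to a strong $N$-valued $\epsilon$-sheet. To set parameters, let $N_0$ and $C$ be the constants given by Proposition \ref{WeakToEpsilonShtPrp} applied with parameters $N$ and $\epsilon$, and let $\delta_0 > 0$ be the constant from the one-sided curvature estimate (Corollary \ref{osccor}) corresponding to parameter $\epsilon/C$. By \eqref{GradDecay}, there exists $R_1 = R_1(\epsilon, N) > 1$ such that $|\nabla u_0| \leq \delta_0/2$ on $\pRect{R_1 s}{\infty}{-3\pi}{3\pi}$ and $\Sigma_0 \setminus B_{R_1 s}$ is contained in $\mathbf{C}_{\delta_0}$.

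Next I would produce additional graphical turns of $\Sigma$ above and below the outer portion of $\Sigma_0$. The one-sided curvature estimate, applied with this nearly-flat outer portion as barrier, shows that any component of $\Sigma$ lying in a thin slab above (resp.\ below) $\Sigma_0$ outside $\mathbf{C}_{\delta_0}$ has uniformly bounded second fundamental form, and hence, after the standard graphical conversion, is a graph over an annular region with gradient at most $\epsilon/C$. Because $\Sigma$ is a disk, is embedded, and the separation of $\Sigma_0$ is uniformly bounded below, these graphical components cannot be isolated: they must instead attach to $\Sigma_0$ as further turns of a single multi-valued graph. Iterating outward, using each newly found outermost sheet as a barrier for the next, produces on some scale $R_2 s$ (with $R_2 = R_2(\epsilon, N)$) a weak $N_0$-valued $\epsilon/C$-sheet $\tilde\Sigma \subset \Sigma$ centered at $0$ whose middle sheets contain $\Sigma_0 \setminus \set{x_1^2+x_2^2 \leq R_2^2 s^2}$. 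This construction closely parallels the arguments of Section~II.3 of \cite{CM3} and Section~5 of \cite{MMGPD}.

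Finally, I would apply Proposition \ref{WeakToEpsilonShtPrp} to $\tilde\Sigma$, obtaining an $N$-valued $\epsilon$-sheet $\Sigma_1$ on scale $\tilde R s := C R_2 s$. Since the outer portion of $\Sigma_0$ -- already contained in $\tilde\Sigma$'s middle sheets -- has horizontal asymptotic tangent plane ($\Sigma_0$ being itself a strong $\epsilon$-sheet), no rotation is needed, and the 4-valued middle sheet of $\Sigma_1$ contains $\Sigma_0 \setminus \set{x_1^2+x_2^2 \leq \tilde R^2 s^2}$, yielding the moreover statement. The hard part will be the extension step: ruling out the possibility that the graphical components adjacent to $\Sigma_0$ are disconnected pieces rather than further turns of a single multi-valued graph. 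This is handled by exploiting the disk topology of $\Sigma$, its embeddedness, and the barrier property of $\Sigma_0$ via the maximum principle for the minimal surface equation between consecutive would-be sheets.
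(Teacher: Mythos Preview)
Your overall plan --- one-sided curvature estimate to get graphicality, then Proposition \ref{WeakToEpsilonShtPrp} to upgrade --- matches the paper's. But you have misidentified the hard step and left the essential quantitative estimate unaddressed.

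Ruling out disconnected graphical pieces is not the difficulty: Corollary \ref{osccor} already asserts that each component of $\Sigma$ in the cone complement is a multivalued graph, so the component meeting $\Sigma_0$ is automatically the graph of a single $u$ over some polar domain $\Omega_0$, with no need to invoke disk topology or a maximum-principle argument between consecutive sheets. The genuine issue is showing that $\Omega_0$ (more precisely, the subdomain on which $|\nabla u|\leq \epsilon/C$) contains $\pRect{\tilde{R}/C}{\infty}{-(N+N_0)\pi}{(N+N_0)\pi}$ for some $\tilde{R}$ depending \emph{only} on $\epsilon$ and $N$. Your ``iterating outward'' sketch does not produce such a bound: each new sheet you adjoin could sit at an uncontrolled height above the last, so the scale after $N_0$ steps might depend on the particular $\Sigma$. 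Your appeal to ``the separation of $\Sigma_0$ is uniformly bounded below'' is neither among the hypotheses nor a consequence of the definition of an $\epsilon$-sheet, and in any case a lower bound on $w$ is not what is needed here.

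The paper supplies the missing quantitative input via a Harnack inequality for the separation $w$, which is positive and satisfies a uniformly elliptic equation on $\Omega_0$. On the middle sheet one integrates \eqref{MainInEq21} to get the \emph{upper} bound $|w|(\rho,0)\leq 2\epsilon\rho^\epsilon$; Harnack then propagates this, up to a fixed multiplicative factor $\tilde{C}_H$ per sheet, across the $N(\rho)$ sheets present at radius $\rho$. Since those sheets must fill the cone $\mathbf{C}_{\delta_1}$ --- the graph $u$ over $\Omega_0$ has boundary on $\partial\mathbf{C}_\epsilon$, hence extends past $\partial\mathbf{C}_{\delta_1}$ --- their total separation is at least $\delta_1\rho/2$, forcing $\delta_1\rho/2 \leq 4\epsilon\rho^\epsilon \tilde{C}_H^{2N(\rho)}$. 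Solving gives $N(\rho)\gtrsim\log\rho$ with constants depending only on $\epsilon$, which yields the required $\tilde{R}=\tilde{R}(\epsilon,N)$.
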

\begin{proof}
By rescaling we may assume that $s=1$.
For $\epsilon_0$ sufficiently small,
Corollary \ref{osccor} guarantees that the component of $\Sigma \cap
\mathbf{C}_{\epsilon}\backslash B_2$ meeting $\Sigma_0$ is the
graph of some function $u$ defined on a polar domain $\Omega_0\subset \Real^+\times \Real$ with
$|\nabla u| \leq 1$.  Moreover, this gradient bound, together with the curvature bound, implies that $|\Hess_u|\leq C_0/\rho$ (here $C_0$ is determined by the one-sided curvature estimate).  Notice that by assumption we may normalize the angular coordinate so $\pRect{2}{\infty}{-4\pi}{4\pi}\subset \Omega_0$. Set $\Omega_1 = \Phi_{u}^{-1}
(\mathbf{C}_{\epsilon/2}\backslash B_3)$. The
 distance (as subsets of $\Real^3$) between $ \mathbf{C}_{\epsilon/2} \backslash B_3$ and
$\partial \mathbf{C}_{\epsilon} \backslash B_2$ is bounded below by $\epsilon/10$. Because $\Phi_u(\Omega_0)$ is a graph with gradient bounded by $1$, the intrinsic distance (in $\Sigma$) between $\partial \Phi_u(\Omega_0)$ and $\partial \Phi_u(\Omega_1)$ is bounded below by $2d_1=\epsilon/50$.  Thus, as $\Phi_u$ is a distance non-decreasing map,
$\dist(\partial \Omega_0, \Omega_1) \geq 2d_1$.

The separation, $w$, of $u$ is defined on $\tilde{\Omega}_i=\Omega_i\cap \left(\Omega_i-(0,2\pi)\right)$ and $\dist(\partial \tilde{\Omega}_0,\tilde{\Omega}_1)\geq 2d_1$.
Because the gradient and hessian of $u$ are bounded, standard computations give that $w$ solves a uniformly elliptic and uniformly bounded second order equation in $\tilde{\Omega}_0$ (see Section 3 of \cite{MMGPD}). As $w>0$, we have a Harnack inequality, e.g. Theorems 9.20 and 9.22 of \cite{GiTr}. Thus,
for all $x \in \tilde{\Omega}_1$,
\begin{equation}\label{Harnack}
\sup_{B_{d_1}(x)} w \leq C_H \inf_{B_{d_1}(x)} w
\end{equation}
where $C_H>2$ depends only on $d_1$ and $C_0$.

Now given $\epsilon$, pick $N_0$ and $C$ from Proposition
\ref{WeakToEpsilonShtPrp}.
With $\alpha=\epsilon/C$, pick $\delta_1$ as in the remark following Corollary \ref{osccor}.  We are free to shrink $\delta_1$, and so assume $\delta_1\leq\epsilon/C$.
By \eqref{GradDecay}, there exists $R_0= R_0(\epsilon)$ such that $\pRect{R_0}{\infty}{-2\pi}{2\pi}\subset \Omega_1$ and the graph of $u$ on this polar rectangle is a 2-valued weak $\delta_1$-sheet on scale $R_0$. Thus, on
$\Omega_2= \Phi_{u}^{-1} (\mathbf{C}_{\delta_1}\backslash
B_{2R_0})$, $|\nabla u| \leq \epsilon/C$.
Thus, by Proposition \ref{WeakToEpsilonShtPrp} we need only find an $\tilde{R}>R_0$ so that $\pRect{\tilde{R}/C}{\infty}{-(N+N_0)\pi}{(N+N_0)\pi}\subset \Omega_2$. 

By hypothesis, $|w|(1,0)\leq 2\epsilon$.  Thus, integrating \eqref{MainInEq21}, gives, $|w|(\rho,0) \leq 2\epsilon\rho^\epsilon$. By increasing $R_0$, if necessary, we can assume $\Phi_u (\pRect{R_0}{\infty}{-2\pi}{2\pi}) \subset \mathbf{C}_{\delta_1/2}\backslash B_{R_0}$ -- recall the middle 4-valued sheet of $u$ is actually an $\epsilon$-sheet and thus satisfies \eqref{GradDecay}. Define $N^\pm(\rho)$ to be the number of sheets, at radius $\rho$, in $\Omega_2$ between the $\theta=0$ sheet and the top (respectively bottom) of $\Omega_2$. Note that by assumption, the $\theta=0$ sheet lies in $\mathbf{C}_{\delta_1/2}$.
We claim there exists $R_1>R_0$ so $N^\pm(\rho)\geq N_0+N$ for all $\rho
\geq R_1$.
To that end, iterated application of \eqref{Harnack}, gives
\begin{equation}\label{omegaharn}
|w|(\rho,\theta) \leq 2\epsilon \rho^\epsilon C_H^{2\pi N(\rho)/d_1}=2\epsilon \rho^\epsilon \tilde{C}_H^{N(\rho)}
\end{equation}
for $(\rho,\theta)\in\tilde{\Omega}_2$. Without loss of generality, we treat only $N(\rho)=N^+(\rho)$.  
Then,
\begin{equation*}
  \delta_1 \rho/2 \leq \sum_{k=1}^{N(\rho)} w(\rho,2\pi N(\rho)-2\pi k)
  \leq 2\epsilon \rho^\epsilon  N(\rho) \tilde{C}_H^{N(\rho)} 
  \leq  4\epsilon \rho^\epsilon   \tilde{C}_H^{2 N(\rho)} 
\end{equation*}
where the last inequality comes from the fact that $x b^x \leq
2 b^{2x}$ for $x\geq 0$ when $b>2$.  Since $\epsilon<1/2$,
$\rho^{1-\epsilon} \leq 4\tilde{C}_H^{2 N(\rho)}$.
As $\tilde{C}_H$ depends only on $\epsilon$, for $R_1=R_1(\epsilon,N)$ sufficiently large, one has for $\rho\geq R_1$ that $N(\rho)\geq N_0+N$.  Thus, set $\tilde{R}=C R_1$.
\end{proof}

\subsection{Decomposition}
To prove Theorem \ref{FirstMainThm} we first construct
$\mathcal{R}_S$.
\begin{lem}
\label{DecompFrstLem} There exist constants $C_1,R_1$ and a
sequence $(y_i,s_i)$ of $C_1$ blow-up pairs of $\Sigma$ so that:
$x_3(y_i)<x_3(y_{i+1})$ and for $i\geq 0$, $y_{i+1}\in B_{R_1 s_i}
(y_i)$ while for $i<0$, $y_{i-1}\in B_{R_1 s_i}(y_i)$.  Moreover,
if $\tilde{\mathcal{R}}_A$ is the connected component of $\bigcup_i B_{R_1
s_i}
 (y_i)\cap \Sigma$ containing $y_0$ and
$\tilde{\mathcal{R}}_S=\Sigma\backslash \tilde{\mathcal{R}}_A$, then
$\tilde{\mathcal{R}}_S$ has exactly two unbounded components, which are
(oppositely oriented) strictly spiraling,
 multivalued
graphs $\Sigma^1$ and $\Sigma^2$.  In particular,
$\nabla_\Sigma x_3 \neq 0$ on the two graphs.
\end{lem}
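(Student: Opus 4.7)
My plan has three steps: construct a bi-infinite axial sequence of blow-up pairs, glue the associated pairs of local $\epsilon$-sheets into two coherent global multivalued graphs, and then verify the strict spiraling together with the two-component description of $\tilde{\mathcal{R}}_S$.

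First, take $C_1$ as in Theorem \ref{TwoInitShtExstThm}; Lemma \ref{CM251Lem} produces an initial $C_1$ blow-up pair $(y_0,s_0)$, and iterative application of Theorem \ref{BlowUpPrsThm} (once up, once down, at each step) supplies a bi-infinite sequence of $C_1$ blow-up pairs $(y_i,s_i)$ satisfying the ordering and containment claims with $R_1:=C_{in}$. For each $i$, Theorem \ref{TwoInitShtExstThm} provides a pair of $4$-valued $\epsilon$-sheets $\Sigma_i^1,\Sigma_i^2$ on scale $s_i$ with initial separation at least $C_2 s_i$; Lemma \ref{thm211} then extends each to an $N$-valued $\epsilon$-sheet $\tilde{\Sigma}_i^j$ on scale $\tilde{R}(\epsilon,N)s_i$, with $N$ picked large enough (using the a priori bound supplied by Lemma \ref{sheetbnd} with $K=R_1$) that the middle $4$-valued portion of each extension comfortably encloses the blow-up sheets at the neighboring pairs $(y_{i\pm 1},s_{i\pm 1})$.

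The heart of the argument is then to glue: because $y_{i+1}\in B_{R_1 s_i}(y_i)$, Lemma \ref{sheetbnd} bounds the number of sheets of $\Sigma$ sitting between $\tilde{\Sigma}_i^j$ and $\tilde{\Sigma}_{i+1}^j$; combined with the strict vertical ordering of the pair $(\Sigma_i^1,\Sigma_i^2)$ and the one-sided curvature estimate (which forbids extra sheets in the ambient cone), this forces the upper (respectively lower) blow-up sheet at $y_i$ to continue as the upper (respectively lower) blow-up sheet at $y_{i+1}$. The unions $\Sigma^j:=\bigcup_i \tilde{\Sigma}_i^j$ therefore assemble into two globally defined infinite-sheeted multivalued graphs in the sense of the remark following Theorem \ref{FirstMainThm}.

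For the strict spiraling, integrating \eqref{MainInEq21} propagates the initial bound $w(s_i,\theta)\ge C_2 s_i$ to a lower bound of the form $|w|(\rho,\theta)\ge c\, s_i^{1+\epsilon}\rho^{-\epsilon}$ on each $\tilde{\Sigma}_i^j$, enough separation to invoke Proposition \ref{uThetaLowBndPrp} and deduce that $u_\theta^j$ has one sign on each sheet. Hence $\Sigma^1,\Sigma^2$ are strictly spiraling (oppositely oriented since they lie on opposite sides of the axis), and since $u_\theta^j\neq 0$ implies $\nabla u^j\neq 0$, we get $\nabla_\Sigma x_3\neq 0$ on $\Sigma^1\cup\Sigma^2$. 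The component count then follows because Lemma \ref{thm211} and the one-sided curvature estimate together show that, outside a bounded neighborhood of $\{y_i\}$, every point of $\Sigma$ lies in some $\tilde{\Sigma}_i^j$; thus $\tilde{\mathcal{R}}_S\setminus(\Sigma^1\cup\Sigma^2)$ is bounded and $\Sigma^1,\Sigma^2$ are the only unbounded components of $\tilde{\mathcal{R}}_S$. The principal obstacle is the gluing step: a priori the extensions $\tilde{\Sigma}_i^j$ and $\tilde{\Sigma}_{i+1}^j$ could drift apart or accommodate unexpected additional sheets of $\Sigma$. Ruling this out requires the simultaneous use of the sheet-counting bound from Lemma \ref{sheetbnd}, the large-scale extension from Lemma \ref{thm211}, and the one-sided curvature estimate, in order to force the unique coherent ``top-to-top, bottom-to-bottom'' identification.
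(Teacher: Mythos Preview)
Your overall architecture matches the paper's, but there is a genuine gap in how you choose $R_1$. You set $R_1:=C_{in}$ at the outset, using only the constant coming from Theorem \ref{BlowUpPrsThm}. With that choice, the conclusion that the unbounded components of $\tilde{\mathcal{R}}_S$ are \emph{strictly} spiraling is not justified: Proposition \ref{uThetaLowBndPrp} only gives $u_\theta>0$ once $\rho\ge C_3$ (after rescaling to unit scale), so on the annulus between the inner radius of your extended $\epsilon$-sheet $\tilde{\Sigma}_i^j$ (namely $\tilde{R}s_i$) and radius $C_3\tilde{R}s_i$ you have no control on the sign of $u_\theta$. If $C_{in}<C_3\tilde{R}$, those annuli lie in $\tilde{\mathcal{R}}_S$, and your strict-spiraling claim fails there. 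Your sentence ``enough separation to invoke Proposition \ref{uThetaLowBndPrp} and deduce that $u_\theta^j$ has one sign on each sheet'' silently assumes the conclusion holds all the way to the inner boundary of the sheet, which the proposition does not give.

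The paper fixes exactly this: $R_1$ is not taken equal to $C_{in}$ but is chosen \emph{after} $\tilde{R}$ and $C_3$, large enough (via the chord-arc bounds of \cite{CY}) that the component of $B_{R_1 s_i}(y_i)\cap\Sigma$ containing $y_i$ swallows the entire solid cylinder of radius $C_3\tilde{R}s_i$ together with $y_{i\pm1}$ and meets every intermediate sheet $\Sigma_j^i$. With that choice the region where Proposition \ref{uThetaLowBndPrp} is silent is absorbed into $\tilde{\mathcal{R}}_A$, and what remains in $\tilde{\mathcal{R}}_S$ is automatically strictly spiraling. This also makes your separate ``gluing'' step unnecessary: once the balls $B_{R_1 s_i}(y_i)$ are large enough to overlap and to meet all the intermediate $\epsilon$-sheets, the two unbounded pieces of the complement are forced to be connected multivalued graphs without any top-to-top identification argument. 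So the remedy is simple: postpone the definition of $R_1$ until after you have $\tilde{R}$, $N$, and $C_3$, and then choose it large enough to cover the inner cylinder; the rest of your outline then goes through essentially as written.
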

\begin{proof}
Fix $\epsilon<\epsilon_0$ where $\epsilon_0$ is smaller than the constant given by Theorem \ref{TwoInitShtExstThm}, 
Proposition \ref{uThetaLowBndPrp}, and Lemma \ref{thm211}.  Using this $\epsilon$, from
Theorem \ref{TwoInitShtExstThm} we obtain the blow-up constant
$C_1$ and denote by $C_2$ the lower bound on initial separation.
Suppose $0\in \Sigma$ and that $(0,1)$ is a $C_1$ blow-up pair.
From Theorem \ref{BlowUpPrsThm} there exists a constant $C_{in}$
so that there are $C_1$ blow-up pairs $(y_+,s_+)$ and $(y_-,s_-)$
with $x_3(y_-)<0<x_3(y_+)$ and $y_\pm \in B_{C_{in}}$. Note by
Lemma \ref{sheetbnd} that there is a fixed upper bound $N$
on the number of sheets between the blow-up sheets associated to
$(y_\pm,s_\pm)$ and the sheets $\Sigma^i_0$ ($i=1,2$) associated
to $(0,1)$.

By Theorem \ref{thm211}, there exists an $\tilde{R}$ so
that all the $N$ sheets above and the $N$ sheets below
$\Sigma^i_0$ are $\epsilon$-sheets centered on the $x_3$-axis on
scale $\tilde{R}$. Call these pairs of 1-valued sheets $\Sigma_j^i$, and their associated graphs $u_j^i$, with
$-N \leq j \leq N$. Integrating \eqref{MainInEq21}, we obtain from
$C_2$ and $N$ a value, $C_2'$, so that for all $\Sigma_i^j$, the
separation over $\partial D_{\tilde{R}}$ is bounded below by $C_2'$.
The non-vanishing of the right hand side of \eqref{uThetaLowBnd} is
scaling invariant, so there exists a $C_3$ such that: on each
$\Sigma_j^i$, outside of a cylinder centered on the $x_3$-axis of
radius $\tilde{R} C_3$, $(u^i_j)_\theta\neq 0$.  The chord-arc bounds of
\cite{CY} (i.e. Theorem
 0.5) then allow us to pick $R_1$ large enough so the component of
 $B_{R_1}\cap \Sigma$ containing $0$ contains this cylinder, the points
 $y_+,y_-$ and meets each $\Sigma^i_j$.
Crucially, all the statements in the theorem are
invariant under rescaling.  Thus, to finish the proof, we note that once
we find an initial blow-up pair, we can use Theorem
\ref{BlowUpPrsThm} to iteratively construct a sequence of $C_1$
blow-up pairs $(y_k,s_k)$ and choose $R_1$ as described above. As
$\Sigma$ is not flat, there is an $r_0$ so that
$\sup_{B_{r_0} \cap \Sigma}|A|^2 \geq 16 C_1^2 r_0^{-2}.$ By Lemma
\ref{CM251Lem}, this gives the existence of an initial blow-up pair.
\end{proof}
With the given decomposition, we now show that each level set of
$x_3$, outside of a large ball,
consists of exactly two proper curves.  
\begin{lem} \label{RadoPrepLem}
For all $h$, there exist $\alpha,\rho_0>0$ so that for all
 $\rho>\rho_0$ the
set
 $\Sigma\cap \set{x_3=c}\cap \set{x_1^2+x_2^2 =\rho^2}$ consists of
 exactly two points for $|c-h|\leq \alpha$.
\end{lem}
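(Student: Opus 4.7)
The plan is to exploit Lemma \ref{DecompFrstLem}, which writes $\Sigma = \tilde{\mathcal{R}}_A \cup \Sigma^1 \cup \Sigma^2$ modulo possibly bounded components of $\tilde{\mathcal{R}}_S$. I verify the claim in two steps: first, that for $|c-h|\le \alpha$ and $\rho$ large, the circle $\{x_3=c,\ x_1^2+x_2^2=\rho^2\}$ avoids $\tilde{\mathcal{R}}_A$ and every bounded component of $\tilde{\mathcal{R}}_S$; second, that each unbounded graph $\Sigma^i$ meets this circle in exactly one point.

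For the first step, I would show that $\tilde{\mathcal{R}}_A$ lies inside a uniform double cone about the $x_3$-axis. The iterative application of Theorem \ref{BlowUpPrsThm} used in the proof of Lemma \ref{DecompFrstLem} produces blow-up pairs satisfying $y_{i+1}-y_i\notin \mathbf{C}_\delta(y_i)$ and $s_{i+1}\le \gamma |y_{i+1}|$. Because the sequence is monotone in $x_3$ on each side of $y_0$, summing the relative cone inequalities and applying the triangle inequality yields
\[
\sqrt{x_1(y_i)^2+x_2(y_i)^2} \le \frac{|x_3(y_i)|}{\delta} + R_*,
\]
with $R_*$ a constant depending only on the finite initial data. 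Combined with $s_i\le \gamma |y_i|$, the union $\bigcup_i B_{R_1 s_i}(y_i)$ is contained in an enlarged double cone $\{x_3^2\ge (\delta')^2(x_1^2+x_2^2)\}$ outside some fixed compact set. Consequently there is $\rho_0=\rho_0(h,\alpha)$ so that for $|c-h|\le \alpha$ and $\rho\ge \rho_0$, the circle lies in the complement of this region; enlarging $\rho_0$ to clear the bounded components of $\tilde{\mathcal{R}}_S$, the circle meets $\Sigma$ only inside $\Sigma^1\cup\Sigma^2$.

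For the second step, write $\Sigma^i=\Gamma_{u^i}$. By Lemma \ref{DecompFrstLem}, strict spiraling gives $u^i_\theta\neq 0$; applying Proposition \ref{uThetaLowBndPrp} after the implicit rescaling yields $|u^i_\theta(\rho,\theta)|\ge c_1\rho^{-\epsilon}$ for all large $\rho$, and integrating \eqref{MainInEq21} yields the matching bound $|w^i(\rho,\theta)|\ge c_2\rho^{-\epsilon}>0$ on the separation. Hence $\theta\mapsto u^i(\rho,\theta)$ is continuous and strictly monotone on $\mathbb{R}$ and, because successive periods contribute at least $c_2 \rho^{-\epsilon}$ to its value, diverges to $\pm\infty$; it is therefore a bijection $\mathbb{R}\to\mathbb{R}$. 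For each $c$ there is thus a unique $\theta^*\in\mathbb{R}$ with $u^i(\rho,\theta^*)=c$, producing the unique intersection point $(\rho\cos\theta^*,\rho\sin\theta^*,c)$ of $\Sigma^i$ with the circle. Since $\Sigma^1$ and $\Sigma^2$ are disjoint, the total intersection contains exactly two points, uniformly in $|c-h|\le\alpha$. The main obstacle is confining $\tilde{\mathcal{R}}_A$ to a uniform cone about the $x_3$-axis; this rests essentially on combining the cone condition $y_{i+1}-y_i\notin \mathbf{C}_\delta(y_i)$ with the proportional scale bound $s_{i+1}\le \gamma |y_{i+1}|$ from Theorem \ref{BlowUpPrsThm}.
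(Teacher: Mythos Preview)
Your argument is correct and parallels the paper's in structure: first show that for large $\rho$ the circle $\{x_3=c\}\cap\{x_1^2+x_2^2=\rho^2\}$ meets $\Sigma$ only in $\Sigma^1\cup\Sigma^2$, then use strict spiraling to count. The difference lies in the first step. The paper fixes a point $0$ of nonzero curvature on the level set and argues by contradiction: if the level set met $\tilde{\mathcal{R}}_A$ at arbitrarily large $\rho$, then (since the $y_i$ lie in a cone) one would have $0\in B_{\delta R_1 s_i}(y_i)$ along a subsequence with $s_i\to\infty$, and the curvature bound of Lemma~2.26 of \cite{CY} forces $|A|^2(0)=0$. You instead show directly that $\tilde{\mathcal{R}}_A$ is contained in a uniform cone about the $x_3$-axis. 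This is more elementary---it avoids both the appeal to \cite{CY} and the choice of a special reference point---but it requires the additional estimate $s_i\le C|x_3(y_i)|$ to fatten the cone containing the $y_i$ to one containing the balls $B_{R_1 s_i}(y_i)$. One correction: Theorem~\ref{BlowUpPrsThm} gives $s_{\pm}\le\gamma|y_{\pm}|$ with $|y_\pm|$ measured from the \emph{current} blow-up point, so iteratively $s_{i+1}\le\gamma|y_{i+1}-y_i|$ rather than $\gamma|y_{i+1}|$; your conclusion still follows since $|y_{i+1}-y_i|\le C_{in}s_i$ while $|x_3(y_i)-x_3(y_0)|$ dominates a constant times $\sum_{j<i}s_j$. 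Finally, the paper secures nonemptiness of the intersection via the strong half-space theorem, whereas you deduce it from surjectivity of $\theta\mapsto u^i(\rho,\theta)$; your route is clean under the paper's simplifying convention that each $\Sigma^i$ is a single $\infty$-valued $\epsilon$-sheet, but would need more care in the general case of varying centers and scales.
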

\begin{proof}
First note, for $\rho_0$ large, the intersection is never empty by
the strong half-space theorem \cite{HoffmanMeeksHalfSpace}, the
 maximum principle
and because $\Sigma$ is properly embedded.  Without loss of generality we may
assume $h=0$ with $0\in Z^0=\Sigma\cap\set{x_3=0}$ and $|A|^2(0)\neq
0$.  Let $R_1$ and the set of blow-up pairs be given by Lemma
\ref{DecompFrstLem} and $\Sigma^i$ the unbounded components of $\tilde{\mathcal{R}}_S$. There then exists $\rho_0$ so for $2
\rho>\rho_0$, $\set{x_1^2+x_2^2 =\rho^2}\cap Z^0$ lies in the set
$\Sigma^1\cup \Sigma^2$. If no such $\rho_0$ existed then, since the blow-up
pairs lie within a cone, there would exist $\delta>0$ and a subset
of the blow-up pairs $(y_i,s_i)$ so $0\in  B_{\delta R_1
s_{i}}(y_{i})$. However, Lemma 2.26 of \cite{CY}, with $K_1=\delta
R_1$, would then imply $|A|^2(0)\leq K_2 s_{i}^{-2}$ for all $i$,
i.e. $|A|^2(0)=0$, a contradiction.  Now, for some small $\alpha$
and $\rho>\rho_0$, $Z^c\cap \set{x_1^2+x_2^2=\rho^2}$ lies in
$\Sigma^1\cup \Sigma^2$ for all $|c|<\alpha$, and so
$\set{x_1^2+x_2^2=\rho^2}\cap \set{-\alpha<x_3<\alpha}\cap \Sigma$
consists of the union of the graphs of $u^1$ and $u^2$ over the
circle $\partial D_\rho$, both of which are monotone increasing in
height.
\end{proof}
As $x_3$ is harmonic on $\Sigma$, Proposition \ref{SecondMainThm}
is an immediate
consequence of the previous result and the proof of Rado's theorem.
Recall Rado's theorem \cite{RadoThm} implies that any minimal
surface whose boundary is a graph over the boundary of a convex
domain is a graph over that domain.  The proof of this reduces to
showing that a non-constant harmonic function on a closed disk has
an interior critical point if and only if the level curve of the
function through that point meets the boundary in at least 4 points,
which is exactly what we use.  
We now show Theorem
\ref{FirstMainThm}:
\begin{proof}
First of all, set $\mathcal{R}_S = \Sigma^1 \cup \Sigma^2$, the infinite-valued graphs defined by Lemma \ref{DecompFrstLem}. 
It will then suffice to adjoin the bounded components of $\tilde{\mathcal{R}}_S$ to $\tilde{\mathcal{R}}_A$ and to show that $|\nabla_\Sigma
x_3|$ is bounded below on the new $\mathcal{R}_A$. Suppose that $(0,1)$ is a
blow-up pair. By the chord-arc bounds of \cite{CY}, there exists
$\gamma$ large enough so that the intrinsic ball of radius $\gamma
R_1$ contains $\Sigma \cap B_{R_1}$.  Lemma 2.26 of \cite{CY}
implies that curvature is bounded in $ B_{2\gamma R_1} \cap \Sigma$
by some $K=K(\gamma R_1)$.  The function $v=-2 \log |\nabla_\Sigma
x_3| \geq 0$ is well defined and smooth by Proposition
\ref{SecondMainThm} and standard computations give $\Delta_\Sigma
v=|A|^2$.  Then, since $|\nabla_\Sigma x_3|=1$ somewhere in the
component of $B_1 (0)\cap \Sigma$ containing 0, we can apply a
Harnack inequality (e.g. Theorems 9.20 and 9.22 of \cite{GiTr}) to
obtain an upper bound for $v$ on the intrinsic ball of radius
$\gamma R_1$ that depends only on
 $K$ and $\gamma R_1$. Consequently,
there is a lower bound $\epsilon_0$ on $|\nabla_\Sigma x_3|$ in
$\Sigma \cap B_{R_1}$.  Since this bound is scaling invariant, the
same bound holds around any blow-up pair.  Finally, any bounded
 component, $\Omega$, of $\tilde{\mathcal{R}}_S$ has boundary in $\tilde{\mathcal{R}}_A$ and
 so, since $v$ is subharmonic, $|\nabla_\Sigma x_3|\geq \epsilon_0$ on
 $\Omega$.  Thus, by adjoining all such bounded $\Omega$ to $\tilde{\mathcal{R}}_A$
 we obtain Theorem \ref{FirstMainThm}.
\end{proof}
\section{Concluding Uniqueness}
Since $\nabla_\Sigma x_3$ is non-vanishing and the level sets of
$x_3$ in $\Sigma$ consist of a single curve, the map
$z=x_3+ix_3^*:\Sigma \to \mathbb{C}$ is a global holomorphic
coordinate (here $x_3^*$ is the harmonic conjugate of $x_3$).
Additionally, $\nabla_\Sigma x_3\neq 0$ implies that the normal of
$\Sigma$ avoids $(0,0,\pm 1)$.  Thus, the stereographic projection
of the Gauss map, denoted by $g$, is a holomorphic map $g:\Sigma \to
\mathbb{C}\backslash \set{0}$.  By monodromy, there exists a
holomorphic map $h=h_1+ih_2:\Sigma \to \mathbb{C}$ so that $g=e^h$.
We will use $h$ to show that $z$ is actually a conformal
diffeomorphism between $\Sigma$ and $\mathbb{C}$.  As the same is
then true for $h$, embeddedness and the Weierstrass representation
imply $\Sigma$ is the helicoid.
\subsection{Structure of $h$}
As $\nabla_\Sigma x_3$ is the projection of $\mathbf{e}_3$ onto $T\Sigma$, one can compute $|\nabla_\Sigma x_3|$ by comparing it to the projection of $\mathbf{e}_3$ onto the unit normal of $\Sigma$.  This gives the following relation between $\nabla_\Sigma x_3$, $g$
and $h$:
\begin{equation}
\label{gradx3h} |\nabla_\Sigma x_3| =2 \frac{ |g|}{1+|g|^2}\leq
2e^{-|h_1|}.
\end{equation} An immediate consequence of \eqref{gradx3h} and the
decomposition of Theorem \ref{FirstMainThm} is that there exists
$\gamma_0>0$ so on $\mathcal{R}_A$, $|h_1(z)|\leq \gamma_0$. This
imposes strong rigidity on $h$:
\begin{prop}
\label{hpoly} Let $\Omega_\pm=\set{x\in \Sigma: \pm h_1(x)\geq
2\gamma_0}$ then $h$
 is a proper conformal diffeomorphism from $\Omega_\pm$ onto the closed
 half-spaces $\set{z:\pm\re z\geq 2\gamma_0}$.
\end{prop}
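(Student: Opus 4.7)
Treating only $\Omega_+$ (the case $\Omega_-$ being symmetric), my plan is to show $\Omega_+$ is asymptotically identified with a right half-plane by $h$, using Theorem \ref{FirstMainThm} and the Laurent expansion of Lemma \ref{exclemma}. First, since $|h_1|\leq \gamma_0$ on $\mathcal{R}_A$, one has $\Omega_+\cap\mathcal{R}_A=\emptyset$, so $\Omega_+\subset\mathcal{R}_S=\Sigma^1\cup\Sigma^2$. On a nearly flat graph $\Gamma_u$ the stereographic projection of the Gauss map satisfies either $|g|\to\infty$ or $|g|\to 0$ as $|\nabla u|\to 0$, depending on the choice of orientation. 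Since $\Sigma^1$ and $\Sigma^2$ are oppositely oriented by Theorem \ref{FirstMainThm}, exactly one of them, say $\Sigma^1$, has $|g|\to\infty$ on its unbounded end, while on $\Sigma^2$ one has $h_1\to-\infty$. Only a compact portion of $\Sigma^2$ meets $\Omega_+$, so the unbounded part of $\Omega_+$ lies in $\Sigma^1$.

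Next, apply Lemma \ref{exclemma} to the graph function $u^1$ of $\Sigma^1$ to get $u^1_x-iu^1_y=c\zeta^{-1}+g_{\mathrm{err}}$ with small error; here $c\neq 0$ because $\im(c)$ is (up to a factor $-2\pi$) the residue computing the separation $w$, which is nonzero by embeddedness. The standard stereographic formula for a graph gives $g=-2(u^1_x-iu^1_y)^{-1}+O(|\nabla u^1|)$, so on $\Sigma^1$ for large $\rho$,
\begin{equation*}
h(\rho,\theta)=\log\rho+i\theta+a+o(1)\qquad\text{as }\rho\to\infty,
\end{equation*}
for a fixed constant $a\in\mathbb{C}$ depending on $c$. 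Thus $h$ is asymptotically the natural polar-logarithmic identification of the spiraling graph with a right half-plane, and $\partial\Omega_+=\{h_1=2\gamma_0\}$, outside a compact set, is a smooth curve mapped diffeomorphically onto the vertical line $\{\re z=2\gamma_0\}$.

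With these ingredients, properness of $h|_{\Omega_+}$ follows directly: if $p_n\in\Omega_+$ leaves every compact subset of $\Sigma$, then eventually $p_n\in\Sigma^1$, and in polar coordinates $(\rho_n,\theta_n)$ either $\rho_n\to\infty$ (forcing $h_1(p_n)\to\infty$) or $\rho_n$ remains bounded while $|\theta_n|\to\infty$ (forcing $|h_2(p_n)|\to\infty$). Thus $h\colon\mathrm{int}\,\Omega_+\to\{\re z>2\gamma_0\}$ is a proper holomorphic map of Riemann surfaces, hence a branched covering of some integer degree $d$. Counting preimages of a single point $z_0$ with $\re z_0$ sufficiently large via the asymptotic formula shows there is exactly one, so $d=1$; a proper degree-one holomorphic map is a biholomorphism, and it extends to the boundary homeomorphically by the previous paragraph. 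The main obstacle is making the global degree count rigorous: one must rule out extra connected components of $\Omega_+$, spurious preimages, or critical points of $h$ in the bounded transition region between the asymptotic part of $\Sigma^1$ and $\mathcal{R}_A$, which amounts to combining the global structural decomposition from Theorem \ref{FirstMainThm} with the proper-degree argument.
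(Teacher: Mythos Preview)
Your route---establish properness of $h|_{\Omega_+}$, invoke that proper holomorphic maps are finite branched covers, then count degree asymptotically---is genuinely different from the paper's level-set analysis and is a natural strategy. But it is incomplete, and you say so yourself: the ``main obstacle'' of ruling out extra components of $\Omega_+$, spurious preimages, or critical points in the transition region is precisely the content of the proposition, and you have not resolved it. A proper holomorphic map from a possibly disconnected open surface to the half-plane is a branched cover on each component; knowing that a point $z_0$ with $\re z_0$ large has a unique preimage only shows that \emph{one} component has degree one. It does not show $\Omega_+$ is connected, nor that $h$ has no critical points there. The paper handles exactly this: for a regular value $\gamma>\gamma_0$ it parametrizes a component of $h_1^{-1}(\gamma)$, uses the explicit formula $h_2=\theta+\tilde\theta$ with $\tilde\theta\in(\pi,2\pi)$ (coming from \eqref{gInTermUeqn} and strict spiraling) together with monotonicity of $h_2$ along the level curve (Cauchy--Riemann) to force $\theta$ to run over all of $\Real$; an intersection argument with a level curve of $h_2$ then shows there is only one component, and the proof of Rado's theorem rules out critical values. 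These are the missing ideas in your sketch.

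A secondary gap: the asymptotic $h(\rho,\theta)=\log\rho+i\theta+a+o(1)$ with a fixed constant $a$ does not follow from Lemma~\ref{exclemma} as you use it. The constant $c$ there depends on the inner radius $r_1$, and the error bound for $\zeta g_{\mathrm{err}}$ with $r_1=\rho/2$ contains a term of order $\epsilon\,|w(\rho/2,-\pi)|$, which need not decay (for the helicoid $w$ is constant). What one does get directly from \eqref{gInTermUeqn} is $h_2=\theta+O(1)$ and, via \eqref{GradDecay}, $h_1\to\infty$ as $\rho\to\infty$; this is enough for properness but not for the clean $o(1)$ preimage count you want.
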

\begin{proof} Let $\gamma>\gamma_0$ be a regular value
of $h_1$.  Such $\gamma$ exist by Sard's theorem and indeed form a
dense subset of $(\gamma_0,\infty)$.  We claim that the set of smooth
curves $Z=h_1^{-1}(\gamma)$ has exactly one component.
 Note that $Z$ is non-empty by \eqref{GradDecay} and \eqref{gradx3h}.  By
construction, $Z$ is a subset of $\mathcal{R}_S$ and, up to choosing
an orientation, $Z$ lies in the graph of $u^1$, which we will
henceforth denote as $u$. Let us parametrize one of the
components of $Z$ by $\phi(t)$, non-compact by the maximum
principle, and write $\phi(t)=\Phi_u (\rho(t),\theta(t))$.  Note, $h_2(\phi(t))$ is monotone in $t$ by the Cauchy-Riemann equations and because $\gamma_0$ is a regular value of $h_1$ and so $\nabla_\Sigma h_1(\phi(t))\neq 0$.

At the point $\Phi_u(\rho,\theta)$ we compute:
\begin{equation} \label{gInTermUeqn}
g(\rho,\theta)=-\frac{1}{\sqrt{1+|\nabla u|^2}-1}
\left(u_\rho(\rho,\theta)+i\frac{u_\theta
(\rho,\theta)}{\rho}\right)e^{i\theta}.
\end{equation}
Since $u_\theta(\rho(t),\theta(t))>0$, there exists a function
$\tilde{\theta}(t)$ with $\pi <\tilde{\theta}(t)< 2\pi$ such that
\begin{equation}
|\nabla u|(\rho(t),\theta(t)) e^{i\tilde{\theta}(t)}=-u_\rho
 (\rho(t),\theta(t))
-i\frac{u_\theta(\rho(t),\theta(t))}{\rho(t)} .
\end{equation}
Thus, $h_2(\phi(t))=\theta(t)+\tilde{\theta}(t)$.

We now claim that, up to replacing $\phi(t)$ by $\phi(-t)$,
$\lim_{t\to \pm \infty} h_2 (\phi(t))=\pm\infty$. Without loss of
generality, we need only rule out $\lim_{t\to \infty}
h_2(\phi(t))=R<\infty$.  Suppose this occurred, then by the
monotonicity of $h_2$, $h_2(\phi(t))<R$.  The formula for
$h_2(\phi(t))$ implies that, for $t$ large, $\phi(t)$ lies in one
sheet.   The decay estimates \eqref{GradDecay} together with
\eqref{gradx3h} imply $\rho(t)$ cannot became arbitrarily large and
so the positive end of $\phi$ lies in a compact set.  Thus, there is
a sequence of points $p_j = \phi(t_j)$, with $t_j$ monotonically
increasing to $\infty$, so $p_j\to p_\infty \in \Sigma$.  By the
continuity of $h_1$,
 $p_\infty \in Z$, and since $h_2(p_j)$ is monotone increasing with
 supremum $R$, $h_2(p_\infty)=R$,
and so $p_\infty$ is not in $\phi$.  However, $p_\infty\in Z$
implies
 $h'(p_\infty) \neq 0$ and so $h$ restricted to a small neighborhood of
$p_\infty$ is a diffeomorphism onto its image, contradicting
$\phi$ coming arbitrarily close to $p_\infty$.

Thus, the formula for $h_2(\phi(t))$, its monotonicity, and the
bound on $\tilde{\theta}$ show that $\theta(t)$ must extend from
$-\infty$ to $\infty$.  We now conclude that there are at most a
finite number of components of $Z$.  Namely, since $\theta(t)$ runs
from $-\infty$ to $\infty$ we see that every component of $Z$ must
meet the ray $\eta(\rho)=\Phi_u(\rho,0), \rho \in (\rho_0,\infty)$,
where $\rho_0$ is the smallest value so $\eta\subset \mathcal{R}_S$.
Again, the gradient decay of \eqref{GradDecay}  says that the set of
intersections of $Z$ with $\eta$ lies in a compact set, and so
consists of a finite number of points.  Now, suppose there was more
than one component of $Z$.  Looking at the intersection of $Z$ with
$\eta$, we order these components innermost to outermost;
parametrize the innermost curve by $\phi_1(t)$ and the outermost by
$\phi_2(t)$. Pick $\tau$ a regular value for $h_2$, and parametrize
the component of $h_2^{-1}(\tau)$ that meets $\phi_1$ by
$\sigma(t)$, writing $\sigma(t)=\Phi_u(\rho(t),\theta(t))$ in
$\mathcal{R}_S$. From the formula for $h_2$, $|\theta(t)-\tau|\leq
2\pi$. Again, $\sigma(t)$ cannot have an end in a compact set, so
$\rho(t) \to \infty$. Hence, $\sigma$ must also intersect $\phi_2$
contradicting the monotonicity of $h_1$ on $\sigma$.

Thus, when $\gamma>\gamma_0$ is a regular value of $h_1$,
$h_1^{-1}(\gamma)$ is a single smooth curve.  We claim this implies
that all $\gamma>\gamma_0$ are regular values.  Suppose
$\gamma'>\gamma_0$ were a critical value of $h_1$. However, as $h_1$
is harmonic, the proof of Rado's theorem implies for
$\gamma>\gamma_0$, a regular value of $h_1$ near $\gamma'$,
$h_1^{-1}(\gamma)$ would have at least two components.  Thus,
$h:\Omega_+\to \set{z:\re z\geq 2\gamma_0}$ is a conformal
diffeomorphism that maps boundaries onto boundaries, immediately
implying that $h$ is proper on $\Omega_+$.  An identical argument applies to
 $\Omega_-$.
\end{proof}
By looking at $z$, which already has well understood behavior away
from $\infty$, we see that $\Sigma$ is conformal to $\mathbb{C}$
with $z$ providing an identification.
\begin{prop} \label{ConfDiffprop} The map $h\circ z^{-1}:\mathbb{C}\to
 \mathbb{C}$ is linear.
\end{prop}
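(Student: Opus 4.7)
The plan is to show that $F := h\circ z^{-1}$ is an injective entire self-map of $\mathbb{C}$; the classical fact that such a map must be affine will then give linearity.

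By Proposition~\ref{hpoly}, $F$ restricts to a proper conformal diffeomorphism from $z(\Omega_\pm)\subset\mathbb{C}$ onto the closed half-planes $\{w:\pm\re w\geq 2\gamma_0\}$, so $F$ is already injective on $F^{-1}(\{|\re w|\geq 2\gamma_0\})$ with non-vanishing derivative. To extend injectivity to the ``middle strip'' preimage, I would adapt the Rado-type argument of Lemma~\ref{RadoPrepLem} and Proposition~\ref{SecondMainThm}, applied now to the harmonic function $u := \re F$ on $\mathbb{C}$. If $u$ had a critical point $p$ with $u(p)=c$, necessarily $|c|<2\gamma_0$, then the level set $u^{-1}(c)$ would branch at $p$ into at least four arcs, each escaping to infinity and thus meeting every sufficiently large circle $\partial B_R\subset\mathbb{C}$ in at least four points.

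To obtain a contradiction, I would analyze $u^{-1}(c)=z(h_1^{-1}(c))$ at infinity using the asymptotic description of $\Sigma$ from Theorem~\ref{FirstMainThm}: on the two oppositely oriented spiraling multivalued graphs $\Sigma^1,\Sigma^2$, the explicit formula \eqref{gInTermUeqn} combined with the decay \eqref{GradDecay} forces $h_1\to+\infty$ along one spiral and $h_1\to-\infty$ along the other as one spirals outward. Consequently, for any fixed $c$ with $|c|<2\gamma_0$, $h_1^{-1}(c)$ has exactly two ends at infinity in $\Sigma$, one in each of $\Sigma^1,\Sigma^2$, and so $u^{-1}(c)$ meets a sufficiently large circle in $\mathbb{C}$ in precisely two points -- contradicting the four or more intersections forced by a critical point. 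Hence $u$ has no critical points and every level set of $u$ is a single smooth proper curve.

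Granted this, the Cauchy--Riemann equations imply $\im F$ is strictly monotone along each level curve of $\re F$, so $F$ is globally injective, and thus affine. The main obstacle is the infinity count in the middle step: translating the spiral-plus-axis decomposition of $\Sigma$ into the conformal $z$-plane and confirming the ``exactly two intersection points'' property on large circles, while properly handling the unbounded axis region $\mathcal{R}_A$ where the spiral structure breaks down and on which $h_1$ stays bounded.
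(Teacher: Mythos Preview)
Your proposal has a fundamental gap: you treat $F=h\circ z^{-1}$ as an entire function on $\mathbb{C}$, but at this stage we only know that $z:\Sigma\to\mathbb{C}$ is an injective holomorphic map, not that it is surjective. Establishing $z(\Sigma)=\mathbb{C}$ is in fact the bulk of the paper's proof of this proposition. The paper argues that each level curve of $x_3$ has one end in $\Omega_+$ and the other in $\Omega_-$ (using the one-sided curvature estimate and \eqref{GradDecay} to get radial decay of $|\nabla u|$, hence $|h_1|\to\infty$, along level curves), so $x_3$ ranges over all of $\Real$ along $\partial\Omega_+$; a Schwarz reflection / Liouville argument then forces $z(\Omega_+)$ to fill out an entire ``half'' of $\mathbb{C}$, which gives $|x_3^*|\to\infty$ along every level curve of $x_3$ and hence surjectivity of $z$. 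Only after this does the paper conclude, via Picard's theorem together with the injectivity on $\Omega_\pm$ from Proposition~\ref{hpoly}, that $F$ is a polynomial and then linear. Your ``injective entire $\Rightarrow$ affine'' strategy presupposes exactly the surjectivity step you have skipped.

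There is also a secondary error in your level-set analysis. You assert that for $|c|<2\gamma_0$ the curve $h_1^{-1}(c)$ has ``exactly two ends at infinity in $\Sigma$, one in each of $\Sigma^1,\Sigma^2$.'' But your own observation that $|h_1|\to\infty$ as one spirals radially outward in each $\Sigma^i$ means the level set \emph{cannot} escape radially in either spiral; compare Proposition~\ref{hpoly}, where for $|c|\geq 2\gamma_0$ both ends of $h_1^{-1}(c)$ lie in the \emph{same} spiral with $\theta(t)\to\pm\infty$. For $|c|\leq\gamma_0$ the ends of $h_1^{-1}(c)$ must instead lie in the unbounded axis region $\mathcal{R}_A$, escaping with $x_3\to\pm\infty$. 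Your Rado-in-the-strip argument would therefore require controlling these axial ends and then transporting the ``two intersection points on large circles'' count into the $z$-plane---which again requires understanding the behavior of $z$ at infinity, i.e., precisely the surjectivity you have not proved. The obstacle you flag in your last paragraph is thus not a detail but the entire content of the proposition.
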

\begin{proof}
We first show that $z$ is a conformal diffeomorphism between
$\Sigma$
 and $\mathbb{C}$, i.e. $z$ is onto. This will follow if we show $x_3^*$
 goes from $-\infty$ to
$\infty$ on the level sets of $x_3$.  The key fact is: each level
set of $x_3$ has one end in $\Omega_+$ and the other in $\Omega_-$.
This is an immediate consequence of the radial decay along level
curves of $x_3$, which follows from the one-sided curvature
estimate. Indeed, for any $\epsilon>0$ there is a $\delta_\epsilon$
so that if $\mathbf{C}_{\delta_\epsilon}$ contains a weak 2-valued
$\delta_\epsilon$ sheet on scale $1$ then all components of $\Sigma
\cap \left(\mathbf{C}_{\delta_\epsilon}\backslash B_2\right)$ can be
expressed as graphs with gradient bounded by $\epsilon$.  By the
faster than linear gradient decay of \eqref{GradDecay} and a
rescaling, such a weak sheet can always be found. Every
level set of $x_3$ must lie in this set, and so far enough out, each point of
the level set lies on a graph with gradient bounded by
$\epsilon$.  However, \eqref{gInTermUeqn} implies that at such points $|h_1|\geq -1/4 \ln \epsilon$, forcing $x_3$ to run from
$-\infty$ to $\infty$ along the curve $\partial \Omega_+$. Thus, 
$z(\partial \Omega_+)$ splits $\mathbb{C}$ into two components with
only one, $V$, meeting $z(\Omega_+)=U$. After conformally
straightening the boundary of $V$ (using the Riemann mapping
theorem) and precomposing with $h|_{\Omega_+}^{-1}$, we obtain a map
from a closed half-space \emph{into} a closed half-space with the
boundary mapped \emph{into} the boundary. We claim
 that this map is necessarily \emph{onto}, that is $U$ equals $\bar{V}$. Suppose
 it was not onto, then a
Schwarz reflection would give a holomorphic map from $\mathbb{C}$
into
 a simply connected proper subset of $\mathbb{C}$.  Because the latter
 is conformally a disk, Liouville's theorem would imply this map was
 constant, a contradiction.  As a consequence,
 if $p\to \infty$ in $\Omega_+$ then $z(p)\to \infty$, with the same
 true in
 $\Omega_-$.
Thus, along each level set of $x_3$, $|x_3^*(p)|\to \infty$ and so
$z$ is onto.  Then, by the level set analysis in the proof of Proposition
 \ref{hpoly} and Picard's theorem, $h\circ z^{-1}$ is a polynomial and is indeed
 linear.
\end{proof}

\subsection{Concluding Uniqueness}
After a translation in $\Real^3$ and a rebasing of $x_3^*$,
$h(p)=\alpha z(p)$ for some $p \in \Sigma$.  As $dz$ is the height
differential, the Weierstrass representation gives, on the
curve parameterized by $z=0+it$, that
 \begin{equation*}
x_1(it)=|\alpha|^{-2}\left(\alpha_2 \sinh(\alpha_2 t) \sin (\alpha_1 t)-\alpha_1
 \cosh(\alpha_2 t)\cos(\alpha_1 t)\right) 
\end{equation*} and
\begin{equation*}
 x_2(it)=|\alpha|^{-2}\left(\alpha_2\sinh(\alpha_2 t)\cos (\alpha_1 t)+\alpha_1\cosh(\alpha_2
 t)\sin(\alpha_1 t)\right)
\end{equation*}
where $\alpha=\alpha_1+i\alpha_2$.  By
 inspection, this curve is only embedded when $\alpha_1=0$, i.e. if
 $\alpha=i\alpha_2$.  The factor $\alpha_2$ corresponds to a homothetic
rescaling and so $\Sigma$ is the helicoid.

\section{Local Result}
Consider two oriented surfaces $\Sigma_1, \Sigma_2\subset \Real^3$, so that
$\Sigma_2$ is the graph of $\nu$ over $\Sigma_1$.  Then the map
$\phi:\Sigma_1\to \Sigma_2$ defined as $\phi(x)=x+\nu(x)
\mathbf{n}(x)$ is smooth.  Moreover, if $\nu$ is small in a $C^1$
sense, $\phi$ is an ``almost isometry".
\begin{lem}
\label{GraphLem} Let $\Sigma_2$ be the graph of $\nu$ over
$\Sigma_1$, with $\Sigma_1\subset B_{R}$, $\partial \Sigma_1
\subset
\partial B_{R}$ and $|A_{\Sigma_1}|\leq 1$.  Then, for $\epsilon$
 sufficiently small,
$|\nu|+ |\nabla_{\Sigma_1} \nu|\leq \epsilon$ implies $\phi$
 is a diffeomorphism with $1-\epsilon\leq ||d\phi||\leq 1+\epsilon$.
\end{lem}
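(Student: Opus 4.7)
The plan is direct: differentiate $\phi(x) = x + \nu(x)\mathbf{n}(x)$, decompose the result into tangential and normal parts with respect to $\Sigma_1$, and read off the norm bound. For $v \in T_x\Sigma_1$, the product rule, combined with the fact that $\mathbf{n}$ has unit length (so $d\mathbf{n}(v) \in T_x\Sigma_1$ equals the Weingarten image of $v$), gives
\[
d\phi(v) = \bigl(v + \nu(x)\, d\mathbf{n}(v)\bigr) + (d\nu)(v)\,\mathbf{n}(x),
\]
where the first summand is tangent to $\Sigma_1$ and the second is normal. The hypothesis $|A_{\Sigma_1}|\le 1$ gives $|d\mathbf{n}(v)| \le |v|$, which controls the tangential perturbation in terms of $|\nu|$.

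Orthogonality then yields the Pythagorean identity
\[
|d\phi(v)|^2 = |v + \nu\, d\mathbf{n}(v)|^2 + |(d\nu)(v)|^2.
\]
For $|v|=1$, the triangle inequality together with $|\nu|\le\epsilon$ produces $1-\epsilon \le |v+\nu\,d\mathbf{n}(v)| \le 1+\epsilon$, while $|(d\nu)(v)|\le |\nabla_{\Sigma_1}\nu| \le \epsilon$. A short expansion then gives $1-\epsilon \le |d\phi(v)| \le \sqrt{(1+\epsilon)^2+\epsilon^2} \le 1+C\epsilon$ for a universal $C$. Replacing the hypothesis $\epsilon$ by $\epsilon/C$ at the outset absorbs the constant and produces the asserted two-sided bound on $\|d\phi\|$.

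To upgrade this to the diffeomorphism statement, I observe that by definition of ``$\Sigma_2$ is the graph of $\nu$ over $\Sigma_1$'', every point of $\Sigma_2$ is uniquely of the form $x+\nu(x)\mathbf{n}(x)$ for some $x\in\Sigma_1$, so $\phi$ is already a bijection onto $\Sigma_2$. The uniform lower bound $|d\phi|\ge 1-\epsilon>0$ makes $d\phi$ everywhere invertible, and the inverse function theorem promotes $\phi$ to a local, hence (by bijectivity) global, smooth diffeomorphism. I do not anticipate a serious obstacle; the argument is essentially a one-line calculation. The only bookkeeping point is the constant in the upper bound, which is handled by the initial rescaling of $\epsilon$ described above.
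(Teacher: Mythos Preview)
Your argument is correct and follows essentially the same route as the paper: both differentiate $\phi$, split the result as $d\phi_p(e_i)=e_i+\langle\nabla_{\Sigma_1}\nu,e_i\rangle\,\mathbf{n}+\nu\,D\mathbf{n}_p(e_i)$, and bound the perturbation using $|A_{\Sigma_1}|\le 1$ together with the hypothesis $|\nu|+|\nabla_{\Sigma_1}\nu|\le\epsilon$. One small remark: where you invoke ``by definition of graph'' to get injectivity of $\phi$, the paper instead notes that injectivity holds for $\epsilon$ sufficiently small \emph{depending on $\Sigma_1$} (a tubular-neighborhood radius); your phrasing is fine if one reads ``graph'' as including unique representation, but the paper's version is the safer formulation since two normal lines from distinct points of $\Sigma_1$ could in principle intersect.
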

\begin{proof}
For $\epsilon$ sufficiently small (depending on $\Sigma_1$),
$\phi$ is injective. Working in $\Real^3$, given orthonormal
vectors $e_1,e_2\in T_p \Sigma_1$ we compute:
\begin{equation}
d\phi_p (e_i)=e_i+\langle \nabla_{\Sigma_1} \nu (p), e_i\rangle
\mathbf{n}(p)+\nu(p) D\mathbf{n}_p (e_i).
\end{equation}
The last two terms are together controlled by $\epsilon$. Hence,
$1-\epsilon<|d\phi_p (e_i)|<1+\epsilon$.
\end{proof}

\begin{proof} (of Theorem \ref{FifthMainThm})
By rescaling we may assume that $s=1$. We proceed by
contradiction.
  Suppose no such $R'$ existed for fixed $\epsilon, R$.  That is, there
 exists a sequence of counter-examples; $\Sigma'_i\in \mathcal{E}(1,0, R_i)$,  $(0,1)$ a $C$
blow-up pair of each $\Sigma_i'$ and $R\leq R_i\to \infty$, but
$\Sigma_i$, the component of $B_R\cap \Sigma_i'$ containing zero, is
not close to a helicoid.

By definition, $|A_{\Sigma'_i} (0)|^2=C>0$ for all $\Sigma_i'$ and
so the lamination theory of Colding and Minicozzi implies that a
subsequence of the $\Sigma'_i$ converge smoothly and with
multiplicity one to $\Sigma_\infty$, a complete embedded minimal
disk.  Namely, in any ball centered at $0$ the curvature of
$\Sigma_i$ is uniformly bounded by Lemma 2.26 of \cite{CY}.
Furthermore, the chord-arc bounds of \cite{CY} give uniform area
bounds and so by standard compactness arguments one has smooth
convergence (possibly with multiplicity) to $\Sigma_\infty$.  If the
multiplicity of the convergence is greater than 1, then one can
construct a positive solution to the Jacobi equation (see Appendix B
of \cite{CM5}). That implies $\Sigma_\infty$ is stable, and thus a
plane by Schoen's extension of the Bernstein theorem \cite{SchoenStab}, contradicting the curvature at $0$.
Thus, as $\Sigma_\infty \in \mathcal{E}(1,0)$ is non-flat, Theorem
\ref{FourthMainThm} implies it is a helicoid.  We may,
by rescaling, assume $\Sigma_\infty$ has curvature $1$ along its
axis.

For any fixed $R'$ a subsequence of $\Sigma'_{i}\cap B_{R'}$
converges to $\Sigma_\infty\cap B_{R'}$ in the smooth topology.
And so, for any $\epsilon$, with $i$ sufficiently large, we find a
smooth $\nu_i$ defined on a subset of $\Sigma_\infty$ so that
$|\nu_i|+|\nabla_{\Sigma_\infty} \nu_i|<\epsilon$ and the graph of
$\nu_i$ is $\Sigma'_i\cap B_{R'}$. Choosing $R'$ large enough to
ensure minimizing geodesics between points in $\Sigma_i$ lie in
 $\Sigma_i'\cap B_{R'}$ (using the
chord-arc bounds of \cite{CY}), Lemma \ref{GraphLem} gives the
desired contradiction.
\end{proof}
\appendix
\section{Structural Results of Colding and
Minicozzi}\label{structureapp}

For the convenience of the reader, we gather here some of the results of Colding and Minicozzi on the structure of embedded minimal surfaces.   The foundation of their work is their
description of embedded minimal disks in \cite{CM1,CM2,CM3,CM4},
which underpins their results for more general topologies in
\cite{CM5}. Roughly speaking, they show that embedded minimal
disks (with boundary lying on the boundary of a ball) fall into precisely two classes. On the one hand, if the curvature of such a surface,
$\Sigma$, is everywhere small, then the surface is nearly flat and hence
modeled on a plane (i.e. is a single-valued graph). On the other
hand, when $\Sigma$ has (far from the boundary) a point with large curvature then
it is modeled on a helicoid. That is, in a smaller ball $\Sigma$
consists of two multivalued graphs that spiral together and that
are glued along an ``axis" of large curvature.  In proving such a
qualitative description, Colding and Minicozzi show many
quantitative results about the behavior of the sheets and of the
axis.  It is these later results that we use and describe in more
detail below.

We point out that their work is interior theory, that is it holds far from the boundary.  In our application, the minimal disks are complete and without boundary which simplifies things
somewhat and so the reader may wish to assume this and ignore the conditions regarding the boundary.
We have also, where needed, changed notation to that used in the present paper.
\subsection{Existence of Multivalued Graphs}
Definition \ref{defbup} gives a condition that specifies the
points and scales of an embedded minimal disk of large curvature
that are of particular interest in the theory -- recall we refer
to these pairs as blow-up pairs. These are points of almost
maximal curvature in a ball with scale which is inversely
proportional to the curvature at the point. A good example
of such blow-up pairs is provided by points on the axis of the
helicoid, as there the scale is proportional to the separation
between the sheets of the helicoid. This is the model
behavior for any embedded minimal disk containing a blow-up pair
-- i.e. in any embedded minimal disk, near a blow-up pair one has
a small multivalued graph forming on the scale of the pair.
 \begin{thm} \label{CM204Thm}(Theorem 0.4 of \cite{CM2})
 Given $N, \omega>1$ and $\epsilon> 0$, there exists $C = C (N, \omega,\epsilon)> 0$ so: Let
$0 \in \Sigma\in \mathcal{E}(1,0,R)$. If $(0,r_0)$ is a $C$
blow-up pair for $0 < r_0 < R$, then there exist $\bar{R} < r_0
/\omega$ and (after a rotation) an $N$-valued graph $\Sigma_g
\subset\Sigma$ over $D_{\omega\bar{R} }\backslash D_{\bar{R}}$
with gradient $\leq \epsilon$, and $dist_\Sigma(0, \Sigma_g ) \leq
\bar{R}$.
\end{thm}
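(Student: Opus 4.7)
The plan is to argue by contradiction and compactness, in the spirit of the blow-up analyses that recur throughout \cite{CM1, CM2, CM3, CM4}. Suppose the theorem fails for some fixed $N, \omega, \epsilon$. Then there exists a sequence $\Sigma_n \in \mathcal{E}(1, 0, R_n)$ together with blow-up pairs $(0, r_n)$ whose blow-up constants $C_n \to \infty$, yet no $N$-valued subgraph of the required form lives in any $\Sigma_n$. After rescaling so that $r_n = 1$, we have $|A|^2(0) = C_n^2/4 \to \infty$ while $|A|^2 \leq C_n^2$ throughout $B_1(0) \cap \Sigma_n$; the task is then to extract an $N$-valued graph on some definite scale $\bar R < 1/\omega$ already in the original $\Sigma_n$.

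My first step would be to pass to a limit on the natural scale. I would point-pick (Ecker--Huisken style) inside $B_1(0) \cap \Sigma_n$ to select $p_n$ realizing near-maximal $|A|$ on a shrinking parabolic scale, and then rescale by $|A|(p_n)$. Curvature-compactness together with the chord-arc area bounds available for embedded minimal disks (precursors of those in \cite{CY}) yields, after passing to a subsequence, smooth convergence to a complete embedded minimal surface $\Sigma_\infty$ with bounded curvature and $|A|(0) \neq 0$. A Jacobi-field construction (compare Appendix B of \cite{CM5}) forces the multiplicity of convergence to be one: otherwise $\Sigma_\infty$ would carry a positive Jacobi field, hence be stable and therefore flat by Bernstein, contradicting the curvature at the distinguished point.

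The heart of the argument is extracting multivalued-graph structure from $\Sigma_\infty$ and then transferring it back to $\Sigma_n$ on the pre-limit scale. Near the distinguished point, the existence of pairs of nearby points in $\Sigma_\infty$ with nearly-parallel tangent planes — forced by integrating the bounded curvature against the large $|A|$ — produces a small $2$-valued graph. I would then propagate this outward dyadically: on each sheet the height $u$ satisfies \eqref{MinSurfEq}, the separation $w$ solves a uniformly elliptic second order equation of the form appearing just after \eqref{MainInEq21}, and $w > 0$ by embeddedness, so a Harnack inequality together with standard elliptic estimates lets me continue the graph out while preserving the gradient bound. Iterating this continuation across dyadic annuli, and at each stage adding one sheet at a time via nearby curvature blow-up (since embeddedness plus the minimal surface equation prevent the sheets from terminating in the interior), I stack up to $N$ sheets spanning some annulus $D_{\omega \bar R} \setminus D_{\bar R}$. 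Smooth multiplicity-one convergence then transplants this structure back to $\Sigma_n$ for all $n$ large, producing the required graph and contradicting the counterexample hypothesis.

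I expect the main obstacle to be the propagation-and-stacking step, which must simultaneously guarantee (i) that sheets never merge or collapse — that is, $w$ remains bounded below by a quantitative constant depending only on $N, \omega, \epsilon$ as one advances outward — and (ii) that the gradient can be kept below $\epsilon$ while $N$ sheets accumulate. This is where the delicate Harnack-plus-Simons analysis of $w$ enters, and where the dependence of $C$ on all three of $N, \omega, \epsilon$ really gets fixed. In particular, the quantitative loss per stacking step must be controlled uniformly in $n$, which is why one cannot simply argue in the limit $\Sigma_\infty$ and appeal to smooth convergence without first having a scale-independent construction — exactly the point of the extensive sheet-building machinery of \cite{CM2}.
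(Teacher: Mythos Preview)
The paper does not prove this statement. Theorem \ref{CM204Thm} sits in Appendix \ref{structureapp}, where the authors merely quote it verbatim as Theorem 0.4 of \cite{CM2}; it is one of the Colding--Minicozzi structural results on which the rest of the paper is built, not something the paper establishes.

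On your proposal itself, two issues. First, there is genuine circularity: you invoke the chord-arc bounds of \cite{CY} to get area control for compactness, and the Jacobi-field/stability argument of \cite{CM5} to rule out higher multiplicity. Both of those works rest on the full Colding--Minicozzi disk theory \cite{CM1,CM2,CM3,CM4}, and Theorem \ref{CM204Thm} is the entry point to that theory --- it is precisely how one first produces \emph{any} multivalued graph from a blow-up pair. So you cannot cite \cite{CY} or \cite{CM5} here. Without those, you have no a priori area bound in the blow-up ball, and compactness in the sense you want (smooth, multiplicity one, to an embedded disk) is not available.

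Second, even granting a good limit $\Sigma_\infty$, your step 4 --- locating an $N$-valued graph inside it --- is the entire content of the theorem, just posed on a complete surface instead of a large disk. You correctly identify the ingredients (large total curvature of the Gauss map forces nearby points with nearly-parallel tangent planes, giving a seed $2$-valued graph; then Harnack on $w$ and elliptic estimates propagate it outward and stack sheets), and you correctly flag this as the obstacle. But that is an outline of the \cite{CM2} proof, not a reduction to something easier; the compactness wrapper does not buy you anything, because the scale-invariant estimates you need in $\Sigma_\infty$ are exactly the ones \cite{CM2} proves directly in $\Sigma$.
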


\subsection{Extending the Graphs}
Using the initial small multivalued graph, Colding and Minicozzi
show that it can be extended, as a graph and within the surface
$\Sigma$, nearly all the way to the boundary of $\Sigma$.
\begin{thm}\label{CM103Thm}(Theorem
0.3 of \cite{CM1}) Given $\tau>0$ there exist $N,\Omega,\epsilon>0$
so that the following hold:  Let $\Sigma\in \mathcal{E}(1,0,R_0)$.
If $\Omega r_0<1<R_0/ \Omega$ and $\Sigma$ contains a $N$-valued
graph $\Sigma_g$ over $D_1\backslash D_{r_0}$ with gradient $\leq
\epsilon$ and
$\Sigma_g \subset \mathbf{C}_\epsilon$
then $\Sigma$ contains a $2$-valued graph $\Sigma_d$ over
$D_{R_0/\Omega} \backslash D_{r_0}$ with gradient $\leq \tau$
and $(\Sigma_g)^M\subset\Sigma_d$.
\end{thm}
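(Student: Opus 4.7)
The plan is a continuation argument: let $\bar{r}$ be the supremum of radii $r \in [1, R_0/\Omega]$ for which $\Sigma$ contains a 2-valued graph over $D_r \setminus D_{r_0}$ with gradient at most $\tau$ extending the middle part of $\Sigma_g$. The goal is to show $\bar{r} = R_0/\Omega$. Openness of the set of admissible $r$ is routine: when the gradient is bounded by $\tau$, the minimal surface equation is uniformly elliptic, and standard Schauder estimates together with the implicit function theorem extend the graph onto a slightly larger annulus.

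Closedness is the main content. There are two possible obstructions: the gradient could blow up as $r \to \bar{r}^-$, or the 2-valued graph could collide with another sheet of $\Sigma$. For collisions, I would exploit that $\Sigma_g$ is $N$-valued inside the cone $\mathbf{C}_\epsilon$, so between any two adjacent sheets of $\Sigma_g$ there is a vertically thin slab that is nearly horizontal. The one-sided curvature estimate of \cite{CM4} applies to any component of $\Sigma$ squeezed into such a slab, forcing that component to itself be a graph of small gradient; this simultaneously rules out collisions and produces the desired extending 2-valued graph in the region between sheets. For gradient control, I would invoke \eqref{GradDecay} and the Bers-type tangent-plane-at-infinity statement (Corollary 2.3 of \cite{MMGPD}) together with embeddedness: if the gradient concentrated near $\bar{r}$, one could rescale and pass to a smooth limit using the uniform curvature bound implied by bounded gradient on $[1,\bar{r})$. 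The limit would be a complete non-flat embedded minimal surface containing a large multivalued graph with persistent gradient at a finite scale, contradicting the faster-than-linear decay.

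The hard part will be running these two estimates simultaneously: the collision argument requires small gradient (to apply the one-sided curvature estimate), and the gradient argument requires no collisions (to invoke embeddedness of the ambient $\Sigma$). The device for closing this loop is to exploit the large parameter $N$ and the small parameter $\epsilon$ as a buffer: with many well-separated sheets, a Harnack inequality on the separation function $w$ (which solves a uniformly elliptic equation whenever $u$ satisfies \eqref{MainInEq21}) enforces both small gradient and well-defined sheet structure at each admissible radius, while choosing $\Omega$ large guarantees enough room for these estimates to propagate all the way from scale $1$ out to $R_0/\Omega$.
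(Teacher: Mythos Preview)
This theorem is not proved in the present paper; it is quoted in the appendix as Theorem~0.3 of \cite{CM1}, part of the background Colding--Minicozzi theory that the paper imports without argument. So there is no proof here to compare your proposal against.

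That said, your proposal has a genuine circularity. You invoke the one-sided curvature estimate of \cite{CM4} (Theorem~\ref{oscthm} and Corollary~\ref{osccor}) to control components of $\Sigma$ trapped between sheets of $\Sigma_g$. But in the logical development of the Colding--Minicozzi theory, the one-sided curvature estimate is proved \emph{using} the extension theorem you are trying to establish. The appendix says so explicitly: Section~\ref{oscsec} opens with ``Using the above results, Colding and Minicozzi prove\ldots'', and Theorem~\ref{CM103Thm} is among those results. The chronology of the series matches---Theorem~0.3 is in \cite{CM1}, while the one-sided curvature estimate is the culmination of \cite{CM4}. You cannot assume the latter to derive the former. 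A similar concern applies to your compactness/blow-up step for gradient control: passing to a limit and invoking the later structure theory presupposes exactly the kind of graph extension you are proving.

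The actual argument in \cite{CM1} runs on the tools already available at that stage: curvature estimates for \emph{stable} minimal surfaces (what lies between consecutive sheets is stable, since it sits on one side of a minimal graph), together with direct growth and separation estimates for solutions of the minimal surface equation over annuli. Your continuation skeleton is reasonable in outline, but the closedness step must be powered by stability and elliptic theory for multivalued graphs, not by the one-sided curvature estimate.
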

Here $(\Sigma_g)^M$ indicates the ``middle" 2-valued sheet of
$\Sigma_g$. Combining this with Theorem \ref{CM204Thm}, gives the existence of a multivalued graph near a
blow-up pair that extends almost all the way to the boundary. Namely,
Theorem 0.2 of \cite{CM2}:
\begin{thm}\label{CM202Thm}
 Given $N\in \mathbb{Z}^+, \epsilon>0$, there exist $C_1 , C_2,C_3 > 0$ so: Let $0 \in \Sigma
\in \mathcal{E}(1,0,R)$. If $(0,r_0)$ is a $C_1$ blow-up pair then
there exists (after a rotation) an $N$ -valued graph
$\Sigma_g\subset\Sigma$ over $D_{R/C_3} \backslash D_{2r_0}$ with
gradient $\leq \epsilon$ and $\Sigma\subset\mathbf{C}_\epsilon.$
Moreover, the separation of $\Sigma_g$ over $\partial D_{r_0}$ is
bounded below by $C_2 r_0$.
\end{thm}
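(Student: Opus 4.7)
The plan is to combine Theorem \ref{CM204Thm}, which produces an initial multivalued graph of many sheets and small gradient near a blow-up pair, with Theorem \ref{CM103Thm}, which extends such a graph outward nearly to $\partial B_R$. Given $N$ and $\epsilon>0$, first apply Theorem \ref{CM103Thm} with tolerance $\tau=\epsilon$ to extract constants $N_1, \Omega, \epsilon_1>0$. Then apply Theorem \ref{CM204Thm} with parameters $\omega = 2\Omega$, sheet count $N_0 = N_1 + 2N$, and gradient bound $\min\set{\epsilon_1, \epsilon/2}$ to produce a blow-up constant $C_1$. For any $C_1$ blow-up pair $(0,r_0)$ this yields, after a rotation of $\Real^3$, a scale $\bar R < r_0/\omega$ and an $N_0$-valued graph $\Sigma_g^0 \subset \Sigma$ over $D_{\omega \bar R}\setminus D_{\bar R}$ with gradient $\leq \epsilon_1$; integrating the gradient bound places $\Sigma_g^0$ in the cone $\mathbf{C}_{\epsilon_1}$, fulfilling the hypotheses of Theorem \ref{CM103Thm}.

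Next, apply Theorem \ref{CM103Thm} to the middle $N_1$-valued sub-sheet of $\Sigma_g^0$ to extend a 2-valued middle sheet outward to scale $R/\Omega$ with gradient $\leq \epsilon$. To upgrade this 2-valued extension to an $N$-valued one, reapply Theorem \ref{CM103Thm} to each of $N$ angularly shifted $N_1$-valued sub-sheets of $\Sigma_g^0$ (possible since $N_0 \geq N_1 + N$). Each yields a 2-valued extension on the same large annular scale, and the embeddedness of $\Sigma$ together with the uniqueness of solutions to the minimal surface equation forces these extensions to patch consistently into a single $N$-valued graph $\Sigma_g$ over $D_{R/C_3}\setminus D_{2r_0}$, where $C_3$ absorbs the factors $\Omega$ and $\omega$ relating the various scales.

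For the separation lower bound, the initial graph from Theorem \ref{CM204Thm} has separation on $\partial D_{\bar R}$ of order $\bar R$, which is of order $r_0$ since $\bar R$ and $r_0$ differ only by the fixed factor $\omega$. Heuristically this reflects that near a blow-up pair the inter-sheet distance is of order $|A|^{-1}(0) = C_1^{-1/2} r_0$, as on the helicoid. On the extended graph, the separation $w$ is positive by embeddedness and satisfies a uniformly elliptic linear PDE coming from the flatness condition \eqref{MainInEq21}; the Harnack inequality (e.g. Theorems 9.20 and 9.22 of \cite{GiTr}) then propagates the lower bound from $\partial D_{\bar R}$ out to $\partial D_{2r_0}$, producing the required $C_2$ with $w \geq C_2 r_0$.

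The hardest part is the patching step: ensuring that the $N$ separate 2-valued extensions combine into a single embedded $N$-valued graph. This requires careful bookkeeping of how adjacent sub-sheets of $\Sigma_g^0$ match up with their respective extensions, and relies on embeddedness together with the one-sided curvature estimate (see Appendix \ref{oscsec}) to preclude the extensions crossing or bunching. Once this is settled, the gradient and cone statements follow from Theorem \ref{CM103Thm} directly, and the separation bound is a routine Harnack argument.
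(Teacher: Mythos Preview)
Your overall strategy---use Theorem \ref{CM204Thm} to produce a small initial multivalued graph near the blow-up pair and then Theorem \ref{CM103Thm} to extend it outward---is exactly what the paper indicates: it simply says the result follows by ``combining [Theorem \ref{CM103Thm}] with Theorem \ref{CM204Thm}'' and cites Theorem 0.2 of \cite{CM2}. Your patching of several shifted $N_1$-valued sub-sheets into an $N$-valued extension is a reasonable elaboration of this sketch.

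There is, however, a genuine gap in your treatment of the separation lower bound. You assert that the initial graph from Theorem \ref{CM204Thm} has separation on $\partial D_{\bar R}$ of order $\bar R$, and that $\bar R$ is of order $r_0$ because the two ``differ only by the fixed factor $\omega$.'' Neither claim is supplied by Theorem \ref{CM204Thm} as stated: that theorem gives only the \emph{upper} bound $\bar R < r_0/\omega$, with no lower bound on $\bar R$ and no lower bound on the separation of $\Sigma_g^0$. Without an initial separation lower bound there is nothing for the Harnack inequality to propagate, so your argument as written does not close. (Note also that Harnack alone, applied on an annulus $D_{2r_0}\setminus D_{\bar R}$ whose ratio of radii is not a priori controlled, degrades by a factor depending on that ratio.)

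This is precisely the point the paper flags: the separation bound ``is not explicitly stated in Theorem 0.2 of \cite{CM2} but is proved in Proposition 4.15 of \cite{CM2}.'' That proposition is where the comparison of the inter-sheet distance to the blow-up scale is actually established; your heuristic $|A|^{-1}(0)\sim r_0$ is the right intuition, but turning it into a rigorous lower bound requires that additional argument, not just Theorems \ref{CM204Thm} and \ref{CM103Thm} plus Harnack.
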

Note that the lower bound on the initial separation is not
explicitly stated in Theorem 0.2 of \cite{CM2} but is proved in
Proposition 4.15 of \cite{CM2}.

\subsection{The Second Multivalued Graph} Colding and Minicozzi
show that, ``between the sheets" of $\Sigma_g$, $\Sigma$ consists of
exactly one other multivalued graph. That is we have at least that
part of $\Sigma$ looks like (a few sheets of) a helicoid. Precisely,
one has Theorem I.0.10 of \cite{CM4}:
\begin{thm}\label{CM4I010Thm}
Suppose $0\in\Sigma\in \mathcal{E}(1,0,4R)$ and $\Sigma_1 \subset
\mathbf{C}_1 \cap \Sigma$ is an $(N + 2)$-valued graph of $u_1$ over
$ D_{2R} \backslash D_{r_1}$ with $|\nabla u_1| \leq \epsilon$ and
$N \geq 6$. There exist $C_0>2$ and $\epsilon_0> 0$ so that if $R
\geq C_0 r_1$ and $\epsilon_0 \geq \epsilon$, then $E\cap
\Sigma\backslash\Sigma_1$ is an (oppositely oriented) $N $-valued
graph $\Sigma_2$.
\end{thm}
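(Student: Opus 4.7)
The plan is to produce $\Sigma_2$ slab by slab inside $E$, using the flatness of $\Sigma_1$ and the one-sided curvature estimate to force each piece to be a small-gradient graph, and then to glue the slab pieces into one $N$-valued multivalued graph. Consecutive sheets of $\Sigma_1$ bound a thin, nearly horizontal slab (by $|\nabla u_1|\leq \epsilon$ and the cone condition $\Sigma_1\subset \mathbf{C}_1$), whose vertical thickness over $\partial D_\rho$ is comparable to the separation of $u_1$ and thus controlled by $\epsilon\rho$.

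First I would fix two consecutive sheets $\Sigma_1^{(k)}, \Sigma_1^{(k+1)}$ and analyze any component $T$ of $(\Sigma\cap E)\setminus\Sigma_1$ trapped between them. The two sheets act as embedded minimal barriers, and $T$ cannot meet either by embeddedness of $\Sigma$. Because $T$ is a minimal surface sandwiched in a thin, nearly horizontal slab, the one-sided curvature estimate of \cite{CM4} applies and yields $|A_T|\leq C/\rho$. Combined with the slab geometry, this upgrades to a small-gradient control for $T$ viewed as a graph over an annular polar domain. An intersection count of $\Sigma$ with vertical lines, using the maximum principle against horizontal planes, then shows that exactly one such sheet $T$ exists in each slab: at least one because $\Sigma\setminus\Sigma_1$ is nonempty and connected through the slab by the disk topology of $\Sigma$, and at most one because a second would introduce extra genus or a loop inconsistent with $\Sigma$ being simply connected.

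The orientation of $\Sigma_2$ is forced to be opposite to that of $\Sigma_1$: if both graphs spiraled the same way, then continuing around $\theta$ the sheets of $\Sigma_1$ and $\Sigma_2$ would eventually cross, contradicting embeddedness. Gluing the per-slab pieces along their common boundaries with $\Sigma_1$ produces a single multivalued graph $\Sigma_2$, with the reduction from $N+2$ to $N$ absorbing losses at the two outermost slabs where the one-sided estimate and the gluing may fail. The main obstacle is the graphical control within a single slab: one must rule out components of $\Sigma$ that enter the slab and then turn vertical, or that spiral narrowly. This is precisely what the one-sided curvature estimate provides, but only when the slab is sufficiently thin relative to its horizontal extent -- hence the hypothesis $R\geq C_0 r_1$ with $C_0$ tied to the constants of that estimate, and the requirement $N+2\geq 8$ so that both neighbors of every interior slab are genuine sheets extending across the full scale range. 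A secondary difficulty is verifying that the local slabwise graphs match up along $\Sigma_1$ to form a coherent $N$-valued graph, which uses the consistency of the chosen opposite orientation and the simple connectedness of $\Sigma$.
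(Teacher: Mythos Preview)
The paper does not prove this statement; Theorem~\ref{CM4I010Thm} is Theorem~I.0.10 of \cite{CM4}, quoted without proof in Appendix~\ref{structureapp} as one of Colding and Minicozzi's structural results. So there is no proof in the paper to compare against.

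That said, your plan has a circularity problem. You rely centrally on the one-sided curvature estimate (Theorem~\ref{oscthm} and Corollary~\ref{osccor}) to control the curvature of components trapped between consecutive sheets. But in the logical development of \cite{CM1,CM2,CM3,CM4}---and as this paper's appendix itself indicates---the one-sided curvature estimate is proved \emph{using} the earlier structural results, including Theorem~\ref{CM4I010Thm}. The opening sentence of Section~\ref{oscsec} makes this explicit: ``Using the above results, Colding and Minicozzi prove\ldots''. So invoking the one-sided estimate to establish Theorem~\ref{CM4I010Thm} is circular. The actual argument in \cite{CM4} obtains the needed curvature control from more primitive ingredients: stability-type estimates for a disk trapped in a thin slab between two minimal sheets, together with the catenoid-foliation barrier arguments of \cite{CM1}. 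Your geometric picture---slabwise analysis, uniqueness via intersection counts and disk topology, opposite orientation forced by embeddedness---is broadly correct, but the key analytic input (curvature and gradient bounds in the slab) must come from a source that does not already presuppose the theorem you are trying to prove.
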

Here $E$ is the region between the sheets of $\Sigma_1$ and is the same as \eqref{Edefn}:
\begin{multline} \label{CM4ERegionEqn}
\{(r\cos \theta, r\sin \theta, z): 2r_1<r<R, -2\pi <\theta <0, \\
u_1 (r, \theta-N\pi) < z < u_1 (r, \theta  +N \pi)\}.
\end{multline}
By Theorem \ref{CM4I010Thm}, near a blow-up point there are two
multivalued graphs that spiral together and extend within $\Sigma$
almost all the way to the boundary of $\Sigma$.

\subsection{Finding Blow-up Pairs} The existence of two multivalued
graphs spiraling together allows Colding and Minicozzi to use the
following result from \cite{CM3} in order to show there are
regions of large curvature above and below the original blow-up
pairs. This is Corollary III.3.5 of \cite{CM3}:
\begin{cor}\label{CM3III35Cor} Given $C_1$ there exists $C_2$ so: Let $0\in\Sigma \in \mathcal{E}(1,0,2 C_2 r_0)$.
 Suppose $\Sigma_1,\Sigma_2\subset \Sigma\cap \mathbf{C}_1$ are graphs of $u_i$ satisfying \eqref{MainInEq21} on $\pRect{r_0}{C_2 r_0}{-2\pi}{2\pi}$, $u_1(r_0, 2\pi)<u_2(r_0,0)<u_1(r_0,0)$,
 and $\nu\subset \partial \Sigma_{0,2r_0}$ a curve from $\Sigma_1$ to $\Sigma_2$. (Here, $\Sigma_{0,2r_0}$ denotes the component of $\Sigma \cap B_{2r_0}$ containing $0$.) Let $\Sigma_0$ be the component of $\Sigma_{0,C_2
r_0}\backslash (\Sigma_1\cup \Sigma_2 \cup \nu)$ which does not
contain $\Sigma_{0,r_0}$.  Then
\begin{equation}
\sup_{x\in \Sigma_0\backslash B_{4r_0}} |x|^2 |A|^2 (x)\geq
4C_1^2.
\end{equation}
\end{cor}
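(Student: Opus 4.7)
The plan is to argue by contradiction. Suppose the desired conclusion fails: for every candidate $C_2$, one can find a configuration with $\sup_{x \in \Sigma_0 \setminus B_{4r_0}} |x|^2 |A|^2(x) < 4C_1^2$. This gives the pointwise bound $|A|(x) \leq 2 C_1 |x|^{-1}$ on $\Sigma_0 \setminus B_{4 r_0}$, so on balls of radius comparable to $|x|/C_1$ each component of $\Sigma_0$ meeting $x$ is graphical over its tangent plane with small gradient. Combined with the fact that $\Sigma_1, \Sigma_2 \subset \mathbf{C}_1$ satisfy \eqref{MainInEq21} (and hence are nearly flat), standard elliptic estimates show that the tangent planes to $\Sigma_0$ in this annular region are nearly horizontal, so $\Sigma_0 \cap (\mathbf{C}_1 \setminus B_{4r_0})$ is in fact a union of multivalued graphs over the $x_1 x_2$-plane with small gradient.

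Next, I would exploit the barrier role played by $\Sigma_1 \cup \Sigma_2 \cup \nu$. The curve $\nu$ connects $\Sigma_1$ to $\Sigma_2$ inside $B_{2r_0}$, while the separation hypothesis $u_1(r_0, 2\pi) < u_2(r_0, 0) < u_1(r_0, 0)$ says that one sheet of $\Sigma_2$ already lies between two consecutive sheets of $\Sigma_1$. Since $\Sigma_0$ is defined as the component of $\Sigma_{0, C_2 r_0} \setminus (\Sigma_1 \cup \Sigma_2 \cup \nu)$ \emph{not} containing $\Sigma_{0, r_0}$, each of its graphical pieces must lie strictly between some consecutive pair of sheets of $\Sigma_1 \cup \Sigma_2$, or else outside $\mathbf{C}_1$ — the latter ruled out by standard maximum-principle comparison with planes, using that $\Sigma_0 \subset B_{C_2 r_0}$.

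Having trapped each graphical piece of $\Sigma_0$ in such a slab, I would apply Theorem \ref{CM103Thm} to extend it inward as a $2$-valued graph down to a small multiple of $r_0$, provided $C_2$ is large enough. But between two consecutive sheets of $\Sigma_1$ the only embedded sheet of $\Sigma_2$ is already accounted for by hypothesis, leaving no room for another embedded multivalued graph. The extended piece therefore must either cross $\Sigma_1$ or $\Sigma_2$, contradicting embeddedness of $\Sigma$, or coincide with a sheet of $\Sigma_1 \cup \Sigma_2$, contradicting the disjointness built into the definition of $\Sigma_0$.

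The main obstacle is the passage from a pointwise curvature bound to genuinely graphical structure compatible with the spiraling of $\Sigma_1$ and $\Sigma_2$: one must verify that the tangent planes of $\Sigma_0$ remain nearly horizontal rather than twisting relative to those of $\Sigma_1, \Sigma_2$, and that the components of $\Sigma_0$ can be labeled as multivalued graphs whose sheets match up with the gaps left by $\Sigma_1 \cup \Sigma_2$. This demands careful use of the Colding–Minicozzi graphical representation for disks of small curvature, together with a delicate topological argument showing that $\nu$, though only specified in $B_{2r_0}$, prevents any extension of $\Sigma_0$ inward past the existing sheet structure.
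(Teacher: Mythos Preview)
The paper does not prove this statement at all: Corollary~\ref{CM3III35Cor} is simply quoted in Appendix~\ref{structureapp} as Corollary~III.3.5 of \cite{CM3}, with no argument given. So there is no ``paper's own proof'' to compare your proposal against.

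That said, a few remarks on your sketch. The broad strategy --- assume the curvature bound fails, deduce graphical structure, and derive a contradiction from the sheet count between $\Sigma_1$ and $\Sigma_2$ --- is indeed the shape of the argument in \cite{CM3}. But your write-up is, as you yourself flag, a plan rather than a proof, and the gaps you identify are real. Two specific points: first, the step where you rule out components of $\Sigma_0$ lying outside $\mathbf{C}_1$ ``by standard maximum-principle comparison with planes'' is not standard and is in fact one of the delicate parts; one needs curvature estimates (closer to the one-sided curvature estimate) to confine $\Sigma_0$ to a cone, not just the maximum principle. Second, invoking Theorem~\ref{CM103Thm} to extend graphs \emph{inward} is the wrong direction --- that theorem extends small multivalued graphs outward toward the boundary, and the inward extension (toward the axis) is a separate and harder issue. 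The actual argument in \cite{CM3} uses stability of the graphical pieces and a more careful accounting of how $\Sigma_0$ sits relative to the slab between $\Sigma_1$ and $\Sigma_2$; your final paragraph correctly anticipates that this bookkeeping is where the work lies, but does not carry it out.
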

By a standard blow-up argument if there is large curvature in a
ball (measured with respect to the scale of the ball) then there
exists a blow-up pair in the ball.  This is Lemma 5.1 of
\cite{CM2}:
 \begin{lem} \label{CM251Lem}
 If $0\in \Sigma\in\mathcal{E}(1,0,r_0)$ and $\sup_{B_{r_0/2}\cap \Sigma} |A|^2 \geq 16 C^2r_0^{-2}$ then there exists a pair $(y, r_1)$ with $y\in \Sigma$ and $r_1<r_0-|y|$ so $(y,r_1)$ is a $C$ blow-up pair.
 \end{lem}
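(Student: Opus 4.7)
The plan is to execute a standard point-picking (blow-up) argument: maximize a carefully chosen continuous quantity so that both the curvature scale and the comparison inequality needed in Definition \ref{defbup} come out of the maximality at one stroke.

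Concretely, I would work with the continuous function $F\colon\Sigma\to[0,\infty)$ defined by
\[
F(x)=|A|(x)\,(r_0-|x|).
\]
Since $\Sigma\in\mathcal{E}(1,0,r_0)$ is properly embedded with $\partial\Sigma\subset\partial B_{r_0}(0)$, the set $\Sigma$ is a compact subset of $\overline{B_{r_0}(0)}$ on which $F$ is continuous and vanishes along $\partial\Sigma$. Therefore $F$ attains its maximum at some interior point $y\in\Sigma$. The hypothesis furnishes a point $p\in B_{r_0/2}\cap\Sigma$ with $|A|(p)\geq 4C/r_0$, so
\[
F(y)\geq F(p)\geq\frac{4C}{r_0}\left(r_0-\frac{r_0}{2}\right)=2C.
\]
In particular $|A|(y)>0$, so I can set $r_1=C/|A|(y)$, which immediately gives $|A|^2(y)=C^2 r_1^{-2}$ as required by the definition.

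Next I would verify the two remaining conditions from the lemma. The lower bound $F(y)\geq 2C$ rearranges to $r_0-|y|\geq 2C/|A|(y)=2r_1$, yielding both $r_1<r_0-|y|$ and the geometric inclusion $B_{r_1}(y)\subset B_{(r_0+|y|)/2}(0)$. For any $x\in B_{r_1}(y)\cap\Sigma$ one then has $r_0-|x|\geq(r_0-|y|)/2$, and the maximality $F(x)\leq F(y)$ translates into
\[
|A|(x)\leq|A|(y)\,\frac{r_0-|y|}{r_0-|x|}\leq 2|A|(y).
\]
Squaring gives $\sup_{B_{r_1}(y)\cap\Sigma}|A|^2\leq 4|A|^2(y)=4C^2 r_1^{-2}$, which is precisely the blow-up pair condition.

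The one place where care is needed is the choice of maximized quantity: the factor $r_0-|x|$ is what guarantees the maximizer is interior (so that a genuine ball sits inside the domain) and simultaneously what converts the pointwise maximality of $F$ at $y$ into a uniform curvature bound on $B_{r_1}(y)\cap\Sigma$. The slack factor of two in the final inequality is exactly what the hypothesis with constant $16C^2$ (rather than $4C^2$) buys; any weaker curvature assumption would not leave room to keep $r_1$ below $(r_0-|y|)/2$. Once this is set up, the rest is bookkeeping.
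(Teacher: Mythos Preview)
Your proposal is correct and is precisely the standard point-picking argument. Note that the paper itself does not supply a proof of this lemma: it is recorded in Appendix~\ref{structureapp} as a citation of Lemma~5.1 of \cite{CM2}, where the same maximization of $|A|(x)(r_0-|x|)$ is carried out.
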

Combining Theorem \ref{CM204Thm}, Corollary \ref{CM3III35Cor}, and
Lemma \ref{CM251Lem}, gives the existence of blow-up pairs above
and below an initial pair.

\subsection{The One-sided Curvature Estimate}\label{oscsec} Using the above results, Colding and Minicozzi prove that an embedded minimal disk
that is close to and on one side of a plane has uniformly bounded
curvature.  The precise statement of the
one-sided curvature estimate is the following:
\begin{thm}\label{oscthm}(Theorem 0.2 \cite{CM4})
There exists $\epsilon>0$ so that if $\Sigma \subset B_{2r_0} \cap
\{x_3>0\} \subset \Real^3$ is an embedded minimal disk with
$\partial \Sigma \subset \partial B_{2r_0}$, then for all
components, $\Sigma'$ of $\Sigma \cap B_{r_0}$ which intersect
$B_{\epsilon r_0}$, we have
\begin{equation}
\sup_{\Sigma'} |A_{\Sigma}|^2 \leq r_0^{-2}.
\end{equation}
\end{thm}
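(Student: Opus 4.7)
The plan is to argue by contradiction, using the structural theorems of Colding and Minicozzi stated earlier in this appendix.

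After rescaling we may take $r_0 = 1$. Supposing the estimate fails, there is a sequence of embedded minimal disks $\Sigma_i \subset B_2 \cap \{x_3 > 0\}$ with $\partial \Sigma_i \subset \partial B_2$ and points $p_i \in \Sigma_i' \subset \Sigma_i \cap B_1$ with $p_i \to 0$ and $|A_{\Sigma_i}|^2(p_i) \to \infty$. First I would invoke Lemma \ref{CM251Lem} to convert each $p_i$ into a $C$-blow-up pair $(y_i, s_i)$, for any prescribed $C$, with $y_i \to 0$ and $s_i \to 0$.

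For $N$ large and $\epsilon$ small, taking $C$ as in Theorem \ref{CM202Thm} and applying that theorem at $(y_i, s_i)$ yields (after rotating each $\Sigma_i$ by some angle $\alpha_i$) an $N$-valued graph $\Sigma_g^i \subset \Sigma_i$ over the annulus $D_{1/C_3} \setminus D_{2 s_i}$ centered at $y_i$, with gradient at most $\epsilon$, contained in $\mathbf{C}_\epsilon(y_i)$, and with separation across $\partial D_{2 s_i}$ bounded below by $C_2 s_i$.

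The contradiction should then be purely geometric: either $\alpha_i$ stays bounded away from $0$, in which case the sheets of $\Sigma_g^i$ (nearly flat in the rotated frame) make a definite angle with $\{x_3 = 0\}$, and since $\Sigma_g^i$ is centered near the origin and extends to radial scale $\sim 1$, at least one sheet must pass through $\{x_3 < 0\}$; or $\alpha_i \to 0$, in which case the Harnack inequality applied to the positive separation function $w$ (which solves a uniformly elliptic linearization of the minimal surface equation) prevents $w$ from decaying too fast in $\rho$, so that the bottommost of the $N$ sheets at radius $\sim 1$ lies a definite amount below $y_i$ and again enters $\{x_3 < 0\}$. Either outcome contradicts $\Sigma_i \subset \{x_3 > 0\}$.

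The main obstacle is the second alternative: making the vertical-extent estimate rigorous when $\alpha_i \to 0$ requires quantitative control of the separation along the full annulus $D_{1/C_3} \setminus D_{2 s_i}$, together with careful translation between the rotated and original frames. I expect the bulk of the work to go into turning the heuristic extent comparison into a rigorous crossing argument, most likely by combining the Harnack inequality for $w$ with the chord-arc bounds of \cite{CY} and a replacement principle to propagate the one-sided hypothesis to the outermost sheets.
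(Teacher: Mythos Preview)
The paper does not prove this theorem. Theorem~\ref{oscthm} is simply quoted from \cite{CM4} (it is Theorem~0.2 there) and collected in Appendix~\ref{structureapp} for reference; no proof is given in the present paper. So there is no ``paper's own proof'' to compare against.

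That said, your outline is broadly the strategy Colding and Minicozzi themselves use in \cite{CM4}: assume large curvature, produce a blow-up pair via Lemma~\ref{CM251Lem}, build an $N$-valued graph near it via Theorems~\ref{CM204Thm} and~\ref{CM103Thm}, and then show that the vertical extent of the stacked sheets forces the surface to cross $\{x_3=0\}$. Your dichotomy on the rotation angle and your identification of the Harnack-type separation estimate as the crux are both on the right track.

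There is, however, one genuine circularity to flag. You propose to use the chord-arc bounds of \cite{CY} in the crossing argument. Those bounds are proved \emph{using} the one-sided curvature estimate (indeed, the entire lamination theory of \cite{CM4}), so invoking them here would be circular. The actual argument in \cite{CM4} avoids this by working directly with the sublinear growth of the separation (Corollary~II.3.7 of \cite{CM1}) and the curvature estimates for stable/$1$-sided surfaces, together with a delicate ``between the sheets'' analysis, rather than appealing to chord-arc control. Similarly, you should be careful that the version of graph-extension you invoke (Theorem~\ref{CM202Thm} as stated) does not itself presuppose one-sided curvature bounds; in \cite{CM1,CM2} it does not, but the packaging in this appendix is tailored to downstream use and some constants implicitly absorb later theory. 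If you want a self-contained argument, you must unwind to the \cite{CM1,CM2,CM3} statements directly.
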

Rescaled catenoids show that the surface must be an embedded
disk.  As a consequence of this estimate, if an embedded
minimal disk, $\Sigma$, contains a two-valued graph lying outside of a cone union a ball, then all components of $\Sigma$ in the complement
of a larger cone (and larger ball) are multivalued graphs.
The nearly flat two-valued graph takes the place of the
plane in Theorem \ref{oscthm}. Precisely,
\begin{cor}\label{osccor}(Corollary I.1.9 in \cite{CM4}) There exists $\delta_0$
so that the following holds:  Let $\Sigma \in
\mathcal{E}(1,0,2R)$.  If $\Sigma$ contains a 2-valued graph
$\Sigma_d \subset \mathbf{C}_{\delta_0}$ over $D_R\backslash
D_{r_0}$ with gradient $\leq \delta_0$, then each component of
\[(\mathbf{C}_{\delta_0} \cup B_{R/2}) \cap \Sigma \backslash
B_{2r_0}\] is a multivalued graph with gradient $\leq 1$.
\end{cor}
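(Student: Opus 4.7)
The plan is to derive this corollary directly from the one-sided curvature estimate (Theorem \ref{oscthm}), with the nearly flat 2-valued graph $\Sigma_d$ playing the role of the plane $\{x_3=0\}$. Because $\Sigma_d \subset \mathbf{C}_{\delta_0}$ has gradient $\leq \delta_0$, on any annulus $D_{2\rho}\setminus D_\rho$ with $2r_0 \leq \rho \leq R/2$ the graph $\Sigma_d$ lies in a slab of width $O(\delta_0 \rho)$ about its tangent plane at the center. Thus, after a translation of size $O(\delta_0\rho)$ and a rotation of size $O(\delta_0)$, the relevant piece of $\Sigma_d$ is arbitrarily close (in $C^1$) to a piece of $\{x_3 = 0\}$.

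The second step is to use embeddedness to establish one-sidedness. Fix a point $p$ in a component $\Sigma'$ of $(\mathbf{C}_{\delta_0} \cup B_{R/2}) \cap \Sigma \setminus B_{2r_0}$ and set $\rho = |p|/4$. Because $\Sigma$ is an embedded disk, $\Sigma'$ cannot cross $\Sigma_d$; as connected components of the complement of $\Sigma_d$ in any tubular neighborhood, $\Sigma' \cap B_\rho(p)$ lies to one definite side of $\Sigma_d$ in that neighborhood. Combining with the preceding step, for $\delta_0$ sufficiently small we may arrange coordinates in which $\Sigma' \cap B_{\rho}(p) \subset \{x_3 > 0\}$ and the hypotheses of Theorem \ref{oscthm} are satisfied at scale $\rho$ (the slight offset between $\Sigma_d$ and the true horizontal plane is absorbed into the constant $\epsilon$ of Theorem \ref{oscthm} by taking $\delta_0$ small).

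Applying Theorem \ref{oscthm} then yields $|A_\Sigma|^2(p) \leq C|p|^{-2}$ for a universal constant $C$. This pointwise curvature bound, together with the fact that the unit normal of the nearby sheet of $\Sigma_d$ is within $O(\delta_0)$ of vertical, lets one integrate principal curvatures (e.g.\ via the standard argument: at any $q \in \Sigma'$, the tangent plane $T_q\Sigma'$ is within $O(1)$ of horizontal, and the curvature bound controls how fast the normal can rotate) to conclude that $\Sigma'$ is locally a graph over the horizontal plane with gradient $\leq 1$. Patching these local graphs via connectedness and embeddedness produces the global multivalued graph structure.

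The main obstacle is calibrating $\delta_0$ so that a \emph{single} universal constant suffices across all scales $\rho \in [2r_0, R/2]$ and all points $p$ in the given region: one needs the $C^1$-closeness between $\Sigma_d$ and its tangent plane at scale $\rho$, the one-sidedness argument, and the final graph-extraction step to all go through with the same $\delta_0$. This is a scaling-invariant comparison, so choosing $\delta_0$ small relative to the $\epsilon$ of Theorem \ref{oscthm} and relative to the constants in the curvature-to-graph argument closes the loop.
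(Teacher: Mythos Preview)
The paper does not give its own proof of this corollary: it is quoted verbatim as Corollary I.1.9 of \cite{CM4} and used as a black box, so there is no ``paper's proof'' to compare against. Your sketch is, in outline, the argument Colding and Minicozzi use in \cite{CM4} to derive this corollary from the one-sided curvature estimate: replace the plane by a sheet of the nearly flat $2$-valued graph $\Sigma_d$, use embeddedness to trap other components on one side, apply Theorem~\ref{oscthm} at scale comparable to $|p|$, and then upgrade the resulting scale-invariant curvature bound to a gradient bound and graphical structure.

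Two technical points you pass over deserve mention, since they are where the work actually lies in \cite{CM4}. First, Theorem~\ref{oscthm} requires an embedded minimal \emph{disk} lying entirely in a half-space with boundary on $\partial B_{2\rho}$; you apply it to ``$\Sigma'\cap B_\rho(p)$'' without verifying this. One needs that each component of $\Sigma\cap B_{2\rho}(p)$ is a disk (this uses the convex hull property and that $\Sigma$ itself is a disk), and that the ball $B_{2\rho}(p)$ is chosen small enough relative to $|p|$ that $\Sigma_d$ genuinely separates it, so the component containing $p$ cannot escape around the axis. Second, your final step --- ``the tangent plane is within $O(1)$ of horizontal, and the curvature bound controls how fast the normal can rotate'' --- needs an anchor: the curvature bound alone does not give a gradient bound without knowing the normal is close to vertical \emph{somewhere}. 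One obtains this either from proximity to the graph $\Sigma_d$ or, as in \cite{CM4}, by arguing that large gradient forces the component to leave $\mathbf{C}_{\delta_0}$. These are genuine (if standard) steps, not mere bookkeeping, but your overall strategy is the correct one.
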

\begin{rem}
Schauder estimates imply that, by shrinking $\delta_0$, one has $|\nabla u|\leq \alpha$.
\end{rem}

\bibliographystyle{amsplain}
\bibliography{FinalDraft}

\end{document}